\tikzstyle arrowstyle=[scale=1]
\tikzstyle directed=[postaction={decorate,decoration={markings, mark=at position 0.75 with {\arrow[arrowstyle]{stealth}}}}]
\tikzstyle redirected=[postaction={decorate,decoration={markings, mark=at position 0.35 with {\arrow[arrowstyle]{stealth}}}}]
\DeclareMathOperator{\supp}{{\rm supp}}
\newtheorem{theorem}{Theorem}[section]
\newtheorem{definition}[theorem]{Definition}
\newtheorem{problem}[theorem]{Problem}
\newtheorem{lemma}[theorem]{Lemma}
\newtheorem{observation}[theorem]{Observation}
\newtheorem{claim}{Claim}
\newtheorem{proposition}[theorem]{Proposition}
\newcommand{\JCTB}{{\it J. Combin. Theory Ser. B}}
\newcommand{\JGT}{{\it J. Graph Theory}}
\newcommand{\DM}{{\it Discrete Math.}}
\newcommand{\ARS}{{\it Ars Combin.}}
\newcommand{\SIAMDM}{{\it SIAM J. Discrete Math.}}
\newcommand{\CJM}{{\it Canad. J. Math.}}
\newcommand{\JLMS}{{\it J. London Math. Soc.}}
\newcommand{\EJC}{{\it European J. Combin.}}
\begin{document}
\title{Flows on signed graphs without long barbells}
\author{You Lu\thanks{Department of Applied Mathematics,
School of Science,
 Northwestern Polytechnical University,
 Xi'an, Shaanxi, 710072, China. Email:~luyou@nwpu.edu.cn}, Rong Luo\thanks{Department of Mathematics, West Virginia University, Morgantown, WV 26506, United States. Email:~rluo@mail.wvu.edu}, Michael Schubert\thanks{Paderborn Center for Advanced Studies, Paderborn University, Paderborn, 33102, Germany. \newline Email:~mischub@upb.de},
  Eckhard Steffen\thanks{Paderborn Center for Advanced Studies and Institute for Mathematics , Paderborn University, Paderborn, 33102, Germany. Email:~es@upb.de} and Cun-Quan Zhang\thanks{Department of Mathematics, West Virginia University, Morgantown, WV 26506, United States. Email:~Cun-quan.Zhang@mail.wvu.edu.   Partially supported
 by an  NSF grant DMS-1700218} }

\date{}

\maketitle

\begin{abstract}
Many basic properties in Tutte's flow theory for unsigned graphs do
not have their counterparts for signed graphs. However, signed graphs without long barbells in many ways behave like unsigned graphs from the point view of flows.
In this paper, we study whether some basic properties in Tutte's flow theory remain valid for this family of signed graphs.
Specifically  let $(G,\sigma)$ be a  flow-admissible signed graph without long barbells. We show that it  admits a nowhere-zero $6$-flow and  that it admits a nowhere-zero modulo $k$-flow if and only if it admits a nowhere-zero integer $k$-flow for each integer $k\geq 3$ and $k \not = 4$.  We also show that each nowhere-zero positive integer $k$-flow of $(G,\sigma)$ can be expressed as the sum of  some $2$-flows. For general graphs, we show that every nowhere-zero $\frac{p}{q}$-flow can be normalized in such a way, that each flow value is a multiple of $\frac{1}{2q}$. As a consequence we prove the equality of the integer flow number and the ceiling of the circular flow number for flow-admissible signed graphs without long barbells.
\end{abstract}

\section{Introduction}

Many basic properties in Tutte's flow theory for unsigned graphs do not have their counterparts for signed graphs.
For instance Tutte's $5$-flow conjecture \cite{Tutte54} states that every flow-admissible unsigned graph has a nowhere-zero 5-flow.
The best approximation so far is that every flow-admissible unsigned graph has a nowhere-zero 6-flow \cite{Seymour1981}.
Flow-admissible signed graphs which do not admit a nowhere-zero 5-flow are known.
Therefore, the 5-flow conjecture is not true for signed graphs in general. But a 6-flow theorem might be true for flow-admissible
signed graphs as conjectured by Bouchet \cite{Bouchet1983}. This conjecture is verified
for several classes of signed graphs (see e.g.~\cite{Kaiser2016, Khelladi87, LuLuoZhang, MaRo15, RSS2018, signed_regular_graphs, {2-negative-edges}}).

It is well known that cycles are fundamental elements in flow theory since it is the
support of 2-flows. For unsigned graphs, every element in the cycle space is the support of a 2-flow.
However, some element (long barbells) in the cycle space of a signed graph is the
support of a 3-flow but not a $2$-flow. Therefore, we may expect signed graphs without long barbells to inherit
some nice properties from unsigned graphs, which naturally motivates the question
whether signed graphs without long barbells have
almost similar properties as unsigned graphs in Tutte's flow theory.
Unfortunately, the answer is no. For example, the unsigned Petersen graph admits a
nowhere-zero 5-flow, while the signed Petersen graph of Figure \ref{fig: signed petersen graph}, which
has no long barbells, admits a nowhere-zero $6$-flow but no nowhere-zero $5$-flow.

\begin{figure}
\begin{center}
\begin{tikzpicture}[scale=0.8]

\path (18:1.2cm) coordinate (1);\draw [fill=black] (1) circle (0.08cm);
\path (18:2.3cm) coordinate (6);\draw [fill=black] (6) circle (0.08cm);
\path (90:1.2cm) coordinate (2);\draw [fill=black] (2) circle (0.08cm);
\path (90:2.3cm) coordinate (7);\draw [fill=black] (7) circle (0.08cm);
\path (162:1.2cm) coordinate (3);\draw [fill=black] (3) circle (0.08cm);
\path (162:2.3cm) coordinate (8);\draw [fill=black] (8) circle (0.08cm);
\path (234:1.2cm) coordinate (4);\draw [fill=black] (4) circle (0.08cm);
\path (234:2.3cm) coordinate (9);\draw [fill=black] (9) circle (0.08cm);
\path (306:1.2cm) coordinate (5);\draw [fill=black] (5) circle (0.08cm);
\path (306:2.3cm) coordinate (10);\draw [fill=black] (10) circle (0.08cm);

 \draw[line width=0.85pt] (1) -- (6)(2) -- (7)(3) -- (8)(4) -- (9)(5) -- (10)(6) -- (7)(7) -- (8)(8) -- (9)(9) -- (10)(10) -- (6);

\draw[dotted, line width=1pt]  (1) -- (3) (3) -- (5) (5) -- (2) (2) -- (4) (4) -- (1);

\end{tikzpicture}
\end{center}
\caption{\small\it A signed Petersen graph admits a  nowhere-zero $6$-flow, but no nowhere-zero $5$-flow. \\ Positive edges are solid and negative edges
are dashed.}
\label{fig: signed petersen graph}
\end{figure}
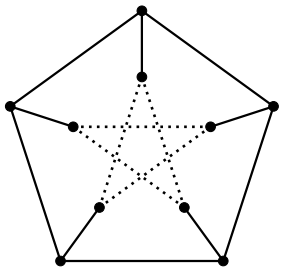

Khelladi verified Bouchet's $6$-flow conjecture for flow-admissible $3$-edge-connected signed graphs without long barbells.

\begin{theorem}{\rm (Khelladi \cite{Khelladi87})}
\label{TH: 3-edge-connected}
Let $(G,\sigma)$ be a flow-admissible $3$-edge-connected signed graph. If $(G,\sigma)$ contains no long barbells, then it admits a nowhere-zero $6$-flow.
\end{theorem}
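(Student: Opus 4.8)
The plan is to bootstrap Khelladi's theorem from Seymour's $6$-flow theorem for ordinary graphs \cite{Seymour1981} by isolating the ``unbalanced part'' of $(G,\sigma)$ and transferring the flow problem to an unsigned graph. Since a $3$-edge-connected graph is connected, I may use that in a connected graph the absence of long barbells is equivalent to the absence of two vertex-disjoint unbalanced cycles: any two vertex-disjoint unbalanced cycles are joined by a shortest path meeting each of them in a single vertex, which together with the two cycles is a long barbell. Hence all unbalanced cycles of $(G,\sigma)$ pairwise intersect. If $(G,\sigma)$ is balanced it can be switched to have all edges positive; being $3$-edge-connected it is then bridgeless, so Seymour's theorem supplies a nowhere-zero $6$-flow, and since switching preserves flows this is also a nowhere-zero $6$-flow of $(G,\sigma)$.

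The substantive case is when $(G,\sigma)$ is unbalanced. Here I would first switch so that the number of negative edges is minimum and then combine the pairwise-intersection property with $3$-edge-connectivity to confine the negative edges to a small ``core''. The cleanest subcase is that there is a balancing vertex $v$, i.e. $(G-v,\sigma)$ is balanced; after a further switching all negative edges are incident with $v$, every unbalanced cycle passes through $v$, and each such cycle uses exactly one negative edge at $v$. The signed flow problem then linearizes: I would split $v$ into $v^{+}$ and $v^{-}$, attach the positive edges at $v$ to $v^{+}$ and the negative ones to $v^{-}$, and reconnect the resulting half-edges in pairs---a pairing whose existence is forced by flow-admissibility---so that each short barbell through $v$ becomes an ordinary cycle of an unsigned graph $G'$. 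Once one verifies that $G'$ inherits bridgelessness from the $3$-edge-connectivity of $G$, a nowhere-zero $6$-flow of $G'$ from Seymour's theorem pulls back to a nowhere-zero $6$-flow of $(G,\sigma)$.

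What remains---and what I expect to be the main obstacle---is that a balancing vertex need not exist: the signed Petersen graph of Figure \ref{fig: signed petersen graph} is $3$-edge-connected, contains no long barbell, yet deleting any single vertex leaves an unbalanced graph, so its unbalanced cycles are genuinely ``tangled''. To cover such cases I would appeal to the structural classification of connected signed graphs with no two vertex-disjoint unbalanced cycles: besides the balancing-vertex case, either all unbalanced cycles meet a bounded blocking set of vertices---split off by the same linearization as above---or the graph contains one of finitely many bounded projective-planar cores, of which the signed Petersen graph is the canonical instance, for which a nowhere-zero $6$-flow is written down explicitly and then extended over the balanced remainder without creating a zero on the boundary edges. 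The reason the bound $6$ rather than something smaller is unavoidable is the parity phenomenon already visible in the introduction: a single unbalanced cycle supports a $\mathbb{Z}_2$-flow but no integer $2$-flow, so the $2$-divisible part of any integer flow must live on balanced cycles and short barbells. This is precisely why one cannot simply lift a modular $\mathbb{Z}_2\times\mathbb{Z}_3$-flow, why the reduction to an unsigned graph (where Seymour's theorem yields a genuine integer $6$-flow) is the right vehicle, and the care needed to keep the reconnection at the core compatible with integer---not merely modular---flows is the crux of the argument.
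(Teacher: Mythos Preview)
This theorem is not proved in the paper: it is quoted from Khelladi~\cite{Khelladi87} and invoked as a black box. The paper's own result, Theorem~\ref{TH: 6-flow}, extends it to all flow-admissible signed graphs without long barbells, and the proof of that theorem \emph{reduces} a minimum counterexample to the $3$-edge-connected case and then appeals to Theorem~\ref{TH: 3-edge-connected}. There is therefore no in-paper argument to compare your proposal against.

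Taken on its own, your proposal is a plan rather than a proof, with a genuine gap exactly where you flag it. The balanced case is fine, and the balancing-vertex subcase is plausible in outline, though ``reconnect the resulting half-edges in pairs---a pairing whose existence is forced by flow-admissibility'' is not well-defined (you have not specified the pairing, justified the parity, or shown that the resulting $G'$ is bridgeless). The decisive problem is the general unbalanced case: you invoke a ``structural classification'' with a ``bounded blocking set'' or ``finitely many bounded projective-planar cores'', but no such theorem is stated, cited, or proved here, and the claim that a hand-built $6$-flow on a core ``extends over the balanced remainder without creating a zero on the boundary edges'' is exactly the sort of extension step that cannot be waved through---compare Lemma~\ref{extend k-flow}, which is already restricted to degree at most~$3$. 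As it stands the proposal does not establish the theorem; to complete it you would need to supply the missing structure theorem together with a verified extension mechanism, or consult Khelladi's original argument in~\cite{Khelladi87}.
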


Lu et al. \cite{LuLuoZhang} also showed that every flow-admissible cubic signed graph without long barbells admits a nowhere-zero $6$-flow.
In Section \ref{M-I-flow} we will verify Bouchet's 6-flow conjecture for the family of signed graphs without long barbells.
We further study the relation between
modulo flows and integer flows on signed graphs.
The equivalency of modulo flow and integer flow is a fundamental result in the theory of flows on unsigned graphs.

\begin{theorem}
\label{TH: Tutte mod-int}
{\rm (Tutte \cite{Tutte1947}, or see Younger \cite{Younger83})}
An unsigned graph admits a nowhere-zero modulo $k$-flow if and only if it admits a nowhere-zero $k$-flow.
\end{theorem}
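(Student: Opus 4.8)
The plan is to prove both implications, one of which is immediate. Fix a reference orientation $D$ of $G$ so that flows can be written as integer- or $\mathbb{Z}_k$-valued functions on the arcs. If $f$ is a nowhere-zero integer $k$-flow, then all its values lie in $\{\pm 1,\dots,\pm(k-1)\}$, none of which vanishes modulo $k$; reducing $f$ modulo $k$ therefore yields a nowhere-zero modulo $k$-flow. Hence the whole content of the theorem lies in the converse, and that is where I would concentrate.

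For the converse I would start from a nowhere-zero modulo $k$-flow $\phi$ and lift it to an integer function. For each arc $e$ pick an integer $f(e)$ with $f(e)\equiv\phi(e)\pmod k$ and $0<|f(e)|<k$; this is possible exactly because $\phi(e)\neq 0$ in $\mathbb{Z}_k$. The resulting $f$ is nowhere zero and bounded, but it need not satisfy Kirchhoff's law. Nonetheless, at every vertex $v$ the net outflow $\partial f(v)$ is a multiple of $k$, and $\sum_v \partial f(v)=0$. Among all such nowhere-zero liftings $f$, I would choose one minimizing $\sum_v |\partial f(v)|$ and then claim that this minimum is $0$, which makes $f$ a genuine nowhere-zero integer $k$-flow.

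To establish the claim, suppose the minimum is positive, so some vertex $x$ has $\partial f(x)>0$. Build an auxiliary digraph $H$ on $V(G)$ recording, for each arc, the single flip available to it (replacing a positive value by that value minus $k$, or a negative value by that value plus $k$), oriented in the direction along which the flip shifts $k$ units of excess. Let $X$ be the set of vertices reachable from $x$ by directed paths in $H$. If some $y\in X$ has $\partial f(y)<0$, then flipping every arc along a directed $x$--$y$ path keeps $f$ a nowhere-zero lifting while strictly decreasing $\sum_v|\partial f(v)|$, contradicting minimality. Otherwise every vertex of $X$ has nonnegative excess, and by the definition of reachability no $H$-arc leaves $X$; this forces every arc of $D$ leaving $X$ to carry a negative value and every arc entering $X$ to carry a positive value, so the net flow out of $X$ satisfies $\sum_{v\in X}\partial f(v)\le 0$. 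This contradicts $\sum_{v\in X}\partial f(v)\ge \partial f(x)>0$.

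I expect the heart of the argument, and the step most prone to error, to be the cut computation in the second case: getting the sign bookkeeping right for the four types of boundary arcs (entering versus leaving $X$, positive versus negative value), so that the reachability definition of $X$ exactly forces the net outflow to be nonpositive, in contradiction with the surplus vertex $x$ lying inside $X$. Once both cases are closed, minimality forces $\partial f\equiv 0$, so $f$ is the desired nowhere-zero integer $k$-flow and the converse follows.
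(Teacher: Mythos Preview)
The paper does not prove this statement; Theorem~\ref{TH: Tutte mod-int} is quoted as a classical result of Tutte (with a reference to Younger) and used as background, so there is no in-paper proof to compare against. Your argument is correct and is essentially the standard ``minimize the total boundary and push along a directed path'' proof one finds in the literature; the cut computation you flag as delicate does go through exactly as you outline.

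It is worth noting that the paper's own contribution in this direction is Theorem~\ref{TH: mod flow-odd}, the signed analogue for graphs without long barbells, and its proof follows the same skeleton you describe: lift the $\mathbb{Z}_k$-flow, minimize $\eta(\tau,f)=\sum_v|\partial(\tau,f)(v)|$, and search for a ditrail from a source along which one can ``minus'' to reduce $\eta$. The complications beyond your unsigned argument are that sinks can now be negative edges rather than vertices, so the reachability argument produces a \emph{tadpole} (an unbalanced circuit with a tail) instead of a mere dipath, and one needs the no-long-barbell hypothesis together with the parity of $k$ to finish. So your proposal is precisely the base case on which the paper's extension is modeled.
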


Almost all landmark results in flow theory, such as, the $4$-flow and $8$-flow theorems
 by Jaeger \cite{Jaeger1979}, the $6$-flow theorem
by Seymour \cite{Seymour1981}, the $3$-flow theorems
by Thomassen \cite{Thomassen2012} and by Lov\'asz et al. \cite{Lovasz2013}, are proved for modulo flows.

However, there is no equivalent result in regard to Theorem~\ref{TH: Tutte mod-int} for signed graphs in general.
Bouchet \cite{Bouchet1983} proved for signed graphs that the admission of a modulo $k$-flow implies the admission of a $2k$-flow,
which is a well known result of this kind.

We will prove an analog of
Theorem~\ref{TH: Tutte mod-int} for the family of signed graphs without long barbells.
We show that the admittance of a nowhere-zero modulo $k$-flow and a nowhere-zero $k$-flow are equivalent for $k = 3$ or $k \geq 5$.

In Section \ref{Sum-flow} we study the decomposition of flows.
For unsigned graphs, a positive $k$-flow can be expressed as the sum of some $2$-flows.

\begin{theorem}
\label{TH: sum Tutte}
{\rm (Little, Tutte and Younger \cite{Little1988})}
Let $G$ be an unsigned graph and $(\tau, f)$ be a positive $k$-flow of $G$.
Then
$$(\tau,f) ~ = ~\sum_{i=1}^{k-1}(\tau, f_i),$$ where each $(\tau, f_i)$ is a non-negative $2$-flow.
\end{theorem}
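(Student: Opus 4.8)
The plan is to prove, by induction on $k$, a formally stronger statement: every \emph{non-negative} $k$-flow $(\tau,f)$ (an integral circulation with $0\le f(e)\le k-1$ under a fixed orientation $\tau$) decomposes as a sum of $k-1$ non-negative $2$-flows sharing the orientation $\tau$. Since a positive $k$-flow is in particular a non-negative $k$-flow, this yields the theorem. The reason for passing to non-negative flows is structural: the inductive step subtracts a $2$-flow and will typically destroy positivity on some edges, so the induction cannot be confined to the class of positive flows.

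For the inductive step I would fix, given a non-negative $k$-flow $f$ with $k\ge 3$, the set of saturated edges $E^{*}=\{e:\,f(e)=k-1\}$ and peel off a single non-negative $2$-flow $(\tau,f_{k-1})$ that equals $1$ on every edge of $E^{*}$ and satisfies $0\le f_{k-1}(e)\le \min(1,f(e))$ for every $e$. Granting such an $f_{k-1}$, the circulation $f'=f-f_{k-1}$ satisfies $0\le f'(e)\le k-2$ for all $e$ (the upper bound being forced exactly on $E^{*}$), so $(\tau,f')$ is a non-negative $(k-1)$-flow; the induction hypothesis writes it as a sum of $k-2$ non-negative $2$-flows, and appending $(\tau,f_{k-1})$ gives the required $k-1$ of them. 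The base case $k=2$ is immediate, since a non-negative $2$-flow is already a single such flow.

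The heart of the argument, and the step I expect to be the main obstacle, is producing $f_{k-1}$. I would set lower bounds $\ell(e)=1$ on $E^{*}$ and $\ell(e)=0$ otherwise, upper bounds $u(e)=\min(1,f(e))$, and invoke Hoffman's integral feasible-circulation theorem: an integral circulation $f_{k-1}$ with $\ell\le f_{k-1}\le u$ exists if and only if $\ell(\delta^{+}(S))\le u(\delta^{-}(S))$ for every vertex set $S$, where $\delta^{+}(S),\delta^{-}(S)$ are the arcs leaving and entering $S$ under $\tau$. The key point is that this cut condition follows from the conservation law for $f$ itself by an averaging estimate. Writing $F=f(\delta^{+}(S))=f(\delta^{-}(S))$, every saturated arc leaving $S$ contributes $k-1$ to $F$, whence $\ell(\delta^{+}(S))\le F/(k-1)$; on the other hand every entering arc with $f\ge 1$ contributes at most $k-1$ to $F$, whence $u(\delta^{-}(S))=\#\{e\in\delta^{-}(S):f(e)\ge 1\}\ge F/(k-1)$. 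Chaining the two gives $\ell(\delta^{+}(S))\le F/(k-1)\le u(\delta^{-}(S))$, exactly the inequality needed; integrality of $\ell$ and $u$ then makes $f_{k-1}$ a genuine non-negative $2$-flow.

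I would also flag the natural wrong turn that motivates the whole setup. One might instead try to peel off a $2$-flow that is $1$ on all saturated edges \emph{and} $0$ on all edges with $f=1$, hoping to stay within positive flows; but this imposes both tight lower and upper bounds, and the corresponding cut condition can genuinely fail (for instance at a vertex with two out-arcs of value $k-1$ fed by many in-arcs of value $1$). This failure is precisely why the induction must be run in the non-negative category, where only the single-sided condition above is required, and where the averaging bound closes the argument. The only external ingredient is the integral circulation theorem; everything else reduces to the conservation law together with this estimate.
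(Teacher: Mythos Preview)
Your argument is correct. Note, though, that the paper does not prove this theorem; it is quoted as a known result of Little, Tutte and Younger. What the paper does prove is the signed extension, Theorem~\ref{TH: sum LBB-free}, and specialising that proof to the all-positive signature yields an alternative proof of the present statement which is genuinely different from yours.

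You peel off one non-negative $2$-flow per step via Hoffman's feasible-circulation theorem, with the cut inequality $\ell(\delta^{+}(S))\le F/(k-1)\le u(\delta^{-}(S))$ doing all the work; the induction then runs uniformly in $k$. The paper instead splits on the parity of $k$. For odd $k$ it observes that the edges carrying odd flow value induce an even subgraph, takes a $2$-flow $g$ supported there, and halves the problem via $g_{1}=(f+g)/2$ and $g_{2}=(f-g)/2$, each a non-negative $\frac{k+1}{2}$-flow. For even $k$ it views $f$ as a $\mathbb{Z}_{k-1}$-flow, invokes the modulo-to-integer equivalence (Tutte's theorem in the unsigned case, Theorem~\ref{TH: mod flow-odd} in the signed case) to obtain an integer $(k-1)$-flow $g\equiv f\pmod{k-1}$, and sets $f_{1}=(f-g)/(k-1)$; this $f_{1}$ is precisely a non-negative $2$-flow covering the saturated edges, after which $f-f_{1}$ is a non-negative $(k-1)$-flow. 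Your route is shorter and more self-contained for unsigned graphs; the paper's route was chosen because the parity and modulo-flow ingredients are exactly what survive the passage to signed graphs without long barbells, whereas Hoffman's cut criterion has no direct analogue for bidirected edge-cuts.
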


We extend Theorem~\ref{TH: sum Tutte} to the class of signed graphs without long barbells.

The paper closes with the study of circular flows in Section \ref{C-flow}.
For an unsigned graph $G$, Goddyn et al. \cite{Goddyn} showed  $\Phi_i(G)=\lceil\Phi_c(G)\rceil$.
Raspaud and Zhu \cite{Raspaud_Zhu_2011} conjectured this to be true for a signed graph $(G,\sigma)$ as well, and
they proved that $\Phi_i(G,\sigma) \leq 2 \lceil \Phi_c(G,\sigma) \rceil - 1$. The conjecture was disproved in \cite{signed_regular_graphs} by constructing a family of signed graphs where the supremum of $\Phi_i(G,\sigma) - \Phi_c(G,\sigma)$ is 2 (see one member of the family depicted in Figure \ref{not_G1}). This result was further improved in \cite{maca_stef} by showing that the supremum of $\Phi_i(G,\sigma) - \Phi_c(G,\sigma)$ is $3$ which is best possible. We show that $\Phi_i(G,\sigma)=\lceil\Phi_c(G,\sigma)\rceil$ for a signed graph $(G,\sigma)$ without long barbells and verify the conjecture of Raspaud and Zhu for this family of signed graphs.
The result is a consequence of a normalization theorem for signed graphs which states that every nowhere-zero $\frac{p}{q}$-flow on a signed graph can be normalized in such a way, that each flow value is a  multiple of $\frac{1}{2q}$. For unsigned graphs it is known that every nowhere-zero $\frac{p}{q}$-flow on a signed graph can be normalized in such a way, that each flow value is a  multiple of $\frac{1}{q}$ \cite{Steffen_2001}. We show that this is also true for signed graphs without long barbells.

\section{Notations and Terminology}

Let $G$ be a graph. For $S \subseteq V(G)$, the set $V(G)-S$ is denoted by $S^c$.
For $U_1, U_2\subseteq V(G)$, the set of edges with one end in $U_1$ and the other in $U_2$ is denoted by $\delta_{G}(U_1, U_2)$.
For convenience, we write $\delta_G(U_1, U_1^c)$ for $\delta_G(U_1)$ and $\delta_G(\{v\})$ for $\delta_G(v)$.
The degree of $v$ is $d_G(v)=|\delta_G(v)|$.

A signed graph $(G,\sigma)$ consists of a graph $G$ and a {\em signature} $\sigma : E(G) \rightarrow \{-1,+1\}$ that partitions the edges into negative and positive edges. 
\label{P: implicit}
The set $E_N(G,\sigma)$ denotes the set of all negative edges in $(G,\sigma)$. An unsigned graph can also be considered as a signed graph
with the all-positive signature, i.e.~$E_N(G,\sigma)=\emptyset$.
A circuit $(C,\sigma|_{E(C)})$, or shortly $C$, is a connected $2$-regular subgraph
of $(G,\sigma)$. A circuit $C$
 is {\em balanced} if $|E_N(C)|\equiv 0 \pmod 2$, and it is {\em unbalanced} otherwise.
 A signed graph is {\em balanced} if it does not contain an unbalanced circuit and it is {\em unbalanced} otherwise.
 A {\em signed circuit} is a signed graph of one of the following three types:
\begin{itemize}
 \item[(1)] a balanced circuit;

  \item[(2)] a short barbell, the union of two unbalanced circuits that meet at a single vertex;

   \item[(3)] a long barbell, the union of two disjoint unbalanced circuits with a path that meets the circuits only at its ends.
\end{itemize}

Following Bouchet \cite{Bouchet1983}, we view an edge $e=uv$ of a signed graph $(G,\sigma)$ as two {\em half-edges} $h_e^u$ and $h_e^v$, one incident with $u$ and one incident with $v$.  Let $H_G(v)$ (abbreviated $H(v)$) be the set of all half-edges incident with $v$, and  $H(G)$ be the set of all half-edges in $(G,\sigma)$. An {\em orientation} of  $(G,\sigma)$ is a mapping $\tau: H(G)\rightarrow \{-1, +1\}$ such that for every $e=uv\in E(G)$, $\tau(h_e^u)\tau(h_e^v)=-\sigma(e)$.  If $\tau(h_e^u)=1$, then $h_e^u$ is oriented away from $u$;  if $\tau(h_e^u)=-1$, then $h_e^u$ is oriented toward $u$. Thus, based on the signature, a positive edge can be directed like
\begin{tikzpicture}[scale=0.8]
\path (0.8,0) coordinate (2);\draw [fill=black] (2) circle (0.08cm);
\path (-0.8,0) coordinate (1);\draw [fill=black] (1) circle (0.08cm);
\draw[directed, redirected, line width=0.85pt] (1) -- (2);
\end{tikzpicture}
 or like
  \begin{tikzpicture}[scale=0.8]
\path (0.8,0) coordinate (1);\draw [fill=black] (1) circle (0.08cm);
\path (-0.8,0) coordinate (2);\draw [fill=black] (2) circle (0.08cm);
\draw[redirected, directed, line width=0.85pt] (1) -- (2);
\end{tikzpicture}
 and a negative edge can be directed like
 \begin{tikzpicture}[scale=0.8]
\path (0.8,0) coordinate (2);\draw [fill=black] (2) circle (0.08cm);
\path (-0.8,0) coordinate (1);\draw [fill=black] (1) circle (0.08cm);
\draw[redirected, dotted, line width=0.8pt] (1) -- (2);
\draw[redirected, dotted, line width=0.8pt] (2) -- (1);
\end{tikzpicture}
 or like
 \begin{tikzpicture}[scale=0.8]
\path (0.8,0) coordinate (2);\draw [fill=black] (2) circle (0.08cm);
\path (-0.8,0) coordinate (1);\draw [fill=black] (1) circle (0.08cm);
\draw[directed, dotted, line width=0.8pt] (2) -- (1);
\draw[directed, dotted, line width=0.8pt] (1) -- (2);
\end{tikzpicture}.
A signed graph $(G,\sigma)$ together with an orientation $\tau$ is called an {\em oriented signed graph}, denoted by $(G,\tau)$,  with underlying signature $\sigma_{\tau}$.

\begin{definition}
\label{DE: Flow}
Let $(G,\tau)$ be an oriented signed graph and $f: E(G) \to \mathbb R$ be a mapping. Let $r\ge 2$ be a real number and $k\ge 2$ be an integer.
\begin{itemize}
\item[(1)] The {\em boundary} of $(\tau,f)$ is the mapping $\partial (\tau,f): V(G)\to \mathbb R$ defined as
$$\partial (\tau, f)(v)=\sum_{h\in H(v)}\tau(h)f(e_h)
$$
 for each vertex $v$, where $e_h$ is the edge of $(G,\sigma_{\tau})$ containing $h$.

\item[(2)]The {\em support} of $f$, denoted by $\supp (f)$, is the set of edges $e$ with $|f(e)| >0$.

\item [(3)] If $\partial (\tau,f)=0$, then $(\tau,f)$ is called a {\em flow} of $(G,\sigma_{\tau})$.
A flow $(\tau,f)$ is said to be {\em nowhere-zero} of $(G,\sigma_{\tau})$ if $\supp (f)=E(G)$.

\item [(4)] If $1\leq |f(e)|\leq r-1$ for each $e\in E(G)$,
then the flow $(\tau,f)$ is called a {\em circular $r$-flow} of $(G,\sigma_{\tau})$.

\item [(5)] If $f(e)\in \mathbb Z$ and $1\leq |f(e)|\leq k-1$ for each $e\in E(G)$,
then the flow $(\tau,f)$ is called a {\em nowhere-zero $k$-flow} of $(G,\sigma_{\tau})$.

\item [ (6)] If $\partial (\tau,f)\equiv 0 \pmod k$ and $f(e)\in \mathbb{Z}_k\setminus \{0\}$ for each $e \in E(G)$, then the flow $(\tau,f)$ is called a {\em nowhere-zero modulo $k$-flow} or a {\em nowhere-zero $\mathbb{Z}_k$-flow} of $(G,\sigma_{\tau})$.
\end{itemize}
\end{definition}

 A signed graph is {\em flow-admissible} if it admits a nowhere-zero $k$-flow for some integer $k$. In a signed graph,
 {\em switching} at a vertex $u$ means reversing the signs of all edges incident with $u$. Two signed graphs are {\em equivalent} if one can be obtained from the other
by a sequence of switches. Then a signed graph is  balanced if and only  if it is equivalent to a graph without negative edges. 
 Note that switching at a vertex does not change the parity of the number of negative edges in a circuit and it does not change the flows either. Bouchet \cite{Bouchet1983} gave a characterization for flow-admissible signed graphs.

 \begin{proposition} {\rm (Bouchet~\cite{Bouchet1983})}\label{flow admissible}
 A connected signed graph $(G,\sigma)$ is flow-admissible if and only if it is not equivalent to a signed graph with exactly one negative edge and has no cut-edge $b$ such that $(G-b,\sigma|_{G-b})$ has a balanced component.
 \end{proposition}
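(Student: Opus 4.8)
The plan is to prove the two implications separately, in each direction reducing flow-admissibility to the purely combinatorial property that every edge of $(G,\sigma)$ lies on a signed circuit. The two facts I would rely on are elementary: first, every signed circuit carries a nowhere-zero flow supported exactly on its edges (a balanced circuit a $2$-flow, and a short or long barbell a $3$-flow, obtained by sending value $1$ around each unbalanced circuit and routing the resulting excess of $2$ through the shared vertex or the connecting path); second, a nowhere-zero real flow can always be turned into a nowhere-zero integer $k$-flow for some $k$, since the flow space is cut out by rational equations, so rational nowhere-zero flows exist and denominators may be cleared. With these in hand the two implications become statements about signed circuits.

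For necessity I would argue directly that violating either condition destroys every nowhere-zero flow, using the summation of boundaries over a vertex set. Because switching preserves flows, if $(G,\sigma)$ is equivalent to a signed graph with exactly one negative edge $e$, it suffices to work in that representative: for a positive edge $g=xy$ the two half-edges satisfy $\tau(h_g^x)\tau(h_g^y)=-1$, so its contribution to $\sum_{w\in V(G)}\partial(\tau,f)(w)$ is $0$, whereas the unique negative edge $e=uv$ has $\tau(h_e^u)=\tau(h_e^v)$ and contributes $2\tau(h_e^u)f(e)$; since every boundary vanishes the whole sum is $0$, forcing $f(e)=0$. Likewise, if $b$ is a cut-edge and $(G-b,\sigma|_{G-b})$ has a balanced component $H$, switch so that all edges inside $H$ are positive; then in $\sum_{w\in V(H)}\partial(\tau,f)(w)$ all internal edges cancel and only the single half-edge of $b$ lying in $H$ survives, giving $\tau(h_b^u)f(b)=0$ and hence $f(b)=0$. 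In either case no nowhere-zero flow exists, so both conditions are necessary.

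For sufficiency I would show that, under the two conditions, every edge $e=uv$ lies on a signed circuit; combining for each edge $g$ a signed-circuit flow whose support contains $g$, in a generic positive rational combination, yields a flow that is nonzero on every edge, and clearing denominators produces the desired nowhere-zero $k$-flow. The easy part of the covering statement is when $e$ is a cut-edge: the second condition forces both components of $G-e$ to contain an unbalanced circuit, and joining these two vertex-disjoint unbalanced circuits by a path through $e$ exhibits $e$ on a long barbell. If $e$ is not a cut-edge and lies on some balanced circuit we are also done.

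The remaining case, and the step I expect to be the main obstacle, is that $e$ is not a cut-edge yet every circuit through $e$ is unbalanced. Here I would take an unbalanced circuit $C_1$ through $e$ (available since $G-e$ is connected), switch so that $C_1$ carries exactly one negative edge, and invoke the first condition: if all negative edges lay on $C_1$ there would be only one negative edge in total and $(G,\sigma)$ would be equivalent to a single-negative-edge graph, contradicting the hypothesis, so some negative edge $g$ lies off $C_1$. A circuit through $g$ avoiding the negative edge of $C_1$ is again unbalanced, and together with $C_1$ it should form a short or long barbell through $e$. The delicate point is to route this second unbalanced circuit $C_2$ so that it meets $C_1$ in at most one vertex, which genuinely requires connectivity arguments (Menger-type disjoint-path routing, reduction to the $2$-edge-connected case, and suppression of degree-$2$ vertices) together with a case analysis ruling out the degenerate configurations in which $C_2$ is forced to overlap $C_1$ in a path; making this extraction precise is the crux of the argument.
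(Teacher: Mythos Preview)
The paper does not prove this proposition at all: it is quoted as a result of Bouchet \cite{Bouchet1983} and used without proof, so there is no ``paper's own proof'' to compare against.

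As to your attempt itself: the necessity direction is correct and standard. For sufficiency, your reduction to ``every edge lies on a signed circuit'' is the right target (this is indeed Bouchet's characterization), and the cut-edge case and the balanced-circuit case are fine. But the remaining case---$e$ is not a cut-edge and every circuit through $e$ is unbalanced---is not actually handled: you yourself flag that producing a second unbalanced circuit $C_2$ meeting $C_1$ in at most one vertex ``is the crux of the argument,'' and you do not carry it out. The sentence ``A circuit through $g$ avoiding the negative edge of $C_1$ is again unbalanced'' is not justified (after your switching there may be further negative edges off $C_1$, so a circuit through $g$ could pick up an even number of them), and even granting an unbalanced $C_2$, nothing you have said forces $C_1$ and $C_2$ to meet in at most one vertex; when they overlap in a path one typically recovers a \emph{balanced} circuit through $e$, contradicting the case hypothesis, but this needs to be argued carefully and is where the real work lies. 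So as written the sufficiency direction has a genuine gap exactly where you anticipate it.
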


 The following lemma is a direct consequence of Proposition \ref{flow admissible} and the definition of long barbell.

\begin{lemma}
\label{LE: bridgeless}
Let $(G,\sigma)$ be a signed graph without long barbells. Then for each $X\subseteq V(G)$,
one of $(G[X],\sigma|_{E(G[X])})$ and $(G[X^c],\sigma|_{E(G[X^c])})$ is balanced. Thus, if $(G,\sigma)$ is flow-admissible,
then $(G,\sigma)$ is bridgeless.
\end{lemma}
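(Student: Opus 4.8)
The plan is to prove the two assertions of the lemma separately, with the bridgelessness statement following quickly from the first assertion together with Bouchet's characterization in Proposition~\ref{flow admissible}. For the first assertion I would argue by contraposition: assuming that both $(G[X],\sigma|_{E(G[X])})$ and $(G[X^c],\sigma|_{E(G[X^c])})$ are unbalanced, I would exhibit a long barbell in $(G,\sigma)$, contradicting the hypothesis. Throughout I take $G$ to be connected, as is implicit in the flow setting; this is precisely what makes the first statement true.

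For the construction, recall that by the stated definition of balancedness an unbalanced signed graph contains an unbalanced circuit. Hence the unbalanced subgraph on $X$ contains an unbalanced circuit $C_1$ with $V(C_1)\subseteq X$, and the unbalanced subgraph on $X^c$ contains an unbalanced circuit $C_2$ with $V(C_2)\subseteq X^c$. Since $X\cap X^c=\emptyset$, the circuits $C_1$ and $C_2$ are vertex-disjoint. Using connectivity of $G$, I would choose a shortest path $P$ in $G$ between $V(C_1)$ and $V(C_2)$, with ends $u\in V(C_1)$ and $w\in V(C_2)$. By minimality, $P$ has no internal vertex in $V(C_1)\cup V(C_2)$, so $P$ meets $C_1\cup C_2$ exactly at $u$ and $w$; moreover $u\neq w$ forces $P$ to have at least one edge. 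Then $C_1\cup P\cup C_2$ is exactly a long barbell as in item~(3) of the definition of a signed circuit, the desired contradiction. This shows that at least one of the two induced subgraphs is balanced.

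For the second assertion, suppose $(G,\sigma)$ is flow-admissible and, for contradiction, that it has a cut-edge (bridge) $b$. Since $G$ is connected, $G-b$ has exactly two components; let $X$ and $X^c$ be their vertex sets, so that $(G-b,\sigma|_{G-b})$ has components $(G[X],\sigma|_{E(G[X])})$ and $(G[X^c],\sigma|_{E(G[X^c])})$. By the first assertion, one of these components is balanced. But then $b$ is a cut-edge such that $(G-b,\sigma|_{G-b})$ has a balanced component, which by Proposition~\ref{flow admissible} contradicts flow-admissibility. Hence $(G,\sigma)$ is bridgeless.

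The argument is short, and the only point demanding care is the construction of the connecting path in the first assertion: one must ensure that $P$ meets the two unbalanced circuits only at its endpoints, so that $C_1\cup P\cup C_2$ is genuinely a long barbell rather than, say, two circuits sharing vertices, and this is exactly why I select a path of minimum length between the two vertex sets. A secondary caveat is that the first assertion relies on $G$ being connected; without this hypothesis, two unbalanced circuits lying in distinct components would furnish a counterexample, so I would either assume connectivity from the outset or restrict the claim to connected $(G,\sigma)$.
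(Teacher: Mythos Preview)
Your argument is correct and matches the paper's intended reasoning: the paper does not spell out a proof but simply states the lemma is ``a direct consequence of Proposition~\ref{flow admissible} and the definition of long barbell,'' and your contrapositive construction of a long barbell from two vertex-disjoint unbalanced circuits together with your appeal to Proposition~\ref{flow admissible} is exactly that direct consequence. Your observation about the implicit connectivity hypothesis in the first assertion is a valid caveat that the paper glosses over.
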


For a flow-admissible signed graph $(G,\sigma)$, its {\em circular flow number} and {\em integer flow number}
are defined respectively by
\begin{eqnarray*}
\Phi_c(G,\sigma)&=&\inf\{r : \mbox{$(G,\sigma)$ admits a circular $r$-flow}\},\\
\Phi_i(G,\sigma)&=&\min\{k : \mbox{$(G,\sigma)$ admits a nowhere-zero $k$-flow}\}.
\end{eqnarray*}

Raspaud and Zhu \cite{Raspaud_Zhu_2011} showed that $\Phi_c(G,\sigma)$ is a rational number for any
flow-admissible signed graph $(G,\sigma)$ and
$\Phi_c(G,\sigma) = \min \{r : \mbox{$(G,\sigma)$ admits a circular $r$-flow}\}$, just like for unsigned graphs.

\section{Integer flows and modulo flows}
\label{M-I-flow}

\subsection{Integer flows}

This subsection will extend Khelladi's result (Theorem \ref{TH: 3-edge-connected}) to the class of all signed graphs without long barbells.
For the proof of our result we will need the following two results:

\begin{theorem}{\rm (Seymour~\cite{Seymour1981})}
\label{6-flow}
Every bridgeless unsigned graph admits a nowhere-zero $6$-flow.
\end{theorem}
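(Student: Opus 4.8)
The plan is to reduce a nowhere-zero $6$-flow to a pair of flows in the two prime-power factors of $6$ and to build that pair by induction along an ear decomposition. The starting point is the isomorphism $\mathbb{Z}_6\cong\mathbb{Z}_2\times\mathbb{Z}_3$: under it a nowhere-zero $\mathbb{Z}_6$-flow corresponds exactly to a pair $(\beta,\gamma)$ in which $\beta$ is a $\mathbb{Z}_2$-flow, $\gamma$ is a $\mathbb{Z}_3$-flow, and no edge is assigned $0$ by both, that is, $\supp(\beta)\cup\supp(\gamma)=E(G)$. By Theorem~\ref{TH: Tutte mod-int}, producing such a modular pair is enough, since the resulting nowhere-zero $\mathbb{Z}_6$-flow lifts to a nowhere-zero integer $6$-flow. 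So the whole task becomes: \emph{find a $\mathbb{Z}_2$-flow and a $\mathbb{Z}_3$-flow whose supports jointly cover every edge.}

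It is convenient to recast this using the fact that the support of a $\mathbb{Z}_2$-flow is precisely an even subgraph (a disjoint union of circuits). Thus I would look for an even subgraph $B\subseteq E(G)$ together with a $\mathbb{Z}_3$-flow $\gamma$ that is nonzero on every edge outside $B$; taking $\beta$ to be the all-ones $\mathbb{Z}_2$-flow on $B$ then gives the desired pair, covered by $\beta$ on $B$ and by $\gamma$ off $B$. One must resist the temptation to ask for $\gamma$ nonzero on \emph{all} of $G$: already $K_4$ has no nowhere-zero $\mathbb{Z}_3$-flow, so the even subgraph $B$ is genuinely needed to absorb the edges that $\gamma$ is forced to miss.

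To construct $B$ and $\gamma$ I would exploit that every connected bridgeless graph has an ear decomposition $C=G_1\subset G_2\subset\cdots\subset G_m=G$, where $G_1$ is a circuit and each $G_i$ is obtained from $G_{i-1}$ by attaching an ear $P_i$, a path whose two endpoints lie in $G_{i-1}$ and whose remaining vertices and edges are new; closed ears (both endpoints equal) accommodate cut vertices, so that mere bridgelessness suffices. Starting from $\gamma\equiv 1$ on $G_1$ and $B=\emptyset$, I would process the ears in order, each time arranging that the newly added edges of $P_i$ become covered, either by putting them into $B$ or by pushing a fresh unit of $\gamma$-flow around the circuit formed by $P_i$ and a return path inside $G_{i-1}$, while choosing the return route and orientations so as to leave the already-covered edges covered.

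The main obstacle is precisely this last clause. Extending one of the two flows across a new ear forces a compensating flow back through $G_{i-1}$, and that returning flow can cancel $\gamma$ to $0$, or toggle $\beta$ off, on edges that were covered only by the flow being disturbed; naively this can cascade and uncover old edges. Controlling it is the combinatorial heart of the theorem: one needs a global discipline on the order of the ears, on which flow is extended at each step, and on the return paths, guaranteeing that after every step the pair $(B,\gamma)$ still covers $E(G_i)$. Equivalently, the difficulty is to prove the existence of the even subgraph $B$ of the second paragraph directly, a structural statement about bridgeless graphs that does not follow from any local or greedy argument and is exactly the content supplied by Seymour's lemma. Once that invariant is secured to the final stage, the pair $(\beta,\gamma)$ on $G_m=G$ is the sought nowhere-zero $\mathbb{Z}_2\times\mathbb{Z}_3$-flow, and hence a nowhere-zero $6$-flow.
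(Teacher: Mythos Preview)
The paper does not prove this theorem; it is quoted as a known result of Seymour~\cite{Seymour1981} and used as a black box in the proof of Theorem~\ref{TH: 6-flow}. So there is no ``paper's own proof'' to compare against.

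As for your proposal itself: the reduction via $\mathbb{Z}_6\cong\mathbb{Z}_2\times\mathbb{Z}_3$ and the reformulation as finding an even subgraph $B$ and a $\mathbb{Z}_3$-flow $\gamma$ with $B\cup\supp(\gamma)=E(G)$ are exactly right, and this is indeed how Seymour proceeds. But what you have written is an outline, not a proof. You explicitly stop at the hard step---maintaining the coverage invariant as ears are added---and defer it to ``Seymour's lemma'' without stating or proving that lemma. That is precisely the content of the theorem; everything before it is bookkeeping. Moreover, a plain ear decomposition processed greedily does not work for the reason you yourself identify: pushing $\gamma$-flow back through $G_{i-1}$ can kill $\gamma$ on edges covered only by $\gamma$, and you have no mechanism to prevent this. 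Seymour's actual argument does not run along an ear decomposition in this naive sense; after reducing to the $3$-edge-connected case he builds a nested family of vertex-disjoint circuits (the eventual $B$) together with connecting paths, chosen so that contracting $B$ leaves a graph in which every edge lies in a $\mathbb{Z}_3$-flow support---this is the structural lemma you allude to but do not supply. Until you state and prove that lemma (or an equivalent), the proposal has a genuine gap at its center.
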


 \begin{lemma}{\rm (Lu, Luo and Zhang \cite{LuLuoZhang})}
 \label{extend k-flow}
Let $G$ be an unsigned graph with an orientation $\tau$ and assume that $G$ admits a nowhere-zero $k$-flow. If a vertex $u$ of $G$ has degree at most $3$ and $\gamma: \delta_G(u)\to \{\pm 1, \dots, \pm (k-1)\}$ satisfies $\partial(\tau,\gamma)(u)= 0$, then there is a nowhere-zero $k$-flow $(\tau,\phi)$ of $G$ so that $\phi|_{\delta(u)}=\gamma$.
\end{lemma}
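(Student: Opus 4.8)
The plan is to keep the orientation $\tau$ fixed throughout and to obtain $\phi$ by perturbing the given nowhere-zero $k$-flow $(\tau,f)$, using the bound $d_G(u)\le 3$ to keep the geometry of the perturbation under control. I will assume $G$ is connected (otherwise argue componentwise) and note at the outset that $G$ is bridgeless, since the flow value on a bridge is forced to be $0$.

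The first step is a structural observation about $u$. I claim that $G-u$ is connected. Indeed, if $G-u$ had at least two components, then each of them would meet $u$ in at least two edges, because a component joined to $u$ by a single edge would make that edge a bridge of $G$; two such components would force $d_G(u)\ge 4$. Consequently $\delta_G(u)$ is the unique edge cut separating $u$ from the rest of $G$, and the only linear obstruction to extending a prescribed assignment on $\delta_G(u)$ to an integer flow of $G$ is that the oriented net value across this cut vanish. This is exactly the hypothesis $\partial(\tau,\gamma)(u)=0$.

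Using this, I would first produce \emph{some} integer flow $\phi_0$ of $G$, not yet nowhere-zero and not yet in range, with $\phi_0|_{\delta_G(u)}=\gamma$. Concretely, start from $(\tau,f)$ and add an integer flow $\eta$ whose restriction to $\delta_G(u)$ equals $\gamma-f|_{\delta_G(u)}$; such an $\eta$ exists because $\gamma-f|_{\delta_G(u)}$ is balanced at $u$, and the resulting demands at the at most three neighbors of $u$ can be routed through the connected graph $G-u$. Every flow of $G$ agreeing with $\gamma$ on $\delta_G(u)$ is then of the form $\phi_0+\psi$ with $\psi$ a flow supported on $E(G)\setminus\delta_G(u)$, i.e.\ a flow of $H:=G-u$. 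Thus the problem reduces to showing that the connected graph $H$ carries a nowhere-zero, in-range $k$-flow with the prescribed boundary that $\gamma$ imposes at the neighbors of $u$.

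The heart of the argument, and the step I expect to be the main obstacle, is precisely this reduction: passing from one boundary to another on $H$ while staying nowhere-zero and inside $\{\pm1,\dots,\pm(k-1)\}$. Here $f|_H$ is already a nowhere-zero in-range flow of $H$ with some boundary, and the target boundary differs from it by a function on the at most three neighbors of $u$ that sums to zero; hence the difference decomposes into at most two terminal-to-terminal transfers, each realizable by pushing flow along a path of the connected graph $H$. The trouble is that such pushes can create zeros or overflow the range, and the degree bound $d_G(u)\le 3$ is what keeps the number of transfers, and so the interference, minimal. I would resolve this by passing to $\mathbb{Z}_k$, where the range constraint is vacuous and only the nowhere-zero condition survives: first realize the prescribed boundary by a nowhere-zero $\mathbb{Z}_k$-flow of $H$, rerouting transfers around any created zeros (always possible in a connected graph), and then lift back to an integer $k$-flow of $G$ that still restricts to $\gamma$ on $\delta_G(u)$. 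Controlling this lift so that it simultaneously preserves the prescribed values on $\delta_G(u)$ and keeps every value in $\{\pm1,\dots,\pm(k-1)\}$ is the delicate point on which the whole proof turns.
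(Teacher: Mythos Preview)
This lemma is quoted from \cite{LuLuoZhang} and the present paper supplies no proof of it, so there is nothing here to compare your argument against directly. Judged on its own terms, what you have written is a plan rather than a proof, and you acknowledge as much: the final paragraph names ``controlling this lift'' as ``the delicate point on which the whole proof turns'' and then stops short of resolving it.

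Two specific steps are genuinely incomplete. First, the claim that in the connected graph $H=G-u$ one can always realise the prescribed boundary by a \emph{nowhere-zero} $\mathbb{Z}_k$-flow, ``rerouting transfers around any created zeros (always possible in a connected graph)'', is unproved. Connectedness of $H$ guarantees some $\mathbb{Z}_k$-flow with the given boundary, but not a nowhere-zero one; at minimum you would need to argue, using $d_G(u)\le 3$ and bridgelessness of $G$, that on every bridge of $H$ the forced value equals $\pm\gamma(e_i)$ for one of the at most three edges $e_i\in\delta_G(u)$ and is therefore nonzero modulo $k$, and then deal with the non-bridge edges. Second, even granting a nowhere-zero $\mathbb{Z}_k$-flow on $G$ that agrees with $\gamma$ on $\delta_G(u)$, the passage back to an integer nowhere-zero $k$-flow is Tutte's lifting, and the standard proof pushes $k$ units of flow along paths that may well traverse edges of $\delta_G(u)$ and destroy the prescribed values there. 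You would need a relative form of the lift that freezes $\delta_G(u)$; since $\partial(\tau,\gamma)(u)=0$ holds over the integers and $H$ is connected, routing all corrections inside $H$ is plausible, but it is a lemma in its own right, not something to wave through. Until both steps are actually carried out, the proposal remains a sketch.
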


\begin{theorem}
\label{TH: 6-flow}
Let $(G,\sigma)$ be a flow-admissible signed graph. If $(G,\sigma)$ contains no long barbells, then it admits a nowhere-zero $6$-flow.
\end{theorem}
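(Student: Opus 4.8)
The plan is to argue by induction on $|E(G)|$, taking a minimal counterexample $(G,\sigma)$ that is flow-admissible, contains no long barbell, but admits no nowhere-zero $6$-flow. We may assume $G$ is connected, since a nowhere-zero $6$-flow can be assembled component by component. If $(G,\sigma)$ is balanced, then after a suitable sequence of switches it has no negative edges, so it is an unsigned graph which is bridgeless by Lemma \ref{LE: bridgeless}, and Theorem \ref{6-flow} supplies a nowhere-zero $6$-flow; hence we may assume $(G,\sigma)$ is unbalanced. By Lemma \ref{LE: bridgeless} the graph $G$ is bridgeless, and being connected it is $2$-edge-connected. If $(G,\sigma)$ is $3$-edge-connected, Theorem \ref{TH: 3-edge-connected} applies directly. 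So the substantive case is that $G$ has a $2$-edge-cut $\delta_G(X)=\{e_1,e_2\}$, where $e_i=u_iv_i$ with $u_i\in X$ and $v_i\in X^c$.

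First I would set up the reduction. By Lemma \ref{LE: bridgeless} one of $G[X],G[X^c]$ is balanced; say $G[X]$ is, and after switching assume every edge of $G[X]$ is positive. Because $G$ is $2$-edge-connected, $G[X]$ is connected and both cut edges meet it. Form the smaller signed graph $(G',\sigma')$ from $G[X^c]$ by adding a single edge $e=v_1v_2$ with $\sigma'(e)=\sigma(e_1)\sigma(e_2)$, that is, by collapsing the balanced side into one edge. The key verifications are that $(G',\sigma')$ is again flow-admissible, through Proposition \ref{flow admissible}, and still contains no long barbell: for the latter, two disjoint unbalanced circuits in $G'$ would pull back to $G$, since at most one of them can use the new edge $e$ and rerouting that one through the balanced $G[X]$ preserves its parity, thereby producing a long barbell in $G$, a contradiction. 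Thus $(G',\sigma')$ satisfies the induction hypothesis and carries a nowhere-zero $6$-flow, whose value on $e$ I will call $t$, with $1\le |t|\le 5$.

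The crux is lifting this flow back across the $2$-edge-cut so that it is nowhere-zero on the balanced side as well, and here Lemma \ref{extend k-flow} does the work. Regard $H=G[X]$ as an unsigned graph and adjoin an auxiliary vertex $z$ together with two edges $zu_1,zu_2$, forming $H'$; since the external connection of $X$ through $X^c$ is replaced by the path $u_1zu_2$, the graph $H'$ is a bridgeless unsigned graph and so admits a nowhere-zero $6$-flow by Theorem \ref{6-flow}. The vertex $z$ has degree $2\le 3$, so Lemma \ref{extend k-flow} lets me prescribe the two values on $\delta_{H'}(z)$ so as to route exactly $t$ from $u_1$ to $u_2$ (a boundary-zero prescription at $z$) and still obtain a nowhere-zero $6$-flow of $H'$. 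Deleting $z$ yields a nowhere-zero flow on $H$ whose net effect is to carry $t$ through the cut, matching the value $t$ on $e$ on the outer side. Re-expanding $e$ into $e_1,e_2$ and combining the two flows, with the signs arranged so that conservation holds at $u_1,u_2,v_1,v_2$, produces a nowhere-zero $6$-flow of $(G,\sigma)$, contradicting the choice of counterexample.

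The main obstacle I anticipate is not the gluing itself, for the auxiliary degree-$2$ vertex $z$ makes Lemma \ref{extend k-flow} apply cleanly and dissolves the value-matching problem that would otherwise be delicate, but rather the bookkeeping needed to show that the reduced graph $(G',\sigma')$ stays both flow-admissible and long-barbell-free in all degenerate configurations of the cut, for instance $u_1=u_2$, $v_1=v_2$, or $|X|=1$, which create loops, digons, or a trivial balanced side. In particular one must verify via Proposition \ref{flow admissible} that collapsing the balanced side never manufactures a graph equivalent to one with a single negative edge. These degenerate cases, together with the sign and orientation accounting on the cut edges, are where the real care is required.
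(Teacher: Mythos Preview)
Your proposal is correct and follows essentially the same route as the paper's proof: reduce to the $3$-edge-connected case (Theorem~\ref{TH: 3-edge-connected}) by handling a $2$-edge-cut via Lemma~\ref{LE: bridgeless} to identify a balanced side, apply induction on the unbalanced side, and use Lemma~\ref{extend k-flow} at a degree-$2$ vertex on the (unsigned) balanced side to match the prescribed cut values. The only cosmetic difference is that the paper contracts each side to a single vertex (producing $G_1'$ and $G_2'$) rather than replacing the balanced side by a single edge and adjoining an auxiliary vertex $z$; these constructions are equivalent up to suppressing the resulting degree-$2$ vertex, and the paper, like you, simply asserts that the reduced signed graph remains flow-admissible and long-barbell-free without working through the degenerate cases you flag.
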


\begin{proof}  Suppose to the contrary that the statement is not true.
Let $(G,\sigma)$ be a counterexample with $|E(G)|$ minimum. We will deduce a contradiction to Theorem \ref{TH: 3-edge-connected},
by showing that $G$ is 3-edge-connected.

If $G$ has vertices of degree two, then the graph $\overline{G}$ obtained by suppressing all vertices of degree two remains  flow-admissible and contains no long barbells. Thus  by the minimality of $G$, $\overline{G}$ admits a nowhere-zero $6$-flow, so does $G$, a contradiction. Hence $G$ contains no vertices of degree two. Since $(G,\sigma)$ is flow-admissible, it contains no vertices of degree one and thus the minimum degree of $G$ is at least three. Furthermore, by Lemma~\ref{LE: bridgeless}, $(G,\sigma)$ is bridgeless since it contains no long barbells.

Suppose that $(G,\sigma)$ has a $2$-edge-cut, say $\{u_1u_2,w_1w_2\}$. Let $(G_1,\sigma|_{E(G_1)})$ and $(G_2,\sigma|_{E(G_2)})$ be the two components of $G - \{e_1,e_2\}$ where $e_1=u_1u_2$ and $e_2=w_1w_2$ with $u_i,w_i \in V(G_i)$ for $i=1,2$.
 By Lemma~\ref{LE: bridgeless} again, one of $(G_1,\sigma|_{E(G_1)})$ and $(G_2,\sigma|_{E(G_2)})$ is balanced. We may  assume that $(G_1,\sigma|_{E(G_1)})$ is balanced.
By switching, we may further assume that all edges in $(G_1,\sigma|_{E(G_1)})$ are positive. Fix an arbitrary $\tau$ on $H(G)$. Let $G'_1$ be the unsigned graph obtained from $(G,\sigma)$ by contracting $H(G_2)\cup \{h_{e_1}^{u_2}, h_{e_2}^{w_2}\}$ into a vertex $v_1$, and let $(G'_2,\sigma|_{E(G'_2)})$ be the signed graph obtained from $(G,\sigma)$ by contracting $H(G_1)$ into a vertex $v_2$. An illustration on $G_1'$ and $(G'_2,\sigma|_{E(G'_2)})$ is shown in Figure \ref{FIG: contract}.

\begin{figure}[h]
\begin{center}
\begin{tikzpicture}[scale=0.5]

\path(-2,1) coordinate (u1);\draw [fill=black] (u1) circle (0.1cm);
\path(2,1) coordinate (u2);\draw [fill=black] (u2) circle (0.1cm);
\path(-2,-1) coordinate (w1);\draw [fill=black] (w1) circle (0.1cm);
\path(2,-1) coordinate (w2);\draw [fill=black] (w2) circle (0.1cm);

\draw [directed,dotted,line width=0.85] (u1)--(0,1);
\draw [directed,dotted,line width=0.85] (u2)--(0,1);
\draw [redirected,directed,line width=0.85] (w1)--(w2);
\draw [line width=0.85] (-2.5,0) ellipse (1.35 and 2.5);
\draw [line width=0.85] (2.5,0) ellipse (1.35 and 2.5);

\node[left] at (-2,1){$u_1$};
\node[right] at (2,1){$u_2$};
\node[left] at (-2,-1){$w_1$};
\node[right] at (2,-1){$w_2$};
\node[below] at (-3.25,0.5){\small $G_1$};
\node[below] at (3.25,0.5){\small $G_2$};
\node[below] at (0,-2){\small $G$};

\node[below] at (5,0.5){\small $\Rightarrow$};
\node[below] at (-5,0.5){\small $\Leftarrow$};

\path(-9,1) coordinate (u11);\draw [fill=black] (u11) circle (0.1cm);
\path(-9,-1) coordinate (w11);\draw [fill=black] (w11) circle (0.1cm);
\node[left] at (-9,1){$u_1$};
\node[left] at (-9,-1){$w_1$};
\draw [line width=0.85] (-9.5,0) ellipse (1.35 and 2.5);
\node[below] at (-10.25,0.5){\small $G_1$};
\node[below] at (-8,-2){\small $G_1'$};
\node[right] at (-7,0){$v_1$};
\path(-7,0) coordinate (v1);\draw [fill=black] (v1) circle (0.1cm);
\draw [directed,line width=0.85] (u11)--(v1);
\draw [directed,line width=0.85] (w11)--(v1);

\path(11,1) coordinate (u21);\draw [fill=black] (u21) circle (0.1cm);
\node[below] at (12.25,0.5){\small $G_2$};
\node[below] at (10,-2){\small $G_2'$};
\path(11,-1) coordinate (w21);\draw [fill=black] (w21) circle (0.1cm);
\node[right] at (11,1){$u_2$};
\node[right] at (11,-1){$w_3$};
\draw [line width=0.85] (11.5,0) ellipse (1.35 and 2.5);
\node[left] at (7,0){$v_2$};
\path(7,0) coordinate (v2);\draw [fill=black] (v2) circle (0.1cm);
\draw [redirected,dotted,line width=0.85] (u21)--(v2);
\draw [redirected,dotted,line width=0.85] (v2)--(u21);

\draw [redirected,directed,line width=0.85] (v2)--(w21);

\end{tikzpicture}
\end{center}
\caption{\small\it An illustration on  how to construct $G_1'$ and $(G'_2,\sigma|_{E(G'_2)})$  from $(G,\sigma)$.}
\label{FIG: contract}
\end{figure}
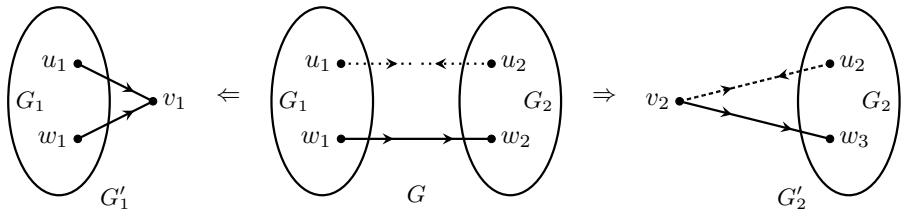


From the definition of $(G'_2,\sigma|_{E(G'_2)})$, we know that $(G'_2,\sigma|_{E(G'_2)})$ is flow-admissible and contains no long barbells. So $(G'_2,\sigma|_{E(G'_2)})$ admits a nowhere-zero $6$-flow $(\tau|_{H(G_2')},f_2)$ by the minimality of $(G,\sigma)$. Assign $\gamma(v_1u_1) = f_2(v_2u_2)$ and $\gamma(v_1w_1) = f_2(v_2w_2)$. Since $G_1'$ is an unsigned graph, the restriction of $\tau$ on $H(G_1)\cup \{h_{e_1}^{u_1}, h_{e_2}^{w_1}\}$ can be considered as an orientation of $G_1'$, denoted by $\tau_1$. Then we have $\partial (\tau_1,\gamma) (v_1) =\partial (\tau|_{H(G_2')},f_2)(v_2)= 0$. By Theorem~\ref{6-flow} and Lemma~\ref{extend k-flow}, there is a nowhere-zero $6$-flow $(\tau_1,f_1)$ of $G'_1$ such that $f_1|_{\delta_{G'_1}(v_1)}=\gamma=f_2|_{\delta_{G'_2}(v_2)}$. Thus $(\tau_1,f_1)$ and $(\tau|_{H(G_2')},f_2)$ can be combined to a nowhere-zero $6$-flow of $(G,\sigma)$, a contradiction. Therefore $G$ is $3$-edge-connected, and thus Theorem \ref{TH: 6-flow} is true.
\end{proof}

\subsection{From modulo flows to integer flows}

In flow theory, an integer flow and a modulo flow are different by their definitions,
but they are equivalent for unsigned graphs as shown by Tutte \cite{Tutte54} (see Theorem \ref{TH: Tutte mod-int}).
However, Tutte's result cannot be applied for signed graphs (see e.g.~\cite{Xu2005}).
That is, there is a gap between modulo flows and integer flows for signed graphs.

In this subsection, we will extend Tutte's result and show that the equivalence between nowhere-zero $\mathbb{Z}_k$-flows and
nowhere-zero $k$-flows still holds for signed graphs without long barbells when $k= 3$ or $k\geq 5$.

\begin{theorem}
\label{TH: mod flow}
Let $(G,\sigma)$ be a signed graph without long barbells
and let $ k$ be an integer with $k = 3$ or $k\geq 5$. Then $(G,\sigma)$ admits a nowhere-zero $\mathbb{Z}_k$-flow if and only if it admits a nowhere-zero $k$-flow.
\end{theorem}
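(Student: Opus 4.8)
The plan is to prove the nontrivial direction: a nowhere-zero $\mathbb{Z}_k$-flow on $(G,\sigma)$ implies a nowhere-zero integer $k$-flow (the converse being immediate, since reducing an integer flow modulo $k$ yields a $\mathbb{Z}_k$-flow). As in the proof of Theorem~\ref{TH: 6-flow}, I would argue by induction on $|E(G)|$, taking a minimal counterexample $(G,\sigma)$ and aiming to show it must be highly connected, so that the remaining case can be reduced to the unsigned Tutte equivalence (Theorem~\ref{TH: Tutte mod-int}). First I would dispose of the low-connectivity cases exactly as in Theorem~\ref{TH: 6-flow}: vertices of degree one contradict flow-admissibility, vertices of degree two can be suppressed, and bridges are excluded by Lemma~\ref{LE: bridgeless}. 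For a $2$-edge-cut $\{e_1,e_2\}$ separating $G$ into $G_1,G_2$, Lemma~\ref{LE: bridgeless} guarantees one side, say $G_1$, is balanced; after switching it to be all-positive, I would split along the cut into an unsigned graph $G_1'$ and a smaller signed graph $(G_2',\sigma')$ without long barbells, apply the inductive hypothesis to the latter to obtain an integer $k$-flow, and extend it across $G_1'$ using Theorem~\ref{TH: Tutte mod-int} together with the boundary-matching argument. This step parallels the $6$-flow proof closely, so the real content lies elsewhere.

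The crux is the case where $(G,\sigma)$ is $3$-edge-connected. Here I cannot simply invoke an unsigned result, because $(G,\sigma)$ genuinely has negative edges and a $\mathbb{Z}_k$-flow on it need not lift to an integer flow in general. The key structural fact I would exploit is that a flow-admissible, $3$-edge-connected signed graph without long barbells has all its negative edges concentrated in one ``unbalanced part'': more precisely, by Lemma~\ref{LE: bridgeless}, for every vertex subset $X$ one of $G[X]$ and $G[X^c]$ is balanced, which severely constrains how unbalanced circuits can be distributed. I expect that such a graph either is balanced (hence equivalent to an unsigned graph, where Theorem~\ref{TH: Tutte mod-int} applies directly) or can be reduced, after switching, to a configuration with a single small unbalanced ``core'' through which all negativity passes. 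The plan is to show that a nowhere-zero $\mathbb{Z}_k$-flow on such a graph can be decomposed along balanced pieces, on each of which the unsigned equivalence upgrades the modulo flow to an integer flow, and then to reassemble these using the flexibility that $k=3$ or $k\geq 5$ provides (the excluded value $k=4$ is precisely where the required integer adjustments can fail).

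The main obstacle will be the reassembly across the unbalanced core. In the unsigned Tutte argument one uses that the modulo-flow values can be corrected by adding integer multiples of $k$ edge-by-edge while preserving the boundary condition; in the signed setting the presence of negative edges means that correcting flow values must respect the half-edge orientation constraints $\tau(h_e^u)\tau(h_e^v)=-\sigma(e)$, and near an unbalanced circuit the available corrections are more limited. I anticipate that the hypothesis $k\neq 4$ enters exactly here: for $k=3$ or $k\geq 5$ there is enough room in $\{\pm 1,\dots,\pm(k-1)\}$ to choose integer representatives with the correct parity and sign structure, whereas $k=4$ leaves a genuine gap (consistent with the signed Petersen graph example, which admits a $6$-flow but no $5$-flow). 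Concretely, I would isolate a smallest unbalanced subconfiguration, apply an extension lemma in the spirit of Lemma~\ref{extend k-flow} to handle the bounded-degree vertices where balanced and unbalanced regions meet, and verify that the modulo condition can be rounded to an exact integer $k$-flow. Making this rounding argument uniform across the two regimes $k=3$ and $k\geq 5$, while showing it breaks only at $k=4$, is the part I expect to require the most care.
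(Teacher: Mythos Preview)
Your reduction to the $3$-edge-connected case parallels the $6$-flow argument and is fine as far as it goes, but the gap is in your plan for that case. The paper does not attempt the ``decompose along balanced pieces and reassemble'' strategy you sketch; in fact it never argues via $3$-edge-connectivity for this theorem at all. Instead the three regimes are handled separately and by entirely different means.

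For $k\geq 6$ nothing further is needed: a nowhere-zero $\mathbb{Z}_k$-flow forces flow-admissibility, and Theorem~\ref{TH: 6-flow} already gives a nowhere-zero $6$-flow, hence a $k$-flow. For $k=3$ the paper invokes a result of Xu and Zhang on $\mathbb{Z}_3$-flows (together with bridgelessness from Lemma~\ref{LE: bridgeless}). The only case with real content is $k=5$, and it is deduced from the stronger Theorem~\ref{TH: mod flow-odd}: for every \emph{odd} $k\geq 3$, any nowhere-zero $\mathbb{Z}_k$-flow $(\tau,f_1)$ lifts to an integer $k$-flow $(\tau,f_2)$ with $f_2\equiv f_1\pmod{k}$. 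That theorem is proved not by cutting the graph but by a global source--sink argument: choose a nonnegative representative $(\tau,f)$ minimizing the total defect $\sum_v|\partial(\tau,f)(v)|$; show there are no sink vertices; from each source vertex follow directed flow to locate a \emph{tadpole} (an unbalanced closed ditrail attached by a positive dipath); use minimality and the absence of long barbells to force a single source $x$ with $\partial(\tau,f)(x)=k$; then reach a parity contradiction via Observation~\ref{OB: total defects}, since the boundary of every negative edge is even while the lone vertex-boundary $k$ is odd.

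Your ``single small unbalanced core'' intuition points in the right structural direction, but the actual mechanism---the tadpole construction plus the odd-$k$ parity obstruction---is what carries the proof, and it is not a local rounding in the spirit of Lemma~\ref{extend k-flow}. In particular, your proposal does not isolate why $k=5$ succeeds while $k=4$ fails: in the paper the distinction is exactly that the parity argument requires $k$ odd, whereas $k\geq 6$ bypasses the issue entirely through the $6$-flow theorem.
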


The ``if'' part of Theorem \ref{TH: mod flow} is trivial since every nowhere-zero $k$-flow is also a nowhere-zero $\mathbb{Z}_k$-flow in a signed graph. For the ``only if'' part of Theorem \ref{TH: mod flow}, by Lemma~\ref{LE: bridgeless}, the case of $k=3$ is an immediate corollary of a result about $\mathbb Z_3$-flow in \cite{Xu2005} and the case of $k\ge 6$ follows from Theorem \ref{TH: 6-flow}, and thus we only need to consider the case of $k=5$, which is a corollary of the following stronger result.

\begin{theorem}
\label{TH: mod flow-odd}
Let $k \geq 3$ be an odd integer and $(G,\sigma)$ be a signed graph with a nowhere-zero $\mathbb Z_k$-flow $(\tau, f_1)$. If $(G,\sigma)$ does not contain a long barbell, then there is a nowhere-zero $k$-flow $(\tau, f_2)$ such that $ f_1(e) \equiv f_2(e) \pmod{k}$.
\end{theorem}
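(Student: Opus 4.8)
The plan is to first produce \emph{some} integer flow congruent to $f_1$ modulo $k$ that uses the \emph{same} orientation $\tau$, and then to push its values into the range $\{-(k-1),\dots,k-1\}$ by repeatedly subtracting $k$ times a suitable $2$-flow, monitoring a monotone ``excess'' potential. The oddness of $k$ enters only in the first step, while the absence of long barbells is what makes the second step work; this clean separation also explains why $k=4$ (indeed any even $k$) must be excluded.

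\emph{Step 1 (lifting to an integer flow).} I may assume $(G,\sigma)$ is connected, arguing componentwise otherwise. Fixing $\tau$, choose integer representatives $\hat f$ of $f_1$ with $\hat f(e)\in\{1,\dots,k-1\}$. Then $\partial(\tau,\hat f)\equiv 0\pmod k$, so $\partial(\tau,\hat f)=k\beta$ for an integer function $\beta:V(G)\to\mathbb Z$. An integer flow $f_2\equiv f_1\pmod k$ with orientation $\tau$ exists if and only if $\beta$ lies in the image lattice $\Lambda=\partial(\tau,\mathbb Z^{E(G)})$, because then $f_2=\hat f-k c$ for an integer $c$ with $\partial(\tau,c)=\beta$. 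After switching so that a spanning tree is all-positive, $\Lambda$ is generated by the vectors $e_u-e_v$ (positive edges) and $\pm(e_u+e_v)$ (negative edges); hence for a connected \emph{unbalanced} signed graph $\Lambda=\{x\in\mathbb Z^{V(G)}:\sum_v x(v)\text{ is even}\}$, whereas a balanced graph is the unsigned case and I may invoke Theorem~\ref{TH: Tutte mod-int} directly. It remains to check that $\sum_v\beta(v)$ is even. A positive edge contributes $\tau(h_e^u)+\tau(h_e^v)=0$ to $\sum_v\partial(\tau,\hat f)(v)$, while a negative edge contributes $\pm 2\hat f(e)$; thus $k\sum_v\beta(v)=2S$ for some $S\in\mathbb Z$. \textbf{This is exactly where oddness is used:} since $\gcd(2,k)=1$, the divisibility $k\mid 2S$ forces $k\mid S$, so $\sum_v\beta(v)=2S/k$ is even and $\beta\in\Lambda$. (For even $k$ one only obtains $\tfrac k2\mid S$, so $\sum_v\beta(v)$ may be odd and no integer lift need exist, which is the obstruction underlying the $k=4$ counterexample.)

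\emph{Step 2 (reducing the range).} Let $g:=f_2$ be the flow from Step~1; it is nowhere-zero because $g\equiv f_1\pmod k$ and $f_1$ is. Reversing both half-edges of an edge keeps $\tau$ a legal orientation and flips the sign of $g$ there, so I may assume $g(e)>0$ for all $e$. Define the excess $\Phi(g)=\sum_{e}\max\bigl(|g(e)|-(k-1),\,0\bigr)\ge 0$. If $\Phi(g)=0$ then $g$ is a $k$-flow and we are done; otherwise some $e_0$ has $g(e_0)\ge k$, and since $g(e_0)\not\equiv 0\pmod k$ in fact $g(e_0)\ge k+1$. Because $(G,\sigma)$ has no long barbells, the support of the positive flow $g$ admits a conformal decomposition into $2$-flows whose supports are balanced circuits or short barbells (the signed analogue of Theorem~\ref{TH: sum Tutte}); equivalently, $g$ contains a sign-aligned signed-circuit $2$-flow $g_C$ with $e_0\in\supp(g_C)$. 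Replacing $g$ by $g-k\,g_C$ preserves $\tau$, preserves the residue modulo $k$, and strictly decreases $\Phi$: on any edge with $|g(e)|\le k-1$ one has $|g(e)-k|\le k-1$, so its excess stays $0$, while on every edge of $\supp(g_C)$ with $|g(e)|\ge k+1$ — in particular $e_0$ — the excess strictly drops. Re-orienting to make the new flow positive and iterating, the nonnegative integer $\Phi$ strictly decreases at each step, so after finitely many steps I reach a nowhere-zero $k$-flow $(\tau,f_2)$ with $f_2\equiv f_1\pmod k$.

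\emph{Main obstacle.} The delicate point is Step~1: identifying $\Lambda$ as the even-sum lattice and controlling the parity of $\sum_v\beta(v)$, which is precisely where odd $k$ is indispensable. The second potential difficulty is the existence of the sign-aligned $2$-flow $g_C$ through an oversaturated edge; this is where the hypothesis that $(G,\sigma)$ has no long barbells is essential, since it guarantees that the signed circuits available for a conformal decomposition of $g$ are all $2$-flows (balanced circuits and short barbells) and never $3$-flow long barbells. Establishing this conformal decomposition carefully — the signed-graph extension of Little--Tutte--Younger — is the technical heart of the argument, and I expect it to be the step requiring the most work.
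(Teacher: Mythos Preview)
Your two–step plan is sound and genuinely different from the paper's argument. The paper does not separate ``lift, then reduce''; instead it runs a single extremal argument: starting from the given $\mathbb Z_k$-flow it applies switching and ``minusing'' to reach a triple $(G,\tau,f)$ with $0<f<k$, $\partial(\tau,f)\equiv 0\pmod k$, and $\sum_v|\partial(\tau,f)(v)|$ minimal (with a tie-break on the number of source vertices). It then analyses the directed structure seen from a source vertex via diwalks and ``tadpoles'' (a dipath into a closed negative ditrail), uses the no-long-barbell hypothesis to force the set of source vertices to be a singleton~$\{x\}$, and finishes with the parity contradiction that $\partial(\tau,f)(x)=k$ is odd while the total boundary over $V\cup E$ must be even. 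Your scheme puts the parity of $k$ entirely into the lifting step (your lattice computation of $\Lambda$ and of $\sum_v\beta(v)$ is correct), and puts the no-long-barbell hypothesis entirely into the range-reduction step; this clean separation is a genuine advantage and gives the same transparent explanation of why even $k$ fails.

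The one point to watch is circularity. The ``signed analogue of Theorem~\ref{TH: sum Tutte}'' that you plan to invoke is exactly Theorem~\ref{TH: sum LBB-free} of the paper, and the paper's proof of that theorem \emph{uses} Theorem~\ref{TH: mod flow-odd} (in the even-$k$ step). So you cannot simply cite it. Fortunately you only need the much weaker statement that through any edge $e_0$ of a positive flow $g$ there is a non-negative $2$-flow $g_C\le g$, and this follows without Theorem~\ref{TH: mod flow-odd}: take a minimal positive flow $h\le g$ with $e_0\in\supp(h)$; by Lemma~\ref{LE: balanced odd} (whose proof is independent of Theorem~\ref{TH: mod flow-odd}) the odd-value edge set of $h$ supports a $\{\pm1\}$-valued $2$-flow $g_Q$, and then both $(h\pm g_Q)/2$ are non-negative integer flows summing to $h$, so minimality forces one of them to vanish and hence $h$ itself is a $\{0,1\}$-valued $2$-flow. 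With this lemma in hand your excess-potential argument goes through verbatim: subtracting $k\,g_C$ keeps the residues mod $k$, leaves edges with $|g(e)|\le k-1$ at excess $0$, and strictly drops the excess at $e_0$, so $\Phi$ decreases to~$0$ in finitely many steps.
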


In order to prove Theorem~\ref{TH: mod flow-odd}, we introduce some new concepts.

\begin{definition}
\label{DEF: diwalk}
Let $W=x_0e_1x_1e_2x_2 \dots e_{t-1}x_{t-1}e_tx_t$ be a signed walk with an orientation $\tau$.

(1) $W$ is called a
{\em diwalk} from $x_0$ to $x_t$ if $\tau(h_{e_1}^{x_0})=1$ and $\tau(h_{e_i}^{v_i})+\tau(h_{e_{i+1}}^{v_i})=0$ for each $i \in \{1, \dots, t-1\}$.

(2) The diwalk $W$ from $x_0$ to $x_t$ is {\em positive} if $\tau(h_{e_{t}}^{x_t})=-1$. Otherwise, it is {\em negative}.

(3) A diwalk is {\em all-positive} if all its edges are positive.

(4) A {\em ditrail} from $x$ to $y$ is a diwalk from $x$ to $y$
 without repeated edges.

(5) A {\em dipath} from $x$ to $y$ is a diwalk from $x$ to $y$
 without repeated vertices (see Figure \ref{FIG: dipath}).
\end{definition}


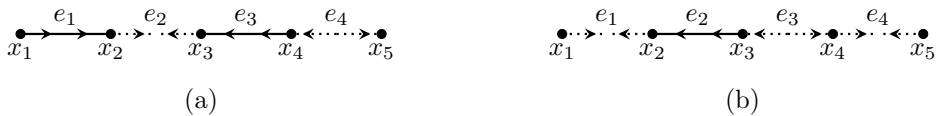
\begin{figure}[h]
\begin{center}
\begin{tikzpicture}[scale=0.6]

\path(-10,1) coordinate (x1);\draw [fill=black] (x1) circle (0.1cm);
\path(-8,1) coordinate (x2);\draw [fill=black] (x2) circle (0.1cm);
\path(-6,1) coordinate (x3);\draw [fill=black] (x3) circle (0.1cm);
\path(-4,1) coordinate (x4);\draw [fill=black] (x4) circle (0.1cm);
\path(-2,1) coordinate (x5);\draw [fill=black] (x5) circle (0.1cm);

\draw [redirected,directed,line width=0.85] (x4)--(x3);
\draw [redirected,directed,line width=0.85] (x1)--(x2);
\draw [directed,dotted,line width=0.85] (-3,1)--(x4);
\draw [directed,dotted,line width=0.85] (-3,1)--(x5);
\draw [directed,dotted,line width=0.85] (x2)--(-7,1);
\draw [directed,dotted,line width=0.85] (x3)--(-7,1);

\node[below] at (-10,1){$x_1$};
\node[below] at (-8,1){$x_2$};
\node[below] at (-6,1){$x_3$};
\node[below] at (-4,1){$x_4$};
\node[below] at (-2,1){$x_5$};
\node[below] at (-6,0){(a)};

\path(10,1) coordinate (x11);\draw [fill=black] (x11) circle (0.1cm);
\path(8,1) coordinate (x21);\draw [fill=black] (x21) circle (0.1cm);
\path(6,1) coordinate (x31);\draw [fill=black] (x31) circle (0.1cm);
\path(4,1) coordinate (x41);\draw [fill=black] (x41) circle (0.1cm);
\path(2,1) coordinate (x51);\draw [fill=black] (x51) circle (0.1cm);

\draw [redirected,directed,line width=0.85] (x31)--(x41);
\draw [directed,dotted,line width=0.85] (x11)--(9,1);
\draw [directed,dotted,line width=0.85] (x21)--(9,1);
\draw [directed,dotted,line width=0.85] (x41)--(3,1);
\draw [directed,dotted,line width=0.85] (x51)--(3,1);
\draw [directed,dotted,line width=0.85] (7,1)--(x21);
\draw [directed,dotted,line width=0.85] (7,1)--(x31);

\node[below] at (10,1){$x_5$};
\node[below] at (8,1){$x_4$};
\node[below] at (6,1){$x_3$};
\node[below] at (4,1){$x_2$};
\node[below] at (2,1){$x_1$};
\node[below] at (6,0){(b)};

\node[above] at (9,1){$e_4$};
\node[above] at (7,1){$e_3$};
\node[above] at (5,1){$e_2$};
\node[above] at (3,1){$e_1$};

\node[above] at (-9,1){$e_1$};
\node[above] at (-7,1){$e_2$};
\node[above] at (-5,1){$e_3$};
\node[above] at (-3,1){$e_4$};

\end{tikzpicture}
\end{center}
\caption{\small\it (a) A positive dipath from $x_1$ to $x_5$; (b) A negative dipath from $x_1$ to $x_5$.}
\label{FIG: dipath}
\end{figure}


\begin{definition}
\label{DEF: tadpole}
An oriented signed graph is called a
{\em tadpole} with tail end $x$ (see Figure \ref{FIG: tadpole}) if

(1) it consists of a ditrail $C$ and a dipath $P$ with
$V(C) \cap V(P) = \{ v_1\}$;

(2) $P$ is a positive dipath from $x$ to $v_1$;

(3) $C$ is a closed negative ditrail from $v_1$ to $v_1$.
\end{definition}


\begin{figure}[h]
\begin{center}
\begin{tikzpicture}[scale=0.6]

\path(-8,0) coordinate (x1);\draw [fill=black] (x1) circle (0.1cm);
\path(-6,0) coordinate (x2);\draw [fill=black] (x2) circle (0.1cm);
\path(-4,0) coordinate (x3);\draw [fill=black] (x3) circle (0.1cm);
\path(-2,0) coordinate (x4);\draw [fill=black] (x4) circle (0.1cm);
\path(0,0) coordinate (x5);\draw [fill=black] (x5) circle (0.1cm);

\draw [redirected,directed,line width=0.85] (x4)--(x3);
\draw [redirected,directed,line width=0.85] (x1)--(x2);
\draw [directed,dotted,line width=0.85] (-1,0)--(x4);
\draw [directed,dotted,line width=0.85] (-1,0)--(x5);
\draw [directed,dotted,line width=0.85] (x2)--(-5,0);
\draw [directed,dotted,line width=0.85] (x3)--(-5,0);

\path(1,2) coordinate (y1);\draw [fill=black] (y1) circle (0.1cm);
\path(3,2) coordinate (y2);\draw [fill=black] (y2) circle (0.1cm);
\path(5,1) coordinate (y3);\draw [fill=black] (y3) circle (0.1cm);
\path(1,-2) coordinate (z1);\draw [fill=black] (z1) circle (0.1cm);
\path(3,-2) coordinate (z2);\draw [fill=black] (z2) circle (0.1cm);
\path(5,-1) coordinate (z3);\draw [fill=black] (z3) circle (0.1cm);

\draw [redirected,directed,line width=0.85] (x5)--(y1);
\draw [redirected,directed,line width=0.85] (y1)--(y2);
\draw [redirected,directed,line width=0.85] (y2)--(y3);

\draw [redirected,directed,line width=0.85] (x5)--(z1);
\draw [redirected,directed,line width=0.85] (z1)--(z2);
\draw [redirected,directed,line width=0.85] (z2)--(z3);

\draw [directed,dotted, line width=1] (y3)--(5,0);
\draw [directed,dotted, line width=1] (z3)--(5,0);

\node[below] at (-8,0){\small $x$};
\node[below] at (-0.25,0){\small $v_1$};
\node[below] at (-4,-0.35){\small $P$};
\node[below] at (2.5,0.15){\small $C$};

\end{tikzpicture}
\end{center}
\caption{\small\it  A tadpole with tail end $x$.}
\label{FIG: tadpole}
\end{figure}
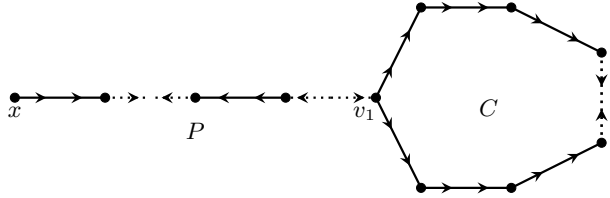


Note that it is possible that
$x=v_1$ in the above definition. In this case, the tadpole is called a {\em tailless tadpole}.

\begin{definition}
Let $(G,\tau)$ be an oriented signed graph and $f: E(G) \rightarrow \mathbb{R}$.

(1) A vertex $x $ is a {\em source} (resp., {\em sink}) of $(\tau, f)$ if $\partial (\tau,f)(x)> 0$ (resp., $\partial (\tau,f)(x)< 0$).

(2) An edge $e$ is a {\em source} (resp., {\em sink}) of $(\tau, f)$ if the boundary at $e$, $\partial (\tau, f)(e) = -(\tau(h_1)+\tau(h_2))f(e)$, is positive (resp., negative), where $h_1$ and $h_2$ are the two half-edges of $e$.
\end{definition}

Note that an edge is a source or a sink if and only if it is negative. A sink is either a sink vertex or a sink edge and a source is either a source vertex or a source edge.

\begin{observation}
\label{OB: total defects}
Let $(G,\tau)$ be an oriented signed graph and $f: E(G) \rightarrow \mathbb{R}$.
The total sum of boundaries on $V(G)\cup E(G)$ is zero. In particular, if $f$ is a flow, then the total sum of the boundaries on $E(G)$ is zero.
\end{observation}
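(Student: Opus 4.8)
The plan is to prove both assertions by a single double-counting rearrangement of the defining sums. The key point I would exploit is that the vertex boundaries and the edge boundaries are assembled from the very same quantities $\tau(h)f(e_h)$ attached to the half-edges of $(G,\tau)$, but that collecting them vertex-by-vertex versus edge-by-edge produces these contributions with opposite signs. So everything should cancel identically, with no use of any structural hypothesis on $(G,\sigma)$.

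First I would expand the total vertex boundary. Since every half-edge $h\in H(G)$ is incident with exactly one vertex, summing $\partial(\tau,f)(v)=\sum_{h\in H(v)}\tau(h)f(e_h)$ over all $v\in V(G)$ is the same as summing $\tau(h)f(e_h)$ over all of $H(G)$. Regrouping the half-edges by the edge they belong to, and writing $h_e^u,h_e^v$ for the two half-edges of an edge $e=uv$, this gives
$$\sum_{v\in V(G)}\partial(\tau,f)(v)=\sum_{e=uv\in E(G)}\bigl(\tau(h_e^u)+\tau(h_e^v)\bigr)f(e).$$
Next I would expand the total edge boundary. By definition $\partial(\tau,f)(e)=-(\tau(h_1)+\tau(h_2))f(e)$, where $h_1,h_2$ are the two half-edges of $e$, so summing over all edges yields precisely the negative of the expression just obtained. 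Adding the two totals, the contribution of every edge cancels term by term, whence the sum of the boundaries over $V(G)\cup E(G)$ is zero; this is the first claim. For the ``in particular'' statement, if $f$ is a flow then $\partial(\tau,f)(v)=0$ for every $v\in V(G)$, so the total vertex boundary vanishes, and the identity just established forces $\sum_{e\in E(G)}\partial(\tau,f)(e)=0$ as well.

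I do not anticipate any real obstacle, since this is a bookkeeping identity rather than a substantive theorem. The only step meriting care is the half-edge accounting: I must verify that each half-edge is counted exactly once when the vertex sum is reindexed over $H(G)$ and then regrouped over $E(G)$. This is immediate because each edge contributes precisely two half-edges, one at each endpoint, which is exactly what makes the coefficient of $f(e)$ equal to $\tau(h_e^u)+\tau(h_e^v)$ in the vertex total and its negative in the edge total.
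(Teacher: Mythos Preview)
Your proof is correct. The paper itself does not supply a proof of this observation; it is stated as self-evident and used without justification, so your double-counting argument is exactly the kind of routine verification the authors implicitly rely on.
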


The following is a trivial fact in network theory.

\begin{observation}
\label{OB: source to sink}
Let $(G,\tau)$ be an oriented signed graph and $f: E(G) \rightarrow \mathbb{R}^+\cup \{0\}$.
For each source $x$, there must exist a sink $t_x$ such that there is an all-positive dipath from $x$ to $t_x$.
\end{observation}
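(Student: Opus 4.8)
The plan is to prove this by a reachability argument together with a conservation identity across a cut. Since $f$ is nonnegative, I restrict attention to the positive edges of $(G,\sigma_{\tau})$ with their induced orientation; because $\tau(h_e^u)\tau(h_e^v)=-1$ for a positive edge $e=uv$, exactly one of its half-edges points away from its end, so the positive edges form an ordinary digraph whose directed paths are precisely the all-positive dipaths. Let $R\subseteq V(G)$ be the set of all vertices reachable from $x$ by an all-positive dipath; if the source $x$ is an edge rather than a vertex, take $R$ to consist of the vertices reachable from the two ends of $x$, into which the source edge feeds flow. I will show that some sink is reachable from $x$, that is, that $R$ contains a sink vertex or meets a sink edge.

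Suppose to the contrary that no $v\in R$ is a sink and that no edge with an end in $R$ is a sink edge. Consider the quantity
$$B=\sum_{v\in R}\partial(\tau,f)(v)+\sum_{e\in E(G[R])}\partial(\tau,f)(e).$$
Grouping the half-edge contributions of the first sum by edges, every edge with both ends in $R$ contributes $(\tau(h_1)+\tau(h_2))f(e)$ to the first sum and $-(\tau(h_1)+\tau(h_2))f(e)$ to the second, so these cancel and only the edges of the cut survive:
$$B=\sum_{e\in\delta_G(R)}\tau(h_e^{R})\,f(e),$$
where $h_e^{R}$ denotes the half-edge of $e$ at its end in $R$. On the one hand, every term of the original expression for $B$ is nonnegative by our assumption, except the contribution of the source $x$ itself, which lies in $R$ (as a vertex, or as an internal edge) and is strictly positive; hence $B>0$.

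On the other hand, I claim each term of the cut expression is $\le 0$. A positive edge of $\delta_G(R)$ cannot have its $R$-end as tail, for then its other end would be reachable and hence lie in $R$; thus it points into $R$ and contributes $-f(e)\le 0$. A negative edge $e$ of $\delta_G(R)$ has equally signed half-edges, so $\partial(\tau,f)(e)=-2\tau(h_e^{R})f(e)$; since $e$ is not a sink this is nonnegative, giving $\tau(h_e^{R})f(e)\le 0$. Hence $B\le 0$, contradicting $B>0$, and a reachable sink $t_x$ must exist. The only delicate point is the sign bookkeeping for negative edges: such an edge behaves as inflow or as outflow simultaneously at its two equally signed ends while also being a source or a sink in its own right, and one must translate ``$e$ is not a sink'' into the inequality $\tau(h_e^{R})f(e)\le 0$ consistently; the cancellation of the internal edges in $B$ is exactly what lets the source-edge case be treated on the same footing as the source-vertex case.
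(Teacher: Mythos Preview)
Your argument is correct. The paper does not prove this observation at all; it merely records it as ``a trivial fact in network theory,'' so there is no proof in the paper to compare against. What you have written is precisely the standard reachability--plus--cut argument from ordinary network flow theory, correctly adapted to the signed setting: the positive edges under $\tau$ form an ordinary digraph, the reachable set $R$ is closed under outgoing positive edges, and the identity
\[
\sum_{v\in R}\partial(\tau,f)(v)+\sum_{e\in E(G[R])}\partial(\tau,f)(e)=\sum_{e\in\delta_G(R)}\tau(h_e^{R})\,f(e)
\]
gives the contradiction. The one genuinely new ingredient over the unsigned case is the handling of negative edges on the cut, and your translation of ``$e$ is not a sink edge'' into $\tau(h_e^R)f(e)\le 0$ is exactly right, since a sink edge has both half-edges equal to $+1$ when $f\ge 0$. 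Your treatment of the case where the source $x$ is itself an edge (placing both endpoints into $R$ so that $x\in E(G[R])$) is also the natural reading of the statement, though in the paper this observation is only ever invoked with $x$ a source \emph{vertex}.
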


\begin{definition}
\label{DEF: minus}
Let $(G,\tau)$ be an oriented signed graph, $E_0\subseteq E(G)$, and $f: E(G) \to \mathbb{Z}_k$ be a mapping. The operation {\em minusing} of $(\tau,f)$ on $E_0$ is done by reversing the directions of both half-edges of $e$ and changing $f(e)$ to $k-f(e)$ for every $e\in E_0$. The resulting pair obtained from $(\tau,f)$ is denoted by $(\tau_{\widetilde{E}_0},f_{\widetilde{E}_0})$.
\end{definition}

We are ready to prove Theorem~\ref{TH: mod flow-odd}.

\medskip
\noindent
{\bf Proof of Theorem~\ref{TH: mod flow-odd}.}
Let $(G_0,\sigma_0)$ be a counterexample and $(\tau_0,f_1)$ be a nowhere-zero $\mathbb Z_k$-flow of $(G_0,\sigma_0)$.
We can choose a triple $(G,\tau,f)$ obtained from $(G_0,\tau_0,f_1)$ by a sequence of switching and minusing operations such that
\begin{itemize}
 \item[(S1)] $0<f(e)<k$ for $e\in E(G)$;

 \item[(S2)] Subject to (S1), $\partial (\tau, f)(v)\equiv 0 \pmod k$ for $v\in V(G)$;

\item[(S3)] Subject to (S1) and (S2), $\eta(\tau,f)=\sum_{v\in V(G)}|\partial (\tau, f)(v)|$ is as small as possible;

\item[(S4)] Subject to (S1), (S2) and (S3),
 the number of source vertices of $(\tau,f)$ is as large as possible.
\end{itemize}

\medskip
Let $X$ be the set of source vertices of $(\tau,f)$.

\begin{claim}
\label{cl: no sink vertex}
$X=\{x\in V(G) : \partial(\tau,f) (x)\neq 0)\}$. That is, there is no sink vertices in $(\tau,f)$.
\end{claim}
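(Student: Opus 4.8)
The plan is to prove the claim by contradiction, exploiting the optimality condition (S4) together with the fact that switching is one of the operations allowed in selecting the triple $(G,\tau,f)$. Since every source vertex trivially has nonzero boundary, we always have $X\subseteq\{x:\partial(\tau,f)(x)\neq 0\}$, so it suffices to rule out sink vertices, i.e.\ vertices $t$ with $\partial(\tau,f)(t)<0$. I would assume for contradiction that such a $t$ exists and then switch at $t$.

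The key step is to determine precisely how switching at $t$ changes the boundaries. Switching at $t$ reverses the sign of every edge incident with $t$; in terms of the orientation this is realized by flipping $\tau$ on exactly the half-edges in $H(t)$ (the half-edge at the other endpoint of each such edge is left untouched), while the flow values $f$ are unchanged. Reading off the defining formula $\partial(\tau,f)(v)=\sum_{h\in H(v)}\tau(h)f(e_h)$ gives two facts: first, the boundary at $t$ becomes $\sum_{h\in H(t)}(-\tau(h))f(e_h)=-\partial(\tau,f)(t)$; second, for every $v\neq t$ the half-edges in $H(v)$ are unaffected, so $\partial(\tau,f)(v)$ is unchanged. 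Denote by $(\tau',f)$ the resulting triple, which is again reachable from $(G_0,\tau_0,f_1)$ by switching and minusing.

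With this computation in hand the contradiction is immediate. Since $f$ is untouched, (S1) still holds; since negating a multiple of $k$ leaves it a multiple of $k$, (S2) still holds; and since $\eta=\sum_{v\in V(G)}|\partial(\tau,f)(v)|$ only sees the absolute values of the vertex boundaries, and only $\partial(t)$ has changed (to its negative, of the same absolute value), $\eta$ is unchanged and (S3) still holds. Finally, because $\partial(\tau,f)(t)<0$ we have $\partial(\tau',f)(t)>0$, so $t$ is now a source, while every previous source vertex remains a source (all other boundaries are unchanged). Thus $(\tau',f)$ satisfies (S1)--(S3) and has strictly more source vertices than $(\tau,f)$, contradicting (S4). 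Hence no sink vertex exists and $X=\{x:\partial(\tau,f)(x)\neq 0\}$.

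The only delicate point, and the step I would write out most carefully, is the half-edge bookkeeping showing that switching at $t$ negates exactly $\partial(\tau,f)(t)$ and nothing else; everything after that is a one-line check against the four optimality conditions. I note in particular that switching may well turn edge-sources into edge-sinks and conversely, but since (S3) and (S4) are phrased solely in terms of vertex boundaries and source vertices, these edge-level changes are irrelevant to the argument.
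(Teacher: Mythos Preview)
Your proof is correct and follows exactly the paper's approach: assume a sink vertex exists, switch at it (reversing all half-edges in $H(t)$) to negate its boundary while leaving all other vertex boundaries unchanged, and observe that (S1)--(S3) are preserved while the number of source vertices strictly increases, contradicting (S4). Your version is simply more explicit about the half-edge bookkeeping than the paper's two-line argument.
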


\begin{proof}
Suppose to the contrary that there is a vertex $v\in V(G)$ such that $\partial (\tau, f)(v)< 0$. Let $(G',\tau')$  be the resulting oriented signed graph obtained from $(G,\tau)$ by  switching at $v$ and let $X'=X\cup \{v\}$. Note that switching at $v$ is done by reversing all directions of half-edges in $H_G(v)$. Thus $(G',\tau', f)$ satisfies (S1)$\sim$(S3) and $X'$ is the set of source vertices of $(\tau',f)$. This contradicts (S4).
\end{proof}

\begin{claim}
\label{cl: source nonempty}
$X\neq \emptyset$.
\end{claim}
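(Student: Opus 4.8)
The plan is to argue by contradiction: I would assume $X=\emptyset$ and show that then $(\tau,f)$ is already a nowhere-zero integer $k$-flow, which contradicts the choice of $(G_0,\sigma_0)$ as a counterexample to Theorem~\ref{TH: mod flow-odd}. The point is that the set $X$ of source vertices can only be empty if $(\tau,f)$ has zero boundary everywhere, and once $(\tau,f)$ is an honest flow the minimality/optimality apparatus collapses into an integer flow.

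Concretely, assuming $X=\emptyset$, Claim~\ref{cl: no sink vertex} rules out sink vertices as well, so $\partial(\tau,f)(v)=0$ for every $v\in V(G)$; hence $(\tau,f)$ is a flow of $(G,\sigma_{\tau})$. Condition (S1) guarantees that $f$ is integer-valued with $0<f(e)<k$, so $1\le|f(e)|\le k-1$ for every edge $e$, and the support of $f$ is all of $E(G)$. By Definition~\ref{DE: Flow}(5) these two facts together say precisely that $(\tau,f)$ is a nowhere-zero $k$-flow of $(G,\sigma_{\tau})$. I would then transport this integer flow back to the original graph: the triple $(G,\tau,f)$ arose from $(G_0,\tau_0,f_1)$ by switchings and minusings, and undoing these operations should produce a nowhere-zero $k$-flow $(\tau_0,f_2)$ of $(G_0,\sigma_0)$ with $f_2(e)\equiv f_1(e)\pmod{k}$, contradicting the fact that $(G_0,\sigma_0)$ is a counterexample and thereby forcing $X\neq\emptyset$.

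The routine part is the boundary bookkeeping of the previous paragraph; the step that needs the most care is the transport-back, namely verifying that switching and minusing preserve the underlying $\mathbb{Z}_k$-flow so that the recovered integer flow really is congruent to $f_1$ modulo $k$. Switching is handled by the already-recorded fact that switching does not change flows (and replaces the signed graph only by an equivalent one). Minusing on an edge $e$ reverses both half-edges and replaces $f(e)$ by $k-f(e)$; since $k-f(e)\equiv-f(e)\pmod{k}$ and reversing an edge negates its contribution to every boundary, the value $k-f(e)$ carried in the reversed direction represents the same $\mathbb{Z}_k$-flow element as $f(e)$ in the original direction. Each operation is therefore reversible, keeps all values in $\{1,\dots,k-1\}$, and fixes the represented $\mathbb{Z}_k$-flow, so inverting the whole sequence returns a nowhere-zero $k$-flow with the desired congruence. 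I note that this argument uses neither the oddness of $k$ nor the absence of long barbells, so Claim~\ref{cl: source nonempty} holds in full generality, and the hypotheses of Theorem~\ref{TH: mod flow-odd} will only be needed in the later claims that reduce $\eta(\tau,f)$.
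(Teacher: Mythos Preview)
Your proof is correct and follows essentially the same approach as the paper's: both assume $X=\emptyset$, deduce (via Claim~\ref{cl: no sink vertex} and (S1)) that $(\tau,f)$ is a nowhere-zero integer $k$-flow, and then transport it back to $(G_0,\tau_0)$ to contradict the counterexample hypothesis. The paper makes the transport-back slightly more explicit by defining $f'(e)=-f(e)$ on the minused edge set $E_0$ (negation rather than $k-\cdot$) so that $(\tau_1,f')$ is an integer flow under the pre-minusing orientation with $f'\equiv f_1\pmod{k}$; your phrase ``inverting the whole sequence'' should be read in exactly this sense, not as re-applying minusing, which would merely recover the original $\mathbb{Z}_k$-flow $f_1$.
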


\begin{proof}
Suppose  $X=\emptyset$. Then $(\tau,f)$ is a nowhere-zero $k$-flow of the signed graph $(G,\sigma)$. Since $(G,\tau,f)$ is obtained from $(G_0,\tau_0,f_1)$ by a sequence of switching and minusing operations, there are $V_0\subseteq V(G_0)$, $E_0\subseteq E(G_0)$ and an orientation $\tau_1$ of $(G,\sigma)$ such that $(G,\tau_1)$ is obtained from $(G_0,\tau_0)$ by switching on $V_0$ and $(\tau,f)$ is obtained from $(\tau_1,f_1)$ by minusing on $E_0$. Note that $V(G)=V(G_0)$ and $E(G)=E(G_0)$. Let $f': E(G)\to \mathbb{Z}$ be defined as follows,
$$
f'(e)= \left\{
\begin{array}{rl}
f(e) & \mbox{ if $e\notin E_0$};\\
-f(e) & \mbox{ if $e\in E_0$}.
\end{array}
\right.
$$
Since $(\tau,f)$ is a nowhere-zero $k$-flow of $(G,\sigma)$ and is obtained from $(\tau_1,f_1)$ by minusing on $E_0$, $(\tau_1,f')$ is also a nowhere-zero $k$-flow of $(G,\sigma)$ and satisfies  $f'(e)\equiv f_1(e) \pmod k$ for every $e\in E(G)$. Thus $(\tau_0,f')$ is a desired nowhere-zero $k$-flow of $(G_0,\sigma|_{E(G_0)})$ since $(G,\tau_1)$ is obtained from $(G_0,\tau_0)$ by switching on $V_0$. This contradicts that $(G_0,\sigma|_{E(G_0)})$ is a counterexample.
\end{proof}

By (S2), every vertex $x$ in $X$ satisfies
$$\partial (\tau, f)(x) = \mu k$$ for some positive integer $\mu$.

\begin{claim}
\label{cl: no negative ditrail}
 There is no negative ditrail of $(G,\tau)$ between two distinct vertices in $X$.
\end{claim}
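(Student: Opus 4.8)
The plan is to argue by contradiction using the minimality condition (S3). Suppose there exists a negative ditrail $T = x_0 e_1 x_1 \cdots e_t x_t$ with $x_0 = x$ and $x_t = y$, where $x, y \in X$ are distinct. The idea is to apply the minusing operation to \emph{all} edges of $T$ simultaneously and to show that the resulting pair still satisfies (S1) and (S2) but has strictly smaller $\eta$, contradicting the minimality in (S3).

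First I would record the local effect of minusing a single edge $e$ on the boundary. Minusing reverses both half-edges and replaces $f(e)$ by $k-f(e)$, so the contribution of $e$ to the boundary at an endpoint $u$ changes from $\tau(h_e^u)f(e)$ to $-\tau(h_e^u)\bigl(k-f(e)\bigr)$, that is, by $-\tau(h_e^u)\,k$, a multiple of $k$. Consequently minusing preserves (S2); and since $f(e)\in\{1,\dots,k-1\}$ gives $k-f(e)\in\{1,\dots,k-1\}$, it also preserves (S1). Thus the pair obtained by minusing all edges of $T$ is again reachable from $(G_0,\tau_0,f_1)$ by switching and minusing and still satisfies (S1) and (S2).

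Next I would compute the total boundary change after minusing every edge of $T$. Regrouping the half-edges of $T$ according to the vertex they meet, every internal position $i$ (with $1\le i\le t-1$) contributes the pair $h_{e_i}^{x_i}, h_{e_{i+1}}^{x_i}$, whose orientations cancel by the diwalk condition $\tau(h_{e_i}^{x_i})+\tau(h_{e_{i+1}}^{x_i})=0$; hence each such pair produces no net change, even when $T$ revisits a vertex. The only unpaired half-edges are $h_{e_1}^{x_0}$ and $h_{e_t}^{x_t}$, both of orientation $+1$ — the former by the definition of a diwalk, the latter precisely because $T$ is \emph{negative}. Therefore $\partial(\tau,f)(x)$ and $\partial(\tau,f)(y)$ each decrease by exactly $k$, while the boundary at every other vertex is unchanged.

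Finally I would invoke Claim~\ref{cl: no sink vertex} together with the observation that each source has boundary $\mu k$ for some positive integer $\mu$. Since there are no sink vertices, $\eta(\tau,f)=\sum_{v\in V(G)}\partial(\tau,f)(v)$, and after minusing the values $\partial(\tau,f)(x)=\mu_x k$ and $\partial(\tau,f)(y)=\mu_y k$ drop to $(\mu_x-1)k\ge 0$ and $(\mu_y-1)k\ge 0$ respectively; all boundaries remain nonnegative, so $\eta$ decreases by $2k$. This contradicts (S3) and proves the claim. I expect the only delicate point to be the bookkeeping when $T$ repeats vertices (including possibly $x$ or $y$): one must verify that every repeated visit is a genuine pass-through contributing a cancelling pair, so that the net boundary change is concentrated at the two endpoints and equals $-k$ at each. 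Note that oddness of $k$ is not used here; it will enter only in the subsequent analysis of tadpoles.
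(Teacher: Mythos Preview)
Your argument is correct and follows exactly the paper's approach: minuse along the negative ditrail and observe that $\eta$ drops by $2k$, contradicting (S3). You have simply spelled out the computation that the paper summarizes as ``it is not difficult to check,'' including the correct handling of repeated vertices along the ditrail.
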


\begin{proof} Suppose to the contrary that $X$ contains two distinct vertices $x_1$ and $x_2$ such that there exists a negative ditrail $P$ from $x_1$ to $x_2$ in $(G,\tau)$.
By the definition of negative ditrails
 (see Definition~\ref{DEF: diwalk})
 and by Definition~\ref{DEF: minus},
it is not difficult to check that
$$
\eta(\tau_{\widetilde{E(P)}},f_{\widetilde{E(P)}})=\sum_{i=1}^2(\partial (\tau, f)(x_i)-k)+\sum_{v\in V(G)\setminus \{x_1,x_2\}}\partial (\tau, f)(v)=\eta(\tau,f)-2k.
$$
This contradicts (S3).
\end{proof}

Pick an arbitrary vertex $x$ from $X$ by Claim \ref{cl: source nonempty} and let
\begin{eqnarray*}
&& Y_x^+=\{y\in V(G) : \mbox{$(G,\tau)$ contains a positive dipath from $x$ to $y$}\}, \\
&& Y_x^-=\{y\in V(G)\setminus Y_x^+ : \mbox{$(G,\tau)$ contains a negative dipath from $x$ to $y$}\}, \mbox{ and}\\
&& Y_x=Y_x^+\cup Y_x^-.
\end{eqnarray*}

By Claim \ref{cl: no negative ditrail}, $Y_x^-\cap X=\emptyset$, so $\partial (\tau, f)(y)=0$ for each $y\in Y_x^-$.
Switch at every vertex in $Y_x^-$ and denote the resulting pair obtained from $(G,\tau)$ by $(G_1,\tau_1)$. Then $(G_1,\sigma_{\tau_1})$ is equivalent to $(G,\sigma_{\tau})$ and $\tau_1$ is an orientation of $(G_1,\sigma_{\tau_1})$. Since $\partial (\tau, f)(y)=0$ for $y\in Y_x^-$, it is easy to see that the triple $(G_1, \tau_1,f)$ also satisfies (S1)$\sim$(S4). Moreover, by the definitions of $Y_x^+$ and $Y_x^-$, $(G_1,\tau_1)$ contains a positive dipath from $x$ to $y$ for every $y\in Y_x$.
Without loss of generality, we can assume
\begin{equation}
Y_x^-=\emptyset ~~\mbox{and} ~~ Y_x=Y_x^+,
\label{EQ: all source}
\end{equation} and consider
 $(G_1,\tau_1,f)=(G,\tau,f)$. Then the following claim holds.

\begin{claim}
\label{cl: positive ditrail}
For every $y\in Y_x$, $(G,\tau)$ contains a positive dipath from $x$ to $y$.
\end{claim}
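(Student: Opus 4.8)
The plan is to track how the switching at the vertices of $Y_x^-$ affects dipaths emanating from $x$, and then read the claim off the definitions of $Y_x^+$ and $Y_x^-$. The only fact I need about switching is its effect on the orientation: switching at a vertex $u$ flips $\tau(h)$ for every half-edge $h$ incident with $u$. This is forced, since after negating $\sigma$ on $\delta_G(u)$ one must negate exactly the half-edges at $u$ to preserve the relation $\tau(h_e^u)\tau(h_e^v)=-\sigma(e)$. Consequently, passing from $(G,\tau)$ to $(G_1,\tau_1)$ flips precisely those half-edge orientations whose unique incident vertex lies in $Y_x^-$.

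First I would record the behavior of a fixed diwalk $W=x e_1 x_1\cdots e_t x_t$ under this switching by examining its three defining conditions separately. At an interior vertex $x_i$ the transit condition $\tau(h_{e_i}^{x_i})+\tau(h_{e_{i+1}}^{x_i})=0$ is preserved: if $x_i\in Y_x^-$ both terms are negated, and if $x_i\notin Y_x^-$ neither is, so in either case the sum stays $0$. The start condition $\tau(h_{e_1}^{x})=1$ is untouched, because $x\in Y_x^+$ and hence $x\notin Y_x^-$, so $x$ is never switched. Finally the sign of $W$ is governed solely by the terminal half-edge value $\tau(h_{e_t}^{x_t})$, which is flipped exactly when $x_t\in Y_x^-$.

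With this bookkeeping the claim follows by a case split on $y$. If $y\in Y_x^+$, I choose a positive dipath $W$ from $x$ to $y$ in $(G,\tau)$; neither endpoint of $W$ lies in $Y_x^-$, so $W$ remains a diwalk with unchanged terminal half-edge and is therefore still a positive dipath in $(G_1,\tau_1)$. If $y\in Y_x^-$, I choose a negative dipath $W$ from $x$ to $y$ in $(G,\tau)$; now $y$ is switched, so $\tau(h_{e_t}^{y})$ flips from $+1$ to $-1$ while all transit conditions and the start condition survive, turning $W$ into a positive dipath from $x$ to $y$ in $(G_1,\tau_1)$. Since $Y_x=Y_x^+\cup Y_x^-$, every $y\in Y_x$ acquires a positive dipath from $x$, which is exactly the claim after identifying $(G_1,\tau_1,f)$ with $(G,\tau,f)$.

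The only genuinely substantive point, and the one I would write out most carefully, is the invariance of the interior transit conditions under switching at interior vertices of $W$: this is what guarantees that $W$ stays a legitimate diwalk after we switch at an arbitrary subset of its interior vertices, so that only the terminal half-edge---and thus only the sign of $W$---can change. Everything else is a direct appeal to the definitions of dipath, of positive and negative dipath, and of $Y_x^+$ and $Y_x^-$.
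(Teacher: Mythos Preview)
Your argument is correct and follows the same approach as the paper. In the paper the claim is stated immediately after the ``without loss of generality'' reduction $Y_x^-=\emptyset$, so its proof is really the preceding paragraph, where the authors simply assert that after switching at every vertex of $Y_x^-$ one obtains a positive dipath from $x$ to each $y\in Y_x$ ``by the definitions of $Y_x^+$ and $Y_x^-$''; you have spelled out the mechanics of that assertion in detail, verifying explicitly that the transit conditions survive switching at interior vertices and that only the terminal half-edge governs the sign.
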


\begin{claim}
\label{CL: tadpole}
$(G[Y_x],\tau)$ contains a tadpole with tail end $x$ (see Definition~\ref{DEF: tadpole}).
\end{claim}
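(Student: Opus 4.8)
The plan is to locate, inside $(G[Y_x],\tau)$, an unbalanced closed trail that will serve as the head $C$ of the tadpole, and then to attach to it a positive dipath tail emanating from $x$. First I would produce the negative edge that carries the unbalancedness. After (S1) the flow $f$ is positive on every edge, and by Claim~\ref{cl: no sink vertex} there is no sink vertex; hence Observation~\ref{OB: source to sink}, applied to the source $x$, yields an all-positive dipath $R$ from $x$ to a sink, and this sink cannot be a vertex, so it is a sink edge $e^*=ab$, i.e.\ a negative edge whose two half-edges both point outward. The endpoint reached by $R$, say $a$, lies in $Y_x^+$. Either $b$ already lies on $R$, or crossing $e^*$ after $R$ is a negative dipath from $x$ to $b$; in both cases \eqref{EQ: all source} gives $b\in Y_x^+=Y_x$. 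Thus both ends of $e^*$ are reachable from $x$ by positive dipaths, and $R$, $a$, $b$ all lie in $Y_x$.

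Next I would splice. Using Claim~\ref{cl: positive ditrail} I pick positive dipaths from $x$ to $a$ and to $b$; reading one of them out to $a$, traversing $e^*$, and reading the other back from $b$ produces a closed diwalk that terminates pointing \emph{outward} at its basepoint, hence is a closed negative walk, and whose number of negative edges is odd, hence is unbalanced. Reducing it to a closed negative ditrail $C$ with turning vertex $v_1$ (in the sense of Definition~\ref{DEF: tadpole}) and prepending the common initial segment produces a first configuration $(P,C,v_1)$: a positive dipath $P$ from $x$ to the turning vertex $v_1$ of an unbalanced closed ditrail $C\subseteq G[Y_x]$. This is exactly a tadpole, except that $P$ and $C$ may meet outside $v_1$.

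To force $V(P)\cap V(C)=\{v_1\}$ I would take such a configuration with $|E(P)|$ minimum. If $P$ met $C$ at some $w\neq v_1$, let $w$ be the last such vertex along $P$, so $D:=P[w\to v_1]$ meets $C$ only in $\{w,v_1\}$; then $D$ and the two arcs $A_1,A_2$ of $C$ between $v_1$ and $w$ form a theta. The decisive point is the rigidity of the orientation: at the two junctions $w,v_1$ exactly one of the cycles $D\cup A_i$ can be traversed as an honest diwalk (pass-through at both junctions) and is therefore balanced, which forces the other cycle to be an unbalanced closed ditrail turning precisely at $w$. This yields a configuration whose tail $P[x\to w]$ is strictly shorter, contradicting minimality—\emph{provided} $P[x\to w]$ is again a positive dipath.

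The main obstacle is exactly this proviso: a prefix of a positive dipath need not be positive, since its sign flips with the parity of the negative edges it contains, so the shortcut might return a negative tail and destroy the monovariant. I would remove this by running the tail along all-positive dipaths, whose prefixes are automatically all-positive; this reduces the whole argument to guaranteeing that the turning vertex $v_1$ is reachable from $x$ by an all-positive dipath. Establishing that is the delicate step, and here I expect to combine \eqref{EQ: all source} with the no-long-barbell hypothesis: by Lemma~\ref{LE: bridgeless} the unbalancedness is confined to $Y_x$, and the absence of long barbells forbids two vertex-disjoint unbalanced circuits, which is what pins the unbalanced closed trail together with an all-positive access from $x$ into a single tadpole rooted at $x$.
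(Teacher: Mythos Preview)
Your opening paragraph is sound and matches the paper: an all-positive dipath $R$ from $x$ reaches a sink edge $e^*=u'u''$, and both ends lie in $Y_x=Y_x^+$. The gap opens in your second paragraph when you replace $R$ by an arbitrary positive dipath to $a$ and then try to ``reduce'' the resulting closed walk to a negative ditrail (a step you do not justify) before running a minimization on $|E(P)|$. You correctly diagnose that the minimization fails because prefixes of positive dipaths need not be positive, but your proposed repair---invoking the no-long-barbell hypothesis to manufacture all-positive access to $v_1$---is a dead end. The paper's proof of this claim uses the absence of long barbells \emph{nowhere}; that hypothesis enters only later, in Claim~\ref{CL: X=1}, and nothing about barbells yields all-positive dipaths.

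The paper avoids the whole difficulty by never abandoning $R$. Keep $P_x':=R$ (all-positive, from $x$ to $u'$) and take any positive dipath $P_x''$ from $x$ to $u''$. If they share no edge, then $P_x'+e^*+P_x''$ is already a tailless tadpole at $x$. Otherwise, let $e_s$ be the \emph{last} edge of $P_x''$ lying on $P_x'$; its endpoint $x_s$ lies on $P_x'$, and the tail is simply the prefix $P_x'[x,x_s]$, with head $P_x'[x_s,u']+e^*+P_x''(x_s,u'')$. Because the tail is a prefix of the all-positive $P_x'$, it is automatically a positive dipath---exactly the property your minimization was struggling to secure, obtained here for free with no extremal argument and no structural hypothesis.
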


\begin{proof}
By Observation~\ref{OB: source to sink}, there is a sink $t_x$ of $(\tau,f)$ such that $(G,\tau)$ contains an all-positive dipath from $x$ to $t_x$.
Note that $(\tau,f)$ contains no sink vertices by Claim \ref{cl: no sink vertex}. Hence $t_x$ must be a sink edge, say $t_x=u'u''$. Let $P_x'$ be the all-positive dipath from $x$ to $u'$.
Then $u' \in Y_x$, $t_x\notin E(P_x')$, and $P_x'+t_x$ is a negative dipath from $x$ to $u''$ since $t_x$ is a sink edge. Thus $u''\in Y_x = Y_x^+$ (by Equation~(\ref{EQ: all source})).

This implies that $(G[Y_x],\tau)$ has a positive dipath from $x$ to $u''$. Let $P_x''=xe_1x_1\cdots e_{t-1}x_{t-1}e_tx_t$ ($x_t=u''$) be a positive dipath from $x$ to $u''$ in $(G[Y_x],\tau)$.
 Then $t_x\notin E(P_x'')$ since $t_x$ is a sink edge. If $E(P_x')\cap E(P_x'')=\emptyset$, then $P_x'+t_x+P_x''$ is a tailless tadpole with tail end $x$. If $E(P_x')\cap E(P_x'')\neq \emptyset$, then let $s$ be the maximum index in $\{1,2,\dots,t\}$ such that $e_s\in E(P_x')$. Thus $P_x'+t_x+P_x''(x_s,u'')$ is a tadpole with tail end $x$, where $P_x''(x_s,u'')$ is the segment of $P_x''$ from $x_s$ to $u''$.
 \end{proof}

By Claim \ref{CL: tadpole}, let $P_x+C_x$ be a tadpole with tail end $x$ in $(G[Y_x],\tau)$. Here, $P_x$ is an all-positive dipath from $x$ to a vertex, denoted by $y_x$, $C_x$ is a closed negative ditrail from $y_x$ to $y_x$ and $V(P_x)\cap V(C_x)=\{y_x\}$. Note that it is possible that $P_x$ is the single vertex $x$.

\begin{claim}
\label{CL: one k}
 $\partial (\tau,f)(x)=k$ and if $y_x\neq x$, then $\partial (\tau,f)(y_x)=0$.
\end{claim}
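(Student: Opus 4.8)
The plan is to analyze how the \emph{minusing} operation of Definition~\ref{DEF: minus} changes the boundary function, and then to play this change against the extremal conditions (S3) and (S4). The starting point is that, since $(\tau,f)$ has no sink vertices by Claim~\ref{cl: no sink vertex} and satisfies (S2), we may write $\partial(\tau,f)(x)=\mu k$ with $\mu\ge 1$ and $\partial(\tau,f)(y_x)=\nu k$ with $\nu\ge 0$; the two assertions of the claim amount to showing $\mu=1$ and, when $y_x\neq x$, $\nu=0$.

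First I would record the local effect of minusing. A direct check from Definition~\ref{DEF: minus} shows that minusing a single edge $e=uv$ changes $\partial$ at $u$ by $-k\,\tau(h_e^u)$ and at $v$ by $-k\,\tau(h_e^v)$; this is exactly the computation already used in the proof of Claim~\ref{cl: no negative ditrail}. Applying it along the all-positive dipath $P_x$, the diwalk condition makes the internal contributions cancel, so minusing $E(P_x)$ sends $\partial(x)\mapsto\partial(x)-k$ and $\partial(y_x)\mapsto\partial(y_x)+k$. Applying it along the closed negative ditrail $C_x$, the internal vertices again cancel, while at the base point $y_x$ both the first and the last half-edge point outward, which is precisely what makes $C_x$ a \emph{negative} closed ditrail; hence minusing $E(C_x)$ sends $\partial(y_x)\mapsto\partial(y_x)-2k$ and fixes every other boundary. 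Since $P_x$ and $C_x$ are edge-disjoint, minusing the whole tadpole decreases $\partial(x)$ by $k$, decreases $\partial(y_x)$ by $k$, and leaves all other boundaries unchanged; in the tailless case $y_x=x$, minusing $C_x$ alone decreases $\partial(x)$ by $2k$. Crucially, minusing preserves (S1) and (S2), so every configuration produced this way remains admissible for comparison.

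With these formulas the claim follows from extremality. For the second assertion, suppose $y_x\neq x$ and $\nu\ge 1$: minusing the tadpole then lowers both $|\partial(x)|$ and $|\partial(y_x)|$ by $k$, i.e.\ decreases $\eta(\tau,f)$ by $2k$, contradicting (S3); hence $\nu=0$. For the first assertion, suppose $\mu\ge 2$. If $y_x=x$, minusing $C_x$ lowers $|\partial(x)|$ by $2k$ and again contradicts (S3). If $y_x\neq x$, then $\nu=0$ by the second assertion, and minusing the tadpole trades $|\partial(x)|=\mu k$ down by $k$ against $|\partial(y_x)|=0$ up by $k$, so $\eta$ is \emph{unchanged} while $y_x$ becomes a sink vertex with $\partial(y_x)=-k$; switching at $y_x$ restores a valid orientation, keeps (S1)--(S3) intact, makes $y_x$ a new source, and keeps $x$ a source because $(\mu-1)k>0$. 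This strictly increases the number of source vertices, contradicting (S4). Hence $\mu=1$.

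I expect the main obstacle to be this last subcase: unlike the others it does not decrease $\eta$, so (S3) is powerless and the contradiction must be routed through (S4) via a switch at $y_x$. The other delicate point is the half-edge sign bookkeeping that forces $C_x$ to contribute exactly $-2k$ (rather than $0$) at $y_x$; everything hinges on $C_x$ being the \emph{negative} closed ditrail supplied by Claim~\ref{CL: tadpole}.
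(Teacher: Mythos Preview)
Your argument is correct and follows essentially the same approach as the paper: compute how minusing along the tadpole (or pieces of it) shifts the boundaries at $x$ and $y_x$, and play the result against (S3) or (S4). The only cosmetic differences are that the paper orders the two assertions the other way around, invokes Claim~\ref{cl: no negative ditrail} directly to get $\partial(\tau,f)(y_x)=0$ rather than re-running the minusing computation, and in the subcase $y_x\neq x$, $\nu=0$, $\mu\ge 2$ it minuses only $E(P_x)$ (which already sends $\partial(y_x)$ to $+k$) instead of minusing the whole tadpole and then switching at $y_x$; your route through the extra switch is a harmless detour to the same contradiction with (S4).
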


\begin{proof}
Suppose to the contrary $\partial (\tau,f)(x) \not =k$. Then $\partial (\tau,f)(x)\ge 2k$ since $x$ is a source vertex and $\partial (\tau,f)(x) = \mu k$ for some positive integer $\mu$.

If $\partial (\tau,f)(y_x)=0$, then $y_x\neq x$, so $|E(P_x)|\ge 1$. We can check easily that the new triple $(G, \tau_{\widetilde{E(P_x)}}, f_{\widetilde{E(P_x)}})$ satisfies (S1)$\sim$(S3) and the set of source vertices is $X\cup \{y_x\}$, a contradiction to (S4).

If $\partial (\tau,f)(y_x)\neq 0$, since $P_x+C_x$ is a negative ditrail from $x$ to $y_x$, the new triple $(G,\tau_{\widetilde{E'}}, f_{\widetilde{E'}})$ (where $E'=E(P_x+C_x)$) satisfies (S1) and (S2).  However, the total sum of boundaries
 is reduced by $2k$. This contradicts (S3) and so the claim holds. Therefore $\partial (\tau,f)(x)=k$.

Now assume $y_x\neq x$. Since $P_x+C_x$ is a negative ditrail from $x$ to $y_x$, by Claim \ref{cl: no negative ditrail}, $y_x\notin X$ and thus $\partial (\tau,f)(y_x)=0$.
\end{proof}

For the sake of convenience, let $(G,\tau_{\widetilde{E(P_x)}},f_{\widetilde{E(P_x)}})=(G,\tau_x,f_x)$ and let $X'$ be the set of source vertices of $(\tau_x,f_x)$.

\begin{claim}
\label{cl: new triple}
The following statements for $(G,\tau_x,f_x)$ are true.

(a) $C_x$ is a tailless tadpole with tail end $y_x$ in $(G,\tau_x)$;

(b) $X'=(X\setminus \{x\})\cup \{y_x\}$;

(c) $(G,\tau_x,f_x)$ satisfies (S1)$\sim$(S4).
\end{claim}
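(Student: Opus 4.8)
The plan is to treat the minusing on $E(P_x)$ as a \emph{rerouting} of boundary along the dipath and to read off all four optimality conditions from a single local computation. The key formula is the following: if we minus (Definition~\ref{DEF: minus}) on one edge $e=uv$, then the contribution of $e$ to $\partial(\tau,f)(u)$, namely $\tau(h_e^u)f(e)$, is replaced by
$$
(-\tau(h_e^u))\bigl(k-f(e)\bigr)=\tau(h_e^u)f(e)-\tau(h_e^u)\,k,
$$
so the boundary at $u$ changes by exactly $-\tau(h_e^u)\,k$, and symmetrically at $v$. Thus each minusing shifts every boundary by an integer multiple of $k$; since $(\tau,f)$ satisfies (S2), so does $(G,\tau_x,f_x)$, while (S1) is immediate because $0<f(e)<k$ forces $0<k-f(e)<k$.

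Next I would sum this local change along $P_x=x_0e_1x_1\cdots e_mx_m$ with $x_0=x$ and $x_m=y_x$. At the tail end $x$ only $e_1$ contributes, and the dipath condition $\tau(h_{e_1}^{x})=1$ (Definition~\ref{DEF: diwalk}) gives net change $-k$; at each interior vertex $x_i$ the incident edges $e_i,e_{i+1}$ contribute $-\bigl(\tau(h_{e_i}^{x_i})+\tau(h_{e_{i+1}}^{x_i})\bigr)k=0$ by the dipath condition; and at $y_x$ only $e_m$ contributes, with $\tau(h_{e_m}^{y_x})=-1$ because $P_x$ is \emph{positive}, giving net change $+k$. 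Combining this with Claim~\ref{CL: one k} (which gives $\partial(\tau,f)(x)=k$, and $\partial(\tau,f)(y_x)=0$ when $y_x\neq x$), the boundary at $x$ drops from $k$ to $0$, the boundary at $y_x$ rises from $0$ to $k$, and every other boundary is untouched. Hence no sink vertex is created and $X'=(X\setminus\{x\})\cup\{y_x\}$, which is (b); the degenerate case $y_x=x$ makes $P_x$ trivial and $(G,\tau_x,f_x)=(G,\tau,f)$, so (b) holds there as well. Part (a) is even simpler: since $P_x$ is a path and $C_x$ a closed trail meeting it only at $y_x$, their edge sets are disjoint, so $\tau_x$ agrees with $\tau$ on $E(C_x)$ and $C_x$ remains a closed negative ditrail from $y_x$ to $y_x$, i.e.\ a tailless tadpole with tail end $y_x$ (Definition~\ref{DEF: tadpole}).

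Finally, for (c) it remains to verify (S3) and (S4). Since $|\partial|$ at $x$ falls by $k$ and at $y_x$ rises by $k$ while nothing else moves, $\eta(\tau_x,f_x)=\eta(\tau,f)$; as $(G,\tau_x,f_x)$ satisfies (S1)--(S2) and $(\tau,f)$ already attains the minimum of $\eta$, condition (S3) holds. Likewise $|X'|=|X|$ because $x\in X$ is removed and $y_x\notin X$ (again by Claim~\ref{CL: one k}) is added, so $(G,\tau_x,f_x)$ still attains the maximal number of source vertices and satisfies (S4). I expect the only delicate point to be the sign bookkeeping in the local change formula, together with the correct use of the dipath and positive-dipath end conditions at $x$ and $y_x$; everything else follows directly from Claim~\ref{CL: one k} and the fact that minusing shifts boundaries by multiples of $k$ without changing the residues of the flow values or moving them out of the interval $(0,k)$.
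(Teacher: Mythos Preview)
Your proof is correct and follows essentially the same approach as the paper: the paper simply says that when $y_x\neq x$ one can ``check directly'' that (b) and (c) hold from Claim~\ref{CL: one k}, and that (a) is trivial from $E(C_x)\cap E(P_x)=\emptyset$. You have carried out exactly this direct check, computing the boundary shift $-\tau(h_e^u)k$ at each endpoint of a minused edge and telescoping along the positive dipath $P_x$ to see that the boundary moves by $-k$ at $x$, by $+k$ at $y_x$, and is unchanged elsewhere; this immediately gives (b), preserves $\eta$ and $|X'|=|X|$ for (S3)--(S4), and your argument for (a) and the degenerate case $y_x=x$ matches the paper's.
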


\begin{proof}
The statement (a) is trivial since $E(C_x)\cap E(P_x)=\emptyset$ and $C_x$ is a tailless tadpole with tail end $y_x$ in $(G,\tau)$. Now we show  the statements (b) and (c). In fact, if $y_x=x$, then $X'=X$ and $(\tau_x,f_x)=(\tau,f)$, and thus both (b) and (c) are trivial; if $y_x\neq x$, then by Claim \ref{CL: one k}, we can also check directly that both (b) and (c) hold.
\end{proof}

Similar to Claims \ref{cl: no sink vertex} and \ref{cl: no negative ditrail}, it follows from Claim \ref{cl: new triple}-(c) that $(\tau_x,f)$ contains no sink vertices
 and $(G,\tau_x)$ contains no negative ditrails between two distinct vertices of $X'$.

\begin{claim}
\label{cl: to Cx}
For every $x'\in X'\setminus \{y_x\}$, $(G,\tau_x)$ contains no dipath from $x'$ to $C_x$.
\end{claim}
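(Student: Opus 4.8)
The plan is to argue by contradiction and to manufacture precisely the configuration that the remark immediately following Claim~\ref{cl: new triple} has already ruled out, namely a negative ditrail of $(G,\tau_x)$ joining two distinct vertices of $X'$. Suppose some $x'\in X'\setminus\{y_x\}$ admits a dipath to $C_x$ in $(G,\tau_x)$. Since $y_x\in X'$ (Claim~\ref{cl: new triple}(b)) and $x'\neq y_x$, it is enough to build an \emph{honestly negative} ditrail from $x'$ to $y_x$; this will contradict the fact, established right after Claim~\ref{cl: new triple}, that $(G,\tau_x)$ has no negative ditrail between two distinct vertices of $X'$.

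First I would normalize the given dipath. Let $Q$ be a dipath from $x'$ to a vertex of $V(C_x)$, and let $w$ be the first vertex of $Q$, read from $x'$, that lies on $C_x$; replacing $Q$ by its initial segment up to $w$, I may assume that $Q$ meets $C_x$ only in $w$, so that $Q$ and $C_x$ are edge-disjoint. The crucial structural observation is that, because $C_x$ is a closed \emph{negative} ditrail from $y_x$ to $y_x$ (Claim~\ref{cl: new triple}(a)), both half-edges of $C_x$ incident with $y_x$ are oriented away from $y_x$: the first one has $\tau_x$-value $+1$ by the diwalk start condition, and the last one has $\tau_x$-value $+1$ because the closed ditrail is negative. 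Hence any diwalk that arrives at $y_x$ by entering along an edge of $C_x$ incident with $y_x$ terminates with a half-edge of value $+1$, and is therefore negative.

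With this in hand I would splice $Q$ with an arc of $C_x$. Write $C_x$ as the concatenation of the arc $A$ from $y_x$ to $w$ and the arc $B$ from $w$ to $y_x$. The ditrail condition of $C_x$ at $w$ says that its two half-edges at $w$ take opposite values $\pm1$; consequently, if $\epsilon$ denotes the $\tau_x$-value of the last half-edge of $Q$ at $w$, then exactly one of ``continue $Q$ forward along $B$'' and ``continue $Q$ backward along $A$'' meets the sum-zero junction condition at $w$, and so extends $Q$ to a genuine diwalk. Reversing $A$ does not disturb the interior diwalk conditions, as these are symmetric in the two half-edges at each internal vertex, so both options are legitimate diwalks whenever their junction at $w$ is; and in either case the walk ends at $y_x$ along an edge of $C_x$, hence is negative by the observation above. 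Since $Q$ is a path, each of $A,B$ is part of the trail $C_x$, and the two are edge-disjoint, the result is a negative ditrail from $x'$ to $y_x$, the desired contradiction. The degenerate case $w=y_x$ is immediate: if $Q$ is already negative it is such a ditrail, and otherwise $Q$ followed by the whole closed ditrail $C_x$ is.

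The main obstacle is exactly the orientation bookkeeping in the splice: one must be certain that the arc which legitimately attaches to $Q$ at $w$ produces a negative ditrail and not a positive one. This is what the observation ``both half-edges of $C_x$ at $y_x$ point away from $y_x$'' delivers, since the spliced walk necessarily reaches $y_x$ through one of those two edges. As an independent check one can use the parity description of diwalks---a diwalk is negative precisely when it traverses an odd number of negative edges---together with the fact that a closed negative ditrail contains an odd number of negative edges; but I would present the half-edge computation as the primary argument, since it follows directly from Definitions~\ref{DEF: diwalk} and~\ref{DEF: tadpole} without invoking any parity lemma.
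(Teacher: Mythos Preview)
Your proposal is correct and follows essentially the same approach as the paper's proof: assume a dipath $Q$ from $x'$ landing on $C_x$ at a single vertex $w$, split $C_x$ at $w$ into two arcs, and observe that exactly one arc attaches compatibly to $Q$ at $w$ to produce a negative ditrail from $x'$ to $y_x$, contradicting the absence of negative ditrails between distinct members of $X'$. Your justification of why the spliced walk is \emph{negative}---via the observation that both half-edges of $C_x$ at $y_x$ are oriented away---is more explicit than the paper's, which simply notes that the two arcs from $y_x$ to $w$ have opposite signs and concludes directly; but the content is the same.
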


\begin{proof}
Suppose to the contrary that $P$ is a dipath from $x'$ to $y$ with $V(P)\cap V(C_x)=\{y\}$ in $(G,\tau_x)$. Since $C_x$ is a closed negative ditrail from $y_x$ to $y_x$
 (by Claim \ref{cl: new triple}-(a))
 and $y\in V(C_x)$, $C_x$ can be decomposed into two edge-disjoint ditrails from $y_x$ to $y$, denoted by $C_1$ and $C_2$. Since $C_x$ is negative, one of $C_1$ and $C_2$ is positive and the other one is negative. Thus either $P+C_1$ or $P+C_2$
is a negative dipath from $x'$ to $y_x$.
This contradicts that $(G,\tau_x)$ contains no negative ditrails between two distinct vertices of $X'$.
\end{proof}

\begin{claim}
\label{CL: X=1}
$X=\{x\}$.
\end{claim}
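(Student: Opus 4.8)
The plan is to argue by contradiction: assuming $|X|\ge 2$, I will exhibit two vertex-disjoint unbalanced circuits joined by a path internally disjoint from both, i.e.\ a long barbell, contradicting the hypothesis on $(G,\sigma)$. Throughout I may assume $G$ is connected (a nowhere-zero $\mathbb Z_k$-flow restricts to each component, so a connected counterexample may be assumed, and switching/minusing do not change the underlying graph).

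First I would locate a second source to work with. Recall $X'=(X\setminus\{x\})\cup\{y_x\}$ and, by Claim~\ref{CL: one k}, $y_x\notin X$ whenever $y_x\neq x$; hence in every case $X'\setminus\{y_x\}=X\setminus\{x\}$, which is nonempty by the assumption $|X|\ge 2$. Fix $x'\in X\setminus\{x\}$, so that $x'\in X'\setminus\{y_x\}$. Since $(G,\tau_x,f_x)$ satisfies (S1)--(S4) with source set $X'$ (Claim~\ref{cl: new triple}), and since $(G,\tau_x)$ has no sink vertices and no negative ditrail between distinct vertices of $X'$, I would repeat verbatim the construction that yielded the first tadpole: switch at the vertices reachable from $x'$ only by negative dipaths (these have zero boundary, so (S1)--(S4) and the source set $X'$ survive), and then apply Observation~\ref{OB: source to sink} together with the argument of Claim~\ref{CL: tadpole} to obtain a tadpole with tail end $x'$, consisting of an all-positive dipath from $x'$ to some $y_{x'}$ and a closed negative ditrail $C_{x'}$ from $y_{x'}$ to $y_{x'}$.

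The hard part is to prove $V(C_x)\cap V(C_{x'})=\emptyset$; this is where Claim~\ref{cl: to Cx} must be leveraged carefully, because the tadpole for $x'$ is built in the switched orientation rather than in $(G,\tau_x)$. I would first observe that the auxiliary switching does not disturb $C_x$: by Claim~\ref{cl: to Cx} no dipath from $x'$ reaches $C_x$ in $(G,\tau_x)$, so no vertex of $C_x$ lies in the switched set, and therefore $C_x$ remains the same closed negative ditrail. Next I would show that reachability from $x'$ toward $C_x$ is switching-invariant: a dipath from $x'$ to a vertex $w$ in the switched orientation uses, edge for edge, the same diwalk in $(G,\tau_x)$, since switching at an internal vertex flips both incident half-edge values and preserves the diwalk condition, while neither endpoint $x'$ nor a vertex $w\in V(C_x)$ is switched. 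Consequently, if $C_{x'}$ met $C_x$ in a vertex $w$, composing the positive dipath from $x'$ to $y_{x'}$ with the arc of $C_{x'}$ from $y_{x'}$ to $w$ would give a dipath from $x'$ to $C_x$ in $(G,\tau_x)$, contradicting Claim~\ref{cl: to Cx}. Hence the two closed negative ditrails are vertex-disjoint.

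Finally I would extract the barbell. Each of $C_x$ and $C_{x'}$ is a closed negative ditrail, hence has an odd number of negative edges; decomposing each into edge-disjoint circuits, one of them must be unbalanced, giving unbalanced circuits $D_x\subseteq C_x$ and $D_{x'}\subseteq C_{x'}$. By the previous step $D_x$ and $D_{x'}$ are vertex-disjoint, and since $G$ is connected there is a path joining them; truncating it at its last vertex on $D_x$ and its first vertex on $D_{x'}$ yields a path meeting the two circuits only at its ends. Thus $D_x\cup D_{x'}$ together with this path is a long barbell, the desired contradiction, so $X=\{x\}$. I expect the disjointness argument of the third paragraph to be the crux, precisely because it requires checking that the switching used to manufacture $C_{x'}$ neither alters $C_x$ nor invalidates the reachability obstruction supplied by Claim~\ref{cl: to Cx}.
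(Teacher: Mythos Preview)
Your proposal is correct and follows the same overall strategy as the paper: pick a second source $x'\in X\setminus\{x\}=X'\setminus\{y_x\}$, build a tadpole from $x'$ in the triple $(G,\tau_x,f_x)$, and conclude by producing two vertex-disjoint unbalanced circuits, hence a long barbell.

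The one real organisational difference concerns the step you identify as the ``crux''. You construct the second tadpole first (after switching at $Y_{x'}^-$) and then work to show $V(C_x)\cap V(C_{x'})=\emptyset$ via a switching-invariance argument for reachability. The paper reverses the order: it defines $Y_{x'}=\{y:(G,\tau_x)\text{ has a dipath from }x'\text{ to }y\}$ \emph{before} any further switching, invokes Claim~\ref{cl: to Cx} once to get $Y_{x'}\cap V(C_x)=\emptyset$, and then notes that the entire tadpole construction of Claims~\ref{cl: positive ditrail}--\ref{CL: tadpole} takes place inside $G[Y_{x'}]$. Thus the unbalanced circuit $C_{x'}$ lies in $G[Y_{x'}]$ and is automatically disjoint from $C_x$; no separate switching-invariance check is needed. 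Your argument establishes the same fact, but the paper's ordering makes the disjointness a one-line consequence rather than the hard part. Your explicit reduction to connected $G$ is also appropriate; the paper uses the implication ``no long barbells $\Rightarrow$ no two vertex-disjoint unbalanced circuits'' without comment, which tacitly needs the two circuits to lie in the same component.
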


\begin{proof}
Suppose to the contrary  $x'\in X\setminus \{x\}$. Then $x'\in X'\setminus \{y_x\}$ by Claim \ref{cl: new triple}-(b). Let
$$
Y_{x'}=\{y\in V(G) : \mbox{$(G,\tau_x)$ contains a dipath from $x'$ to $y$}\}.
$$
By Claim \ref{cl: to Cx}, $Y_{x'}\cap V(C_x)=\emptyset$. Note that $(G,\tau_x,f_x)$ satisfies (S1)$\sim$(S4) by Claim \ref{cl: new triple}-(c). Similar to the discussion in Claims \ref{cl: positive ditrail} and \ref{CL: tadpole}, $(G[Y_{x'}],\tau_x)$ contains a tadpole with tail end $x'$. By the definition, there is an unbalanced circuit, denoted by $C_{x'}$, in this tadpole.
Since $(G,\sigma)$ contains no long barbells, $V(C_x)\cap V(C_{x'}) \not = \emptyset$, so $Y_{x'}\cap V(C_x)\neq \emptyset$. This contradicts  $Y_{x'}\cap V(C_x)=\emptyset$.
\end{proof}

\medskip \noindent
{\bf Final step.}
By Claim~\ref{CL: X=1}, $X = \{ x\}$.
By Claim~\ref{CL: one k}, $\partial(\tau,f)(x)=k$ which is an odd number.
Since the boundary of every negative edge is an even number, the total sum of the boundaries of $(\tau,f)$ on $V(G)\cup E(G)$ must be odd since $x$ is the only source/sink vertex with an odd boundary.
This contradicts Observation~\ref{OB: total defects}. Hence the proof of
 Theorem \ref{TH: mod flow-odd} is complete.
\hfill $\Box$

\medskip

There are precisely two abelian groups of order 4, namely the Klein Four Group $\mathbb{K}_4$ and the cyclic group $\mathbb{Z}_4$. Clearly,
the elements of the Klein Four  Group are self-inverse and therefore, a signed cubic graph $G$ has a nowhere-zero $\mathbb{K}_4$-flow
if and only if $G$ is 3-edge-colorable. We will show that this is also true for signed graphs without long barbells which admit
a nowhere-zero $\mathbb{Z}_4$-flow. We will apply a result of Ma\v{c}ajova and \v Skoviera.
A signed graph $(G,\sigma)$ is {\em antibalanced} if it is equivalent to a signed graph $(G,\sigma')$ with $E_N(G,\sigma') = E(G)$.

\begin{theorem}{\rm (Ma\v{c}ajova and \v Skoviera \cite{MS2015})}\label{z4-nzf}
A signed cubic graph admits a nowhere-zero $\mathbb{Z}_4$-flow if and only if it admits an antibalanced $2$-factor.
\end{theorem}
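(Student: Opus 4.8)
The plan is to prove both implications by analysing the edges carrying the self-inverse value $2\in\mathbb{Z}_4$. Throughout, write $E_2$ for the set of edges whose flow value is $2$ and $F=E(G)\setminus E_2$ for the edges whose value lies in $\{1,3\}$. First I would establish that in any nowhere-zero $\mathbb{Z}_4$-flow $(\tau,f)$ of a cubic signed graph, $E_2$ is a perfect matching and $F$ is a $2$-factor. Reducing the vertex condition $\partial(\tau,f)(v)\equiv 0\pmod 4$ modulo $2$ kills the orientation signs (as $\pm 1\equiv 1\pmod 2$), so the three incident values sum to an even number; since value-$2$ edges contribute $0$ and value-$\{1,3\}$ edges contribute $1$ modulo $2$, each vertex meets an even number ($0$ or $2$) of edges of $F$, hence an odd number ($1$ or $3$) of edges of $E_2$. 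The count $3$ is impossible, because any signed combination $\pm 2\pm 2\pm 2\equiv 2\not\equiv 0\pmod 4$. Thus every vertex meets exactly one edge of $E_2$.

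Then I would show $F$ is antibalanced, i.e.\ every circuit of $F$ carries an even number of positive edges; this is equivalent to the paper's definition, since switching preserves the parity of the number of positive edges on a circuit and an all-negative circuit has none. Fix a circuit $C=v_0e_1v_1\cdots e_\ell v_\ell=v_0$ of $F$, and for each edge record the boundary contributions $a_i=\tau(h_{e_i}^{v_{i-1}})f(e_i)$ and $b_i=\tau(h_{e_i}^{v_i})f(e_i)$ at its tail and head; both lie in $\{1,3\}$ modulo $4$. The defining identity $\tau(h_{e_i}^{v_{i-1}})\tau(h_{e_i}^{v_i})=-\sigma(e_i)$ gives $a_i\equiv b_i\pmod 4$ when $e_i$ is negative and $a_i\equiv -b_i\pmod 4$ (a ``flip'' between $1$ and $3$, since $-1\equiv 3$) when $e_i$ is positive. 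The vertex condition at $v_i$, with the matching edge contributing $2$, forces $b_i+a_{i+1}\equiv 2\pmod 4$, which for odd residues is equivalent to $a_{i+1}\equiv b_i\pmod 4$. Following the residue type around $C$, each positive edge flips it and each negative edge preserves it; since the walk closes up, the number of flips, hence the number of positive edges, is even.

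For the converse I would start from an antibalanced $2$-factor $F$, set $M=E(G)\setminus F$ (a perfect matching, as $G$ is cubic) and assign value $2$ to every edge of $M$, whose boundary contribution is $2$ regardless of orientation. On each circuit $C$ of $F$ I would build $\tau$ and $f\in\{1,3\}$ by propagating the residue type: choose the tail type $a_1$, which determines $b_1$ through the sign of $e_1$, set $a_2\equiv b_1$, and continue. Each prescribed type is realizable by a legitimate pair $(\tau(h),f(e))$, and the only obstruction is closing up consistently at $v_0=v_\ell$; this closure requires an even number of type-flips around $C$, which is exactly the antibalance hypothesis. The resulting $(\tau,f)$ is nowhere-zero with $\partial(\tau,f)\equiv 0\pmod 4$ at every vertex.

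The main obstacle I anticipate is purely bookkeeping: handling the half-edge orientation formalism so that the ``type-flip governed by the edge sign'' lemma and its use in both the extraction and the construction are stated without sign errors. The conceptual content is light once one isolates the fact that crossing an edge flips the residue type if and only if the edge is positive; the care lies in verifying that every prescribed type is realizable and that the matching edges never interfere, precisely because $2$ is self-inverse in $\mathbb{Z}_4$.
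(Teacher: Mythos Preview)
The paper does not prove this theorem; it merely quotes it from Ma\v{c}ajov\'a and \v{S}koviera \cite{MS2015} and then applies it in the proof of Theorem~\ref{cubic without long bb}. So there is no ``paper's own proof'' to compare against.

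That said, your argument is correct and is essentially the natural one. The forward direction is clean: the parity count at a cubic vertex forces exactly one edge of value $2$, and your residue-type propagation around each circuit of $F$ correctly shows that positive edges flip the type while negative edges preserve it, whence closure forces an even number of positive edges on every circuit, which is precisely antibalance. For the converse, the propagation construction works because for each prescribed pair $(a_i,b_i)$ (with $a_i\equiv -b_i$ on positive edges and $a_i\equiv b_i$ on negative edges) one can indeed realise it by choosing, say, $\tau(h_{e_i}^{v_{i-1}})=1$ and $f(e_i)=a_i\in\{1,3\}$; the matching edges take value $2$ with an arbitrary legal orientation, contributing $2$ at each endpoint regardless. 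The only point worth stating explicitly in a final write-up is that the orientation you build on $F$ and on $M$ is genuinely an orientation of $(G,\sigma)$ (i.e.\ respects $\tau(h_e^u)\tau(h_e^v)=-\sigma(e)$ on every edge), which your sketch addresses but only implicitly for the matching edges.
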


\begin{theorem}\label{cubic without long bb}
Let $(G,\sigma)$ be a flow-admissible signed cubic graph. If $(G,\sigma)$ contains no long barbells,
then $(G,\sigma)$ admits a
nowhere-zero $\mathbb{Z}_4$-flow if and only if it is $3$-edge-colorable.
\end{theorem}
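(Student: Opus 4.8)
The plan is to translate both sides of the claimed equivalence into statements about $2$-factors and then compare them. On the flow side I will use Theorem~\ref{z4-nzf}: a signed cubic graph admits a nowhere-zero $\mathbb{Z}_4$-flow if and only if it has an antibalanced $2$-factor. On the colouring side I will use the classical fact that a cubic graph is $3$-edge-colourable if and only if its edge set decomposes into three perfect matchings $M_1,M_2,M_3$; equivalently, it has an \emph{even $2$-factor}, i.e.\ a $2$-factor all of whose circuits have even length (the union of any two of the matchings). I may assume $(G,\sigma)$ is connected, since long barbells, $3$-edge-colourings and $\mathbb{Z}_4$-flows all behave componentwise. Two elementary observations drive everything: (i) a circuit $C$ is antibalanced exactly when $|E_N(C)|\equiv|E(C)|\pmod 2$, so an even circuit is antibalanced iff it is balanced, while an odd circuit is antibalanced iff it is unbalanced; and (ii) since $(G,\sigma)$ has no long barbells, among any family of pairwise vertex-disjoint circuits at most one can be unbalanced, because two disjoint unbalanced circuits joined by a shortest connecting path would form a long barbell.

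For the direction ``nowhere-zero $\mathbb{Z}_4$-flow $\Rightarrow$ $3$-edge-colourable'', I start from an antibalanced $2$-factor $F$ furnished by Theorem~\ref{z4-nzf}. By observation~(i) every odd circuit of $F$ is unbalanced, and by observation~(ii) $F$ contains at most one unbalanced circuit; hence $F$ has at most one odd circuit. But $F$ is spanning and $|V(G)|$ is even (as $G$ is cubic), so the lengths of the circuits of $F$ sum to an even number and $F$ must contain an even number of odd circuits. Therefore $F$ has no odd circuit at all, i.e.\ $F$ is an even $2$-factor, and $G$ is $3$-edge-colourable.

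For the converse I will not merely exhibit some even $2$-factor, since it could contain an unbalanced even circuit and thus fail to be antibalanced; instead I will produce a \emph{balanced} even $2$-factor, which is antibalanced by observation~(i). Fix a $3$-edge-colouring with matchings $M_1,M_2,M_3$ and set $F_c=E(G)\setminus M_c$ for $c\in\{1,2,3\}$, so each $F_c$ is an even $2$-factor and each edge lies in exactly two of $F_1,F_2,F_3$; consequently $F_1+F_2+F_3=0$ in the $\mathbb{Z}_2$-cycle space $Z_1(G;\mathbb{Z}_2)$. The map $\beta\colon Z_1(G;\mathbb{Z}_2)\to\mathbb{Z}_2$ sending a cycle to the parity of its number of negative edges is $\mathbb{Z}_2$-linear and satisfies $\beta(C)=1$ iff $C$ is unbalanced. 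Writing $b_c$ for the number of unbalanced circuits of $F_c$, observation~(ii) gives $b_c\in\{0,1\}$, while linearity gives $b_1+b_2+b_3\equiv\beta(F_1+F_2+F_3)=0\pmod 2$. Thus the number of indices $c$ with $b_c=1$ is even, so some $F_c$ satisfies $b_c=0$; this $F_c$ is a balanced even $2$-factor, hence antibalanced, and Theorem~\ref{z4-nzf} yields a nowhere-zero $\mathbb{Z}_4$-flow.

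The routine parts are the two observations and the reduction to connected graphs. The step I expect to carry the real weight is the converse direction: the naive attempt of taking an arbitrary even $2$-factor fails because it may contain a unique unbalanced even circuit, and the essential idea is to play the three colour-pair $2$-factors off against each other through the linear functional $\beta$ on the cycle space, using $F_1+F_2+F_3=0$ together with the ``at most one unbalanced circuit'' consequence of excluding long barbells to force one of them to be balanced.
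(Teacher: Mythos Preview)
Your proof is correct and follows essentially the same route as the paper: both directions hinge on Theorem~\ref{z4-nzf}, the parity of $|V(G)|$, and the fact that a long-barbell-free connected signed graph cannot contain two vertex-disjoint unbalanced circuits. The only cosmetic difference is in the converse: the paper observes directly that two of the $|M_i\cap E_N(G,\sigma)|$ share the same parity, so their union has an even number of negative edges and hence (by the ``at most one'' argument) no unbalanced circuit; your formulation via the linear functional $\beta$ and the relation $F_1+F_2+F_3=0$ encodes exactly the same parity count.
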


\begin{proof} First assume that $(G,\sigma)$ admits a
nowhere-zero $\mathbb{Z}_4$-flow.
By Theorem \ref{z4-nzf}, $(G,\sigma)$ has an antibalanced $2$-factor $\mathcal{F}$.
Since $(G,\sigma)$ contains no long barbells and $\sum_{C\in \cal F}|V(C)|=|V(G)|\equiv 0 \pmod 2$, it follows that
that every circuit of $\cal F$ is of even length, so  $(G,\sigma)$ is $3$-edge-colorable.

Now assume that $G$ is 3-edge-colorable.  Then $E(G)$ can be decomposed into  three edge-disjoint $1$-factors $M_1, M_2$ and $M_3$.
Without loss of generality, assume  $|M_1\cap E_N(G,\sigma)| \equiv |M_2\cap E_N(G,\sigma)| \pmod 2$.
Let $C= M_1\cup M_2$. Clearly, $C$ is a $2$-factor of $G$.

Since $|E(C)\cap E_N(G,\sigma)|=|M_1\cap E_N(G,\sigma)|+|M_2\cap E_N(G,\sigma)| \equiv 0 \pmod 2$,
$C$ contains an even number $n$ of unbalanced circuits. Since $(G,\sigma)$ contains no long barbells, it follows  $n=0$. This implies that each component of $C$ is a balanced  circuit with even length and thus is antibalanced.
By Theorem \ref{z4-nzf}, $(G,\sigma)$ admits a nowhere-zero $\mathbb{Z}_4$-flow.
\end{proof}

 Theorem~\ref{TH: mod flow} doesn't hold for $k=4$.  There is a signed $W_5$ which has a nowhere-zero $\mathbb Z_4$-flow but doesn't have a nowhere-zero $4$-flow  (see \cite{lixw}). 
 
However, we don't know whether Theorem~\ref{TH: mod flow-odd} can be extended to all even positive integers $k\geq 6$.

 \begin{problem}
\label{TH: mod flow-odd-conj}
Let $k \geq 6$ be an even integer and $(G,\sigma)$ be a signed graph with a nowhere-zero $\mathbb Z_k$-flow $(\tau, f_1)$. If $(G,\sigma)$ contains no long barbells, does there exist  a nowhere-zero $k$-flow $(\tau, f_2)$ such that $$ f_1(e) \equiv f_2(e) \pmod{k}.$$
\end{problem}


\section{Circuit decomposition and sum of $2$-flows}
\label{Sum-flow}

The following theorem is well-known for unsigned graphs.

\begin{theorem}
\label{TH: circuit decomposition unsigned}
Every eulerian unsigned graph has a circuit decomposition.
\end{theorem}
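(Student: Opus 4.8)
The plan is to proceed by induction on $|E(G)|$, peeling off one circuit at a time. Recall that an eulerian graph is one in which every vertex has even degree, so the key structural fact I will exploit is that deleting the edges of a circuit changes each degree by $0$ or $2$ and hence preserves the evenness of all degrees. This is exactly what makes the induction go through.

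The base case $|E(G)| = 0$ is trivial: the empty set of circuits is a decomposition. For the inductive step, assume $G$ has at least one edge. First I would locate a single circuit inside $G$. Since every degree is even, every non-isolated vertex has degree at least $2$; restricting attention to the subgraph $H$ induced by the non-isolated vertices, $H$ has minimum degree at least $2$ and contains all the edges of $G$. A standard argument then produces a circuit: take a maximal path $v_0 v_1 \cdots v_\ell$ in $H$; by maximality every neighbour of $v_0$ lies on this path, and since $d_H(v_0) \geq 2$ there is a neighbour $v_j$ with $j \geq 2$, so $v_0 v_1 \cdots v_j v_0$ is a circuit $C$ in $G$.

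Having found $C$, I would delete its edges to form $G' = G - E(C)$. Each vertex of $C$ loses exactly $2$ from its degree while every other vertex is unchanged, so all degrees of $G'$ remain even; that is, $G'$ is again eulerian (possibly disconnected, but connectivity plays no role here). Since $|E(G')| < |E(G)|$, the induction hypothesis yields a circuit decomposition of $G'$, and adjoining $C$ gives the desired circuit decomposition of $G$.

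I do not anticipate a serious obstacle: the entire content is the existence of a circuit in a graph of minimum degree at least $2$, which the maximal-path argument settles, together with the trivial observation that removing a circuit preserves even degrees. The only point demanding a moment's care is the interpretation of the word \emph{eulerian}: the decomposition requires nothing beyond all degrees being even, so whether or not connectivity is built into the definition, the argument is unaffected, since after deleting $C$ one works with the even-degree property alone.
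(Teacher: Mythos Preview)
Your argument is correct and is the standard textbook proof of this classical fact. The paper itself does not prove Theorem~\ref{TH: circuit decomposition unsigned}; it simply states it as ``well-known'' and moves on to extend it to signed graphs without long barbells (Theorem~\ref{TH: decomposition long BB free}), so there is nothing to compare against beyond noting that your induction-on-edges / peel-off-a-circuit approach is exactly the expected justification.
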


Theorem~\ref{TH: circuit decomposition unsigned} for unsigned graphs is extended to the class of signed graphs without long barbells.

\begin{theorem}
\label{TH: decomposition long BB free}
Let $(G,\sigma)$ be a flow-admissible signed eulerian graph with $|E_N(G,\sigma)|$ even.  If $(G,\sigma)$ contains no long barbells,
then $(G,\sigma)$ has a decomposition ${\cal C}$ such that each member of ${\cal C}$ is either a balanced circuit or a short barbell.
\end{theorem}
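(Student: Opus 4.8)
The plan is to induct on $|E(G)|$ within the class of connected eulerian long-barbell-free signed graphs carrying an even number of negative edges. Since an eulerian graph is connected this costs no generality, and the deletions performed below will keep each resulting component inside this class. The base case is when $(G,\sigma)$ is balanced: after switching so that every edge is positive, Theorem~\ref{TH: circuit decomposition unsigned} yields a circuit decomposition, and because switching preserves balance every circuit in it is balanced, which is already the desired decomposition.

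For the inductive step assume $(G,\sigma)$ is unbalanced. The heart of the argument is to exhibit a single balanced circuit or short barbell $B\subseteq G$ that may be deleted. To find $B$, take any circuit decomposition $\mathcal D$ of $G$. If some member of $\mathcal D$ is balanced, let $B$ be that circuit. Otherwise every member of $\mathcal D$ is unbalanced; a single unbalanced circuit has an odd number of negative edges while $|E_N(G,\sigma)|$ is even, so $\mathcal D$ contains at least two unbalanced circuits $C,C'$. As $G$ is connected and has no long barbell, $C$ and $C'$ cannot be vertex-disjoint --- a path joining two disjoint unbalanced circuits would create a long barbell --- so they meet. If $V(C)\cap V(C')$ is a single vertex, then $B=C\cup C'$ is a short barbell. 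If they share at least two vertices, pick two consecutive shared vertices $u,w$ along $C$, so the arc $\alpha$ of $C$ between them has no internal shared vertex and hence meets $C'$ only in $\{u,w\}$; writing $\beta_1,\beta_2$ for the two $u$--$w$ arcs of $C'$, each of $\alpha\cup\beta_1$ and $\alpha\cup\beta_2$ is a (simple) circuit. Since $C'$ is unbalanced, $\beta_1$ and $\beta_2$ have different negative-edge parities, so exactly one of these two circuits is balanced; take it as $B$.

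It remains to delete $B$ and recurse. Because $B$ is a balanced circuit or a short barbell, $|E_N(B)|$ is even and every vertex loses an even number of incident edges, so $G-E(B)$ is again eulerian with an even total number of negative edges. The crucial observation is that \emph{at most one component of $G-E(B)$ is unbalanced}: two unbalanced components would contain vertex-disjoint unbalanced circuits, which in the connected graph $G$ are joined by a path and thus form a long barbell, a contradiction. Each balanced eulerian component has an even number of negative edges, since the parity of $|E_N|$ is a switching invariant in even-degree graphs (switching at $v$ changes $|E_N|$ by $d(v)-2d^-(v)$) and a balanced component switches to all-positive; consequently the unique unbalanced component, if present, also inherits an even number of negative edges. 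Hence every component of $G-E(B)$ is a connected eulerian long-barbell-free signed graph with an even number of negative edges and strictly fewer edges than $G$. Such a component is automatically flow-admissible: it is bridgeless (connected and eulerian) and, by the same parity invariant, not equivalent to a signed graph with exactly one negative edge, so Proposition~\ref{flow admissible} applies. By the induction hypothesis each component decomposes into balanced circuits and short barbells, and adjoining $B$ completes the decomposition of $(G,\sigma)$.

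The two steps I expect to be most delicate are the case in which $C$ and $C'$ share at least two vertices, where one must extract a genuinely balanced \emph{simple} circuit and verify that $\alpha\cup\beta_i$ is internally disjoint, and the bookkeeping guaranteeing that deleting $B$ never produces a component with an odd number of negative edges. The latter is precisely where the absence of long barbells is used: it forces all of the imbalance of $G$ to remain confined to a single component after the deletion, which is what keeps the induction inside the required class.
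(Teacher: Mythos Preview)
Your proof is correct and follows essentially the same approach as the paper: both locate a balanced circuit (or short barbell) inside the union of two intersecting unbalanced circuits via the parity argument on the two arcs of $C'$. The paper frames this as an extremal argument---take a circuit decomposition maximizing the number of balanced circuits plus short barbells and derive a contradiction---rather than an explicit induction, which lets it sidestep your bookkeeping about the components of $G-E(B)$, but the combinatorial core is identical.
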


\begin{proof}
Suppose to the contrary that $(G,\sigma)$ is a counterexample.
Since $(G,\sigma)$ is a signed eulerian graph, it has a decomposition
$\mathcal{C}=\{C_1,\dots, C_h, C_{h+1},\dots, C_{h+m}, C_{h+m+1},\dots, C_{h+m+n}\}$,
where $h, m$ and $n$ are three non-negative integers, and $C_i$ is an balanced circuit if $ i \in \{1, \dots,h\}$, a short barbell if $i \in \{h+1, \dots,h+m\}$, and a unbalanced circuit otherwise.
We choose such a decomposition that $h+m$ is as large as possible. Then $n\neq 0$.
Furthermore, $n \ge 2$ is even since $|E_N(G,\sigma)|\equiv |E_N(C_i,\sigma|_{E(C_i)})|\equiv 0 \pmod 2$ for
each $i \in \{1, \dots,h+m\}$.
Since $(G,\sigma)$ contains no long barbells, it also contains no vertex disjoint unbalanced circuits, and thus,
$C_{h+m+1}$ and $C_{h+m+2}$ have at least two common vertices. Let $x_1$ and $x_2$ be two common vertices of $C_{h+m+1}$ and
$C_{h+m+2}$ such that $C_{h+m+1}$ has a path $P_1$ from $x_1$ to $x_2$ containing no vertex of $C_{h+m+2}$ as internal vertex.
Let $P_2$ and $P_3$ be the two paths from $x_1$ to $x_2$ in $C_{h+m+2}$. Since $C_{h+m+2}$ is an unbalanced circuit,
there is exact one of $P_2$ and $P_3$, say $P_2$, such that $|E_N(P_1)|\equiv |E_N(P_2)| \pmod 2$,  so $P_1+P_2$
 is a balanced circuit of $(G \setminus \cup_{i=1}^{h+m}E(C_i))$. This contradicts the choice of $\mathcal C$.
\end{proof}

Next we are going to study the decomposition of nowhere-zero $k$-flows into elementary 2-flows. One of the basic theorems in flow theory for unsigned graphs is Theorem \ref{TH: sum Tutte}. The next theorem extends this result to the class of signed graphs without long barbells.

\begin{theorem}
\label{TH: sum LBB-free}
Let $(G,\sigma)$ be a signed graph without long barbells and $(\tau, f)$ be a non-negative $k$-flow of $(G,\sigma)$ where $k \geq 2$.
Then
$$(\tau,f) ~ = ~\sum_{i=1}^{k-1}(\tau, f_i),$$ where each $(\tau, f_i)$ is a non-negative $2$-flow.
\end{theorem}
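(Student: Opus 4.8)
The plan is to mimic the peeling strategy behind the unsigned Theorem~\ref{TH: sum Tutte}, but to replace the total-unimodularity argument that drives it (which fails for signed graphs, since an unbalanced circuit contributes a determinant equal to $2$) by the circuit/barbell decomposition of Theorem~\ref{TH: decomposition long BB free}. First I would reduce to a convenient normal form. Reversing the orientation of a single edge and negating its flow value preserves both the orientation constraint and the boundary, so I may assume $0\le f(e)\le k-1$ for every edge; discarding the edges with $f(e)=0$, I may assume $f$ is positive on $G$. Put $M=\max_e f(e)\le k-1$ and induct on $M$. If $M=1$ then $(\tau,f)$ is itself a non-negative $2$-flow and there is nothing to prove, so assume $M\ge 2$.

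The heart of the argument is a peeling lemma: there is a non-negative $2$-flow $(\tau,g)$ with $\supp(g)\subseteq\supp(f)$ and $g(e)=1$ for every edge $e$ of the max-set $U=\{e: f(e)=M\}$. Granting this, $f-g$ is a non-negative flow all of whose values lie in $\{0,\dots,M-1\}$, hence a non-negative $M$-flow of strictly smaller maximum; by induction $f-g=\sum_{i=1}^{M-1}(\tau,f_i)$, and adjoining $g$ yields the desired decomposition into $M\le k-1$ non-negative $2$-flows (padding with the zero $2$-flow if $M<k-1$).

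To prove the peeling lemma I would pass to the multigraph $\widehat G$ obtained from $(G,\sigma)$ by replacing each edge $e$ by $f(e)$ parallel copies carrying a unit flow in the $\tau$-direction. Flow conservation forces $\sum_{e\ni v}f(e)$ to be even at every vertex, so $\widehat G$ is eulerian; the edge-boundary balance of Observation~\ref{OB: total defects} forces $\sum_{e\in E_N}f(e)$ to be even, so $\widehat G$ has an even number of negative edges; the all-ones flow is nowhere-zero on $\widehat G$, so $\widehat G$ is flow-admissible; and one checks that merging parallel copies cannot create a long barbell, so $\widehat G$ inherits the hypothesis. Theorem~\ref{TH: decomposition long BB free} then decomposes $\widehat G$ into balanced circuits and short barbells, each of which supports the all-ones flow and is therefore a non-negative $2$-flow. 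The fractional circulation $f/M$ is a feasible point of the polytope $\{g:\partial(\tau,g)=0,\ 0\le g\le 1,\ g|_U=1,\ \supp(g)\subseteq\supp(f)\}$, which certifies that the edges of $U$ can be covered simultaneously; the remaining task is to extract from these pieces one genuine integral $2$-flow realising that coverage.

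The main obstacle is exactly this integral extraction, that is, keeping the number of $2$-flows at $k-1$ rather than merely finite. In the unsigned case the incidence matrix is totally unimodular, the polytope above is integral, and the lemma is immediate; for signed graphs the unbalanced circuits destroy integrality and produce the very half-integral phenomenon that underlies the $\tfrac{1}{2q}$-normalisation of Section~\ref{C-flow}. The point of the no-long-barbell hypothesis is to confine this obstruction to \emph{short} barbells, which are themselves supports of non-negative $2$-flows (only long barbells force an honest $3$-flow). Concretely, each edge $e$ of $G$ lies in a clique of at most $f(e)\le k-1$ of the pieces produced by Theorem~\ref{TH: decomposition long BB free}, and I would seek a $(k-1)$-colouring of the pieces in which copies of a common edge receive distinct colours, each colour class then summing to a single non-negative $2$-flow. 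Showing that such a recolouring (equivalently, the integral rounding of $f/M$) always succeeds without manufacturing a forbidden long barbell is where I expect the real work to lie, since a union of cliques of size $k-1$ need not be $(k-1)$-chromatic in general and the signed structure must be exploited to rule out the bad configurations.
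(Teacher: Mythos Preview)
Your strategy of peeling off a $2$-flow that covers the maximum-value edges is natural, but the proposal leaves the central step---what you call the ``integral extraction''---entirely open, and you acknowledge this. That is a genuine gap, not a technicality: the peeling lemma is essentially equivalent to the theorem itself (the theorem applied to $f$ viewed as an $(M+1)$-flow immediately yields such a $g$, since each of the $M$ summands must take value $1$ on every edge with $f(e)=M$). Your proposed route through Theorem~\ref{TH: decomposition long BB free} faces two concrete obstacles. First, the decomposition of $\widehat G$ into balanced circuits and short barbells is purely structural and need not respect the orientation $\tau$: a piece $P$ carries \emph{some} $2$-flow, but under the given $\tau$ the all-ones function $1_P$ is generally not a flow (a balanced circuit is not automatically a directed circuit), so your colour classes do not project to non-negative $2$-flows under $\tau$ and their sum need not reproduce $f$. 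Second, even granting that, the colouring problem you isolate is---as you yourself note---not solvable in general, and the fractional witness $f/M$ does not help precisely because the relevant polytope fails to be integral on signed graphs; you offer no mechanism by which the no-long-barbell hypothesis repairs this.

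The paper's argument takes a different route and does not use Theorem~\ref{TH: decomposition long BB free} at all. It inducts on $k$ with an odd/even split. For odd $k$, Lemma~\ref{LE: balanced odd} shows that the edges of odd flow value support a $2$-flow $g$, and then $\frac{f+g}{2}$ and $\frac{f-g}{2}$ are each non-negative $(\frac{k-1}{2}+1)$-flows handled by induction. For even $k$ the paper does establish exactly your peeling lemma, but the engine is Theorem~\ref{TH: mod flow-odd}: one views $f$ modulo the odd number $k-1$ and converts it to an integer $(k-1)$-flow $g$ with $g\equiv f\pmod{k-1}$ on the same support minus the edges of value $k-1$; then $\frac{f-g}{k-1}$ is a non-negative $2$-flow covering $\{e:f(e)=k-1\}$, and $f-\frac{f-g}{k-1}$ is a non-negative $(k-1)$-flow to which induction applies. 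So the missing idea is the modulo-to-integer conversion for odd moduli, not a combinatorial colouring of the blown-up graph.
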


We need some lemmas to prove Theorem~\ref{TH: sum LBB-free}.
\begin{lemma}
\label{LE: even odd}
Let $(G,\sigma)$ be a signed graph and $(\tau, f)$ be a $k$-flow of $(G,\sigma)$.
Then the total number of negative edges with odd flow values is even.
\end{lemma}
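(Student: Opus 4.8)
The plan is to turn this into a parity statement about the whole boundary and then reduce everything modulo $2$. Since $f$ is integer-valued, it makes sense to speak of the negative edges whose flow value is odd; let $N$ denote their number. The quantity I want to get hold of is $\sum_{e\in E_N(G,\sigma)} f(e)$, because modulo $2$ this sum is congruent to $N$, each odd negative edge contributing $1$ and each even negative edge contributing $0$. So it suffices to show $\sum_{e\in E_N(G,\sigma)} f(e)\equiv 0 \pmod 2$.

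The key step is to sum the vertex boundaries over all of $V(G)$ and see how each edge contributes. For an edge $e=uv$ with half-edges $h_e^u,h_e^v$, its total contribution to $\sum_{v\in V(G)}\partial(\tau,f)(v)$ is $\bigl(\tau(h_e^u)+\tau(h_e^v)\bigr)f(e)$. By the orientation condition $\tau(h_e^u)\tau(h_e^v)=-\sigma(e)$, a positive edge has $\tau(h_e^u)+\tau(h_e^v)=0$ and so contributes nothing, whereas a negative edge has $\tau(h_e^u)=\tau(h_e^v)$ and contributes $\pm 2f(e)$. Since $(\tau,f)$ is a flow, every vertex boundary vanishes, and I obtain the integer identity
$$\sum_{e\in E_N(G,\sigma)} \varepsilon_e\, 2 f(e) = \sum_{v\in V(G)} \partial(\tau,f)(v) = 0,$$
where $\varepsilon_e=\pm 1$ records the common orientation of the two half-edges of $e$. (This is exactly the content of Observation~\ref{OB: total defects} specialized to a flow: the positive edges cancel in the signed vertex sum, and only negative edges survive with coefficient $\pm 2$.)

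Dividing by $2$ gives $\sum_{e\in E_N(G,\sigma)} \varepsilon_e f(e)=0$, and reducing this modulo $2$ (using $\varepsilon_e f(e)\equiv f(e)\pmod 2$) yields $\sum_{e\in E_N(G,\sigma)} f(e)\equiv 0 \pmod 2$, hence $N\equiv 0 \pmod 2$ as desired. I should stress that it is essential to perform the cancellation of positive edges \emph{over the integers} before reducing modulo $2$: a naive per-vertex reduction modulo $2$ double-counts every edge and collapses to a triviality, so the one genuine idea — and the only place any care is needed — is to exploit the integer-level sign cancellation of positive edges in the summed boundary and only then pass to parities. No long-barbell hypothesis is used, which is consistent with the lemma being stated for arbitrary signed graphs.
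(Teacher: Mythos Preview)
Your proof is correct and follows essentially the same route as the paper's: sum all boundaries (you sum vertex boundaries directly, the paper invokes Observation~\ref{OB: total defects} on edge boundaries, which amounts to the same identity with the opposite sign), observe that positive edges cancel over the integers while each negative edge contributes $\pm 2f(e)$, divide by $2$, and reduce modulo $2$. Your write-up is more explicit about the cancellation mechanism, but the argument is identical in substance.
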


\begin{proof} Denote $F = \{e\in E_N(G,\sigma) : f(e) ~ \mbox{is odd} \}$. By Observation \ref{OB: total defects},
$ \sum_{e \in E_N(G,\sigma)} (-2\tau(h))f(e) = 0$, and thus $ \sum_{e \in E_N(G,\sigma)} \tau(h)f(e) = 0$, where $h$ is a half-edge of $e$. Thus
$|F| \equiv \sum_{e \in F} \tau(h)f(e) \equiv 0 \pmod 2.$
\end{proof}

\begin{theorem}{\rm (Xu and Zhang~\cite{Xu2005})}
\label{TH: Xu-Zhang2}
A signed graph $(G,\sigma)$ admits a nowhere-zero $2$-flow if and only if each component of $(G,\sigma)$ is eulerian and has an even number of negative edges.
\end{theorem}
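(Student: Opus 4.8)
The plan is to prove both directions directly, reducing everything to the normalized case $f\equiv 1$ and arguing componentwise, since a nowhere-zero $2$-flow restricts to a nowhere-zero $2$-flow on each component and both conditions in the statement are phrased per component. First I would note that a nowhere-zero $2$-flow forces $|f(e)|=1$ for every edge, and that whenever $f(e)=-1$ I may reverse both half-edges of $e$ and replace $f(e)$ by $+1$: this preserves the orientation constraint $\tau(h_e^u)\tau(h_e^v)=-\sigma(e)$ (both signs flip) and leaves every boundary $\partial(\tau,f)(v)$ unchanged, since the product $\tau(h)f(e)$ is unaffected. Hence I may assume $f\equiv 1$ throughout, so that $\partial(\tau,f)(v)=\sum_{h\in H(v)}\tau(h)$ and the task reduces to assigning signs $\tau(h)\in\{\pm1\}$ to half-edges.

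For necessity, fix a component carrying such a flow with $f\equiv 1$. At each vertex $v$ the boundary is a sum of $d_G(v)$ terms, each $\pm1$, that vanishes; hence $d_G(v)$ is even and the component is eulerian. For the parity of the negative edges I would invoke Lemma~\ref{LE: even odd}: in any $k$-flow the number of negative edges with odd flow value is even, and here every edge has flow value $\pm1$, so the number of negative edges is even. (Equivalently, one can count half-edges oriented away from their endpoints: this total equals $|E|$ because each vertex has equally many outgoing and incoming half-edges, while each positive edge contributes exactly one and each negative edge contributes zero or two, forcing $|E_N|=|E|-|E_P|$ to be even.)

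For sufficiency I would exploit the eulerian structure explicitly. Assuming the component is eulerian with an even number of negative edges, fix an eulerian circuit $W=v_0e_1v_1\cdots e_mv_m$ with $v_m=v_0$, and define a running sign $s_i=\prod_{j=1}^{i}\sigma(e_j)$ with $s_0=+1$, so that $s_i=\sigma(e_i)s_{i-1}$. When $e_i$ is traversed from $v_{i-1}$ to $v_i$ I set $\tau(h_{e_i}^{v_{i-1}})=s_{i-1}$ and $\tau(h_{e_i}^{v_i})=-\sigma(e_i)s_{i-1}=-s_i$; the product of these two values is $-\sigma(e_i)$, so the orientation constraint holds, and since each edge occurs exactly once in $W$ every half-edge is assigned exactly once. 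The key computation is that at every internal visit to a vertex, where the circuit arrives along $e_i$ and departs along $e_{i+1}$, the two incident half-edges contribute $-s_i+s_i=0$ to the boundary. The only contributions that are not cancelled this way occur at $v_0=v_m$: the initial departure along $e_1$ contributes $s_0$ and the final arrival along $e_m$ contributes $-s_m$, for a total of $s_0-s_m$ at $v_0$.

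The main obstacle, and exactly the point where the hypothesis on negative edges is used, is closing this computation at the start vertex: one needs $s_m=s_0$, and indeed $s_m=\prod_{j=1}^{m}\sigma(e_j)=(-1)^{|E_N|}$, which equals $+1=s_0$ precisely because $|E_N|$ is even. Then the boundary vanishes at $v_0$ as well, so $(\tau,1)$ is a nowhere-zero $2$-flow of the component, and assembling these over all components yields the flow on $(G,\sigma)$. The verification I expect to require the most care is the bookkeeping that each vertex occurrence in $W$ is accounted for: for $v\neq v_0$ every visit is a clean arrive-then-depart pair contributing $0$, whereas at $v_0$ there are the internal pairs together with the single unpaired departure $h_{e_1}^{v_0}$ and arrival $h_{e_m}^{v_m}$, and one must confirm these exhaust all $d_G(v_0)$ half-edges at $v_0$ so that the boundary there is genuinely $s_0-s_m$.
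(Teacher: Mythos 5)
The paper does not actually prove this theorem---it is quoted as a known result of Xu and Zhang \cite{Xu2005}---so there is no in-paper proof to compare against; your blind proof must stand on its own, and it does. The normalization to $f\equiv 1$ is exactly the ``minusing'' operation of Definition~\ref{DEF: minus} with $k=2$: reversing both half-edges preserves $\tau(h_e^u)\tau(h_e^v)=-\sigma(e)$ and each product $\tau(h)f(e)$, as you say. Necessity is sound: with $f\equiv 1$ the vanishing boundary forces $|H(v)|=d_G(v)$ even (loops contribute two half-edges, matching the degree convention), and the per-component parity of $|E_N|$ follows either from Lemma~\ref{LE: even odd} applied to the component with its restricted flow, or from your outgoing-half-edge count, where a positive edge has exactly one outgoing half-edge and a negative edge zero or two. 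Sufficiency via the eulerian circuit with the running sign $s_i=\prod_{j=1}^{i}\sigma(e_j)$ is correct: the assignment $\tau(h_{e_i}^{v_{i-1}})=s_{i-1}$, $\tau(h_{e_i}^{v_i})=-s_i$ satisfies the orientation constraint, internal arrive--depart pairs cancel (this works verbatim for loops, whose two half-edges fall into consecutive cancelling pairs), and the hypothesis enters precisely at $s_m=(-1)^{|E_N|}=+1=s_0$, closing the boundary at $v_0$. Your final bookkeeping point is the right one to check, and it holds because the eulerian circuit uses each edge exactly once, so every half-edge at each vertex occurs in exactly one visit of $W$. This is essentially the standard argument behind the Xu--Zhang characterization, and I see no gap.
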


\begin{lemma}
\label{LE: balanced odd}
Let $(G,\sigma)$ be a signed graph without long barbells and $(\tau, f)$ be a $k$-flow of $(G,\sigma)$. Let $(Q,\sigma|_{E(Q)})$ be the subgraph of $(G,\sigma)$ induced by the edges of $\{ e : f(e) \equiv 1 \pmod{2}\}$. Then every component of $(Q,\sigma|_{E(Q)})$ has an even number of negative edges and thus $(Q,\sigma|_{E(Q)})$ admits a nowhere-zero $2$-flow.
\end{lemma}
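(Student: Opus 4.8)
The plan is to read off three facts about the odd-flow subgraph $(Q,\sigma|_{E(Q)})$ and then feed them into Theorem~\ref{TH: Xu-Zhang2}. Concretely, I want to show (i) every vertex of $Q$ has even degree, so each component of $Q$ is eulerian; (ii) each component of $Q$ has an even number of negative edges; and then conclude the nowhere-zero $2$-flow from Theorem~\ref{TH: Xu-Zhang2}.

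For (i) I would reduce the flow condition modulo $2$. At each vertex $v$ the boundary is $\partial(\tau,f)(v)=\sum_{e\ni v}\tau(h_e^v)f(e)=0$, and since $\tau(h_e^v)\equiv 1\pmod 2$ this gives $\sum_{e\ni v}f(e)\equiv 0\pmod 2$. Hence the number of edges incident with $v$ whose flow value is odd --- which is exactly the degree of $v$ in $Q$ --- is even. So every component of $Q$ is a connected graph with all degrees even, i.e.\ eulerian.

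Step (ii) is the crux, and I would argue by contradiction. Suppose some component $Q_1$ of $Q$ has an odd number of negative edges. Being eulerian, $Q_1$ decomposes into edge-disjoint circuits, and since the parity of the number of negative edges is additive over this decomposition, at least one of these circuits, say $C_1$, is unbalanced (this depends only on the signature restricted to the circuit, so $C_1$ is unbalanced in $(G,\sigma)$ as well). Let $G_0$ be the connected component of $G$ containing $Q_1$. The restriction of $(\tau,f)$ to $G_0$ is a $k$-flow of $(G_0,\sigma|_{E(G_0)})$, so Lemma~\ref{LE: even odd} shows that the number of negative edges of $Q$ lying in $G_0$ is even. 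As $Q_1$ already contributes an odd count, there must be a second component $Q_2\subseteq G_0$ of $Q$ with an odd number of negative edges, and by the same reasoning $Q_2$ contains an unbalanced circuit $C_2$. Now $C_1$ and $C_2$ are vertex-disjoint (they lie in distinct components of $Q$) and both lie in the connected graph $G_0$; joining them by a shortest path between their vertex sets --- a path meeting the two circuits only at its ends --- produces a long barbell, contradicting the hypothesis. Hence every component of $Q$ has an even number of negative edges.

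Finally, combining (i) and (ii), every component of $(Q,\sigma|_{E(Q)})$ is eulerian with an even number of negative edges, so Theorem~\ref{TH: Xu-Zhang2} yields a nowhere-zero $2$-flow, completing the proof. The main obstacle is exactly step (ii): Lemma~\ref{LE: even odd} only delivers that the \emph{total} number of negative odd-flow edges (in a component of $G$) is even, and upgrading this to an \emph{even count in each component of $Q$} is precisely where the no-long-barbell hypothesis is indispensable, since it forbids two vertex-disjoint unbalanced circuits inside a common component of $G$. The parity and eulerian bookkeeping in the remaining steps is routine.
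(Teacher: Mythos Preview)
Your proof is correct and follows essentially the same route as the paper: both establish that $Q$ is eulerian, invoke Lemma~\ref{LE: even odd} to get an even total of odd-flow negative edges, argue that a component of $Q$ with an odd negative-edge count must contain an unbalanced circuit, and then use the no-long-barbell hypothesis to rule out two vertex-disjoint unbalanced circuits. Your version is in fact a bit more careful than the paper's --- you explicitly restrict to a connected component $G_0$ of $G$ before building the long barbell, and you find the unbalanced circuit by direct circuit decomposition rather than quoting Theorem~\ref{TH: Xu-Zhang2} --- but these are presentational refinements, not a different argument.
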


\begin{proof}
Obviously, $(Q,\sigma|_{E(Q)})$ is an even subgraph of $(G,\sigma)$. By Lemma ~\ref{LE: even odd}, $(Q,\sigma|_{E(Q)})$ has an even number of negative edges and thus the number of components of $(Q,\sigma|_{E(Q)})$ with an odd number of negative edges is even. By Theorem~\ref{TH: Xu-Zhang2}, if a component of $(Q,\sigma|_{E(Q)})$ has an odd number of negative edges, then it is unbalanced.
Thus $(Q,\sigma|_{E(Q)})$ has an even number of unbalanced components. Since $(G,\sigma)$ contains no long barbells, $(Q,\sigma|_{E(Q)})$ doesn't contain two vertex-disjoint unbalanced circuits. Therefore, each component of $(Q,\sigma|_{E(Q)})$ is balanced and thus by Theorem~\ref{TH: Xu-Zhang2} again, it admits a nowhere-zero $2$-flow.
\end{proof}

Now we are ready to prove Theorem~\ref{TH: sum LBB-free}.

\medskip
\noindent
{\bf Proof of Theorem~\ref{TH: sum LBB-free}.}
Prove by induction on $k$. It is trivial if $k = 2$. Now assume that the theorem is true for all $t \leq k-1$.
Let $(\tau, f)$ be a non-negative $k$-flow of $(G,\sigma)$. For convenience, every flow is a flow of $(G,\sigma)$ under the orientation $\tau$ in the following.

We first consider the case when $k$ is odd. Let $(Q,\sigma|_{E(Q)})$ be the subgraph of $(G,\sigma)$ induced by the edges of $\{ e :  f(e) \equiv 1 \pmod{2}\}$. By Lemma~\ref{LE: balanced odd}, $(G,\sigma)$ admits a $2$-flow
$g$ with $\supp(g)=E(Q)$. Then each
$$ g_1~=~ \frac{f+g}{2}, ~~\mbox{and} ~~~ g_2~=~ \frac{f-g}{2}$$
is a non-negative $(\frac{k-1}{2}+1)$-flows. By induction hypothesis, each $g_i$ is the sum of $\frac{k-1}{2}$ non-negative $2$-flows. Thus $f = g_1 + g_2$ is the sum of $k-1$ non-negative $2$-flows.

Now assume that $k$ is even. Then $k-1$ is odd. Consider $f$ as a modulo $(k-1)$-flow. Then by Theorem~\ref{TH: mod flow-odd}, $(G,\tau)$ has a $(k-1)$-flow $g$ such that $f(e) \equiv g(e) \pmod k$ for each edge $e\in E(G)$ and $\supp(g) = \supp(f) \setminus \{e\in E(G): f(e) = k-1\}$. Since $1\leq f(e) \leq k-1$ and $-(k-2) \leq g(e) \leq k-2$, $(f-g)(e) = 0,$ or $k-1$ for every edge and $\{e\in E(G) : f(e) = k-1\} \subseteq \supp(f-g)$. Thus $f_1 = \frac{f-g}{k-1}$ is a non-negative $2$-flow with  $\{e\in E(G) : f(e) = k-1\} \subseteq \supp(f_1)$. Therefore $f-f_1$ is a non-negative $(k-1)$-flow. By induction hypothesis, $f-f_1$  is the sum of $k-2$ non-negative $2$-flows. Together with $f_1$, $f$ can be expressed as the sum of $k-1$ non-negative $2$-flows.
This completes the proof of the theorem.
\hfill $\Box$

\section{Integer and circular flow numbers}
\label{C-flow}

As mentioned in the introduction,  $\Phi_i(H)=\lceil\Phi_c(H)\rceil$  holds for each unsigned graph $H$ (Goddyn et al. \cite{Goddyn}) but there are
signed graphs with $\Phi_i(G,\sigma) - \Phi_c(G,\sigma) \geq 1$.
In this section we study the circular flow numbers of signed graphs and prove that signed graphs without long barbells behave like unsigned graphs in this context. 

\begin{figure}[ht]
\centering
	\includegraphics[height=5cm]{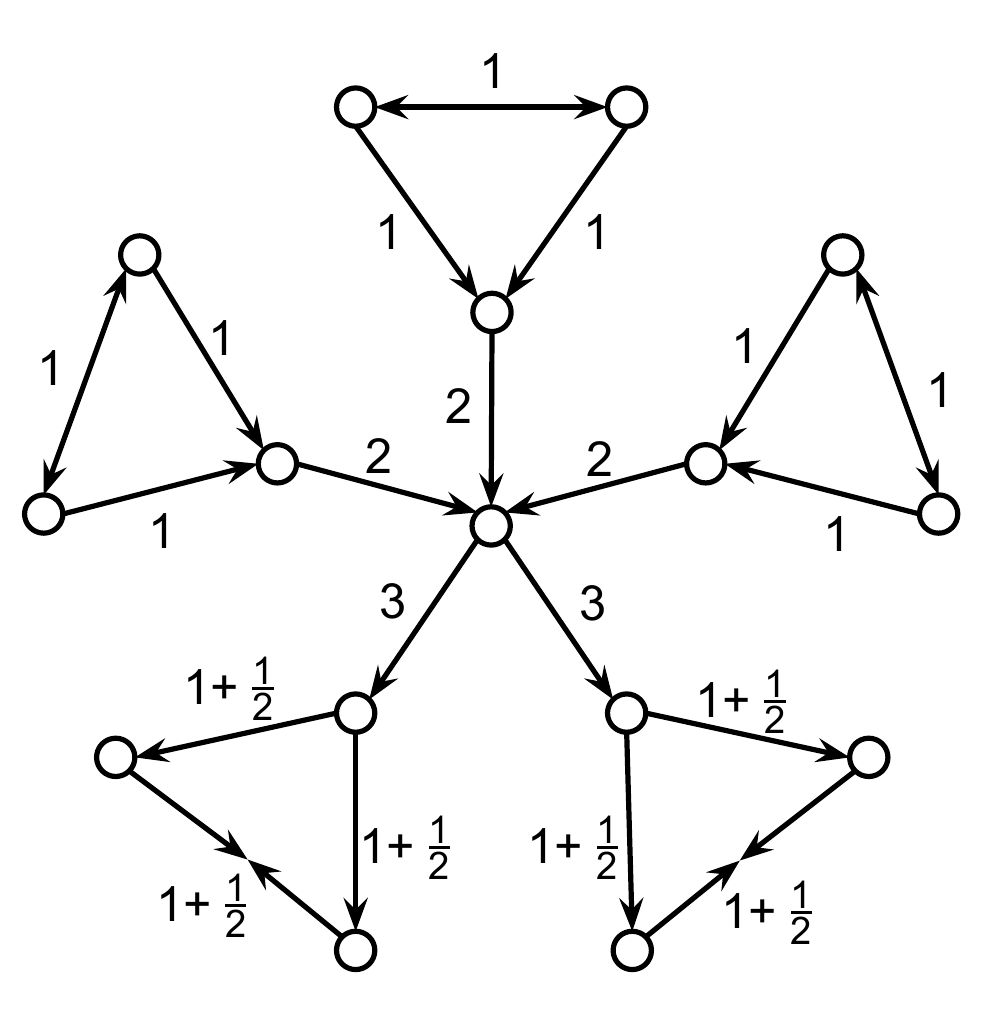}
\caption{A  nowhere-zero circular 4-flow of a graph $(G,\sigma)$ with $\Phi_i(G,\sigma) = 5$
\label{not_G1}}
 \end{figure}

Up to today, all examples with the property 
$\lceil\Phi_c(G,\sigma)\rceil < \Phi_i(G,\sigma)$ contain a star-cut.
A star-cut is an induced subgraph $S$ isormorphic to $K_{1,t}$ of $G$ such that every edge of $S$ is an edge-cut of $G$.
It becomes natural to ask whether for each $2$-edge-connected signed graph $(G,\sigma)$ the numbers $\lceil\Phi_c(G,\sigma)\rceil$ and $\Phi_i(G,\sigma)$ are same. We deny this question by giving an infinite family of counterexamples.

\begin{proposition}
Let $t$ be a positive integer  and  $G_t$ be the unsigned graph obtained by identifying $t$ copies of $K_4$ at a common edge $v_1v_2$. Let $(G,\sigma)$ be the signed graph obtained  from $G_t$  by deleting $v_1v_2$ and adding two negative loops $L_1, L_2$
at $v_1$ and $v_2$, respectively. Then $\Phi_c(G,\sigma) \leq 3$ and $\Phi_i(G,\sigma)\ge 4.$
\end{proposition}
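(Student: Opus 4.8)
The plan is to establish the two inequalities separately: first exhibit an explicit nowhere-zero circular $3$-flow to get $\Phi_c(G,\sigma)\le 3$ (which also certifies flow-admissibility, so that $\Phi_i$ is well-defined), and then rule out every nowhere-zero integer $3$-flow to get $\Phi_i(G,\sigma)\ge 4$. Throughout I will write $a_i,b_i$ for the two vertices of the $i$-th copy of $K_4$ other than $v_1,v_2$; after deleting $v_1v_2$ each copy becomes the $4$-cycle $v_1a_iv_2b_iv_1$ together with the chord $a_ib_i$, and $a_i,b_i$ have degree $3$ with all three incident edges lying inside that copy and positive.

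For the upper bound I would simply write down a circular $3$-flow. First I would route $3$ units from $v_1$ to $v_2$ through the first copy by setting $f(v_1a_1)=2$, $f(a_1b_1)=1$, $f(a_1v_2)=1$, $f(v_1b_1)=1$, $f(b_1v_2)=2$ (oriented as indicated); conservation at $a_1,b_1$ is immediate and the net flow leaving $v_1$ and entering $v_2$ is $3$. For every remaining copy I would use the balanced assignment $f(v_1a_i)=1$, $f(v_2a_i)=1$, $f(a_ib_i)=2$, $f(b_iv_1)=1$, $f(b_iv_2)=1$, which again conserves flow at $a_i,b_i$ but carries net flow $0$ across the copy. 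Finally I would put $f(L_1)=f(L_2)=\frac{3}{2}$, orienting both half-edges of $L_1$ toward $v_1$ and both half-edges of $L_2$ away from $v_2$, so that $L_1$ contributes $-3$ to $\partial(\tau,f)(v_1)$ and $L_2$ contributes $+3$ to $\partial(\tau,f)(v_2)$. These exactly cancel the $+3$ delivered to $v_1$ and the $-3$ delivered to $v_2$ by the first copy, so $\partial(\tau,f)\equiv 0$; as every value has absolute value in $[1,2]$, this is a circular $3$-flow.

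For the lower bound I would assume a nowhere-zero integer $3$-flow $(\tau,f)$ and reduce it modulo $3$, so that every $f(e)\in\{1,2\}=\{\pm1\}\pmod 3$. The crucial observation is that the net flow across each copy is divisible by $3$. Indeed, at the degree-$3$ vertex $a_i$, conservation forces the three incident values, measured into $a_i$, to sum to $0$ in $\mathbb{Z}_3$; since each lies in $\{\pm1\}$ and three elements of $\{\pm1\}$ can sum to $0\pmod 3$ only if all three are equal, the flows on $v_1a_i$, $v_2a_i$, $a_ib_i$ (all into $a_i$) are congruent mod $3$ to a common residue $\eta_a$. The same holds at $b_i$ with residue $\eta_b$, and since the value of $a_ib_i$ into $a_i$ is the negative of its value into $b_i$ we get $\eta_a\equiv-\eta_b\pmod 3$. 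Hence the net flow leaving $v_1$ through copy $i$ equals $\eta_a+\eta_b\equiv 0\pmod 3$. Summing over all copies, the total flow leaving $v_1$ through the copies is $\equiv 0\pmod 3$, so $\partial(\tau,f)(v_1)=0$ forces the loop contribution $\pm 2f(L_1)$ to be $\equiv 0\pmod 3$ as well. But $f(L_1)\in\{1,2\}$ gives $2f(L_1)\in\{2,4\}$, never divisible by $3$ — a contradiction. Thus $(G,\sigma)$ has no nowhere-zero $3$-flow (and a fortiori no nowhere-zero $2$-flow, since $a_i$ has odd degree), whence $\Phi_i(G,\sigma)\ge 4$.

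The main obstacle is the modulo-$3$ rigidity step showing that each $K_4$-gadget can transmit only a multiple of $3$ between $v_1$ and $v_2$; once this "net flow $\equiv 0\pmod 3$" fact is secured, the clash with the negative loops, which necessarily contribute an amount coprime to $3$ to the boundary, is routine. Conceptually, the loops demand a boundary supply of an amount not divisible by $3$ at $v_1$ (and at $v_2$) that an integer $3$-flow cannot deliver through the gadgets, while a fractional circular $3$-flow can, by splitting the required supply of $3$ evenly across the two half-edges of the loop at value $\frac{3}{2}$.
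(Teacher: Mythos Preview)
Your proof is correct and follows essentially the same approach as the paper. For the upper bound you write out explicitly the circular $3$-flow that the paper obtains by citing a nowhere-zero $4$-flow of $G_t$ with $f(v_1v_2)=3$ and replacing the deleted edge by the two loops at value $\tfrac{3}{2}$; for the lower bound you spell out in detail the ``degree-$3$ rigidity'' (three $\pm1$ values summing to $0$ in $\mathbb{Z}_3$ must coincide) that the paper asserts in one sentence when it says each copy of $K_4-v_1v_2$ contributes zero to $\partial(\tau,g)(v_i)$.
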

\begin{proof} Note that  it is easy to check that $G_t$ doesn't admit a nowhere-zero $3$-flow but admits a positive nowhere-zero $4$-flow $(D,f)$ with precisely one edge $v_1v_2$ with flow value $3$. 

We first claim that $(G,\sigma)$ admits a circular nowhere-zero $3$-flow.
 Assume  that $v_1v_2$ is oriented away from $v_1$ and toward $v_2$.  Orient $L_1$ away from $v_1$ and orient $L_2$ toward $v_2$ and define a mapping $\phi$ on $E(G)$ from $f$ by $\phi(e) = f(e)$ for each $e \notin \{L_1,L_2\}$ and $\phi(L_1) = \phi(L_2) = 1.5$. Then $\phi$ is  a  circular $3$-flow of $(G,\sigma)$, so $\Phi_c(G,\sigma)\leq 3$.

Now we claim that $(G,\sigma)$ does not
admit a nowhere-zero $3$-flow. Suppose to the contrary that $(G,\sigma)$ admits a nowhere-zero $3$-flow and thus admits a nowhere-zero $\mathbb{Z}_3$-flow $(\tau,g)$ such that $g(e)=1$ for every $e\in E(G)$.
Since every vertex in $V(G)\setminus \{v_1,v_2\}$ is of degree three in $G$, every copy of $K_4-v_1v_2$ contributes zero to $\partial (\tau,g)(v_i)$ for each $i \in \{1,2\}$. Thus $|\partial (\tau,g)(v_i)|=2|g(L_i)|\not \equiv 0 \pmod 3$, a contradiction.
\end{proof}

The following structural lemma  is needed in the proofs of Theorems~\ref{TH: 2k property} and \ref{LE: 2.4}.
Given a circular $(\frac{p}{q}+1)$-flow $(\tau,\psi)$ of a signed graph $(G,\sigma)$,  let
$F_{\psi} = \{e\in E(G)$ : $q\psi(e) \notin \mathbb{Z} \}$.

\begin{lemma}
\label{LE: 1:2 value}
  Let $(G,\sigma)$ be a signed graph admitting a circular $(\frac{p}{q}+1)$-flow and let $(\tau,\phi)$ be  a circular $(\frac{p}{q}+1)$-flow of $(G,\sigma)$  such that $F_{\phi}$ has minimum cardinality. If $F_{\phi} \not = \emptyset$, then 

  (1) the signed induced graph $(G[F_\phi],\sigma|_{F_{\phi}})$ consists of a set of
vertex-disjoint unbalanced circuits;

(2)
for every edge $e \in E(G)\setminus F_{\phi}$, $2q\phi(e)$ is an even integer, while
for every edge $e \in F_{\phi}$, $2q\phi(e)$ is an odd integer.

\end{lemma}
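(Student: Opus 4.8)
The plan is to track the flow values modulo the lattice $\tfrac1q\mathbb Z$ and to repeatedly ``round off'' fractional edges along signed circuits until only half-integral edges survive. First I would reduce $q\phi$ modulo $1$ to obtain a map $\bar\phi\colon E(G)\to\mathbb R/\mathbb Z$ given by $\bar\phi(e)=q\phi(e)\bmod 1$. Since $\tau(h)\in\{\pm1\}\subseteq\mathbb Z$, reducing the identity $\sum_{h\in H(v)}\tau(h)\,q\phi(e_h)=0$ modulo $1$ shows $\partial(\tau,\bar\phi)(v)=0$ for every $v$, so $(\tau,\bar\phi)$ is a flow with values in $\mathbb R/\mathbb Z$ whose support is exactly $F_\phi$. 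Writing $H=G[F_\phi]$, I observe that $H$ has minimum degree at least $2$: at a vertex meeting $F_\phi$ in a single edge $e$ the boundary condition reads $\tau(h_e^v)\bar\phi(e)=0$, forcing $\bar\phi(e)=0$, a contradiction.

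The engine of the proof is a rounding step. Suppose some subgraph $S\subseteq F_\phi$ carries a nowhere-zero integer-valued flow $(\tau,\theta)$ (for instance a balanced circuit, which admits a nowhere-zero $2$-flow by Theorem~\ref{TH: Xu-Zhang2}, or a short barbell, which being flow-admissible carries a nowhere-zero $3$-flow on all its edges). Consider the line $\phi_t=\phi+t\theta$ for $t\ge 0$. On each $e\in S$ the quantity $q\phi_t(e)=q\phi(e)+t\,q\theta(e)$ moves with nonzero integer slope $q\theta(e)$, so it meets $\mathbb Z$ at a positive time; let $t^{\ast}>0$ be the first time at which some $q\phi_{t^{\ast}}(e)$ becomes an integer. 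The crucial point is that the magnitude thresholds $1$ and $\tfrac pq$ of a circular $(\tfrac pq+1)$-flow correspond, after scaling by $q$, to the integers $q$ and $p$; hence an edge can leave the feasible band only when $q\phi_t(e)\in\{\pm q,\pm p\}\subseteq\mathbb Z$, which cannot happen strictly before $t^{\ast}$. Therefore $\phi_{t^{\ast}}$ is again a circular $(\tfrac pq+1)$-flow, while at least one edge of $S$ has turned integral and no edge outside $S$ changed status, so $F_{\phi_{t^{\ast}}}\subsetneq F_\phi$. This contradicts the minimality of $|F_\phi|$, and I conclude that $H$ contains neither a balanced circuit nor a short barbell.

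With these exclusions the structure of $H$ is forced. Every circuit of $H$ is unbalanced; moreover $H$ contains no theta subgraph, because the three circuits of a theta graph have numbers of negative edges summing to an even value and so cannot all be odd, which would produce a balanced circuit. Consequently every block of $H$ is either a bridge or a single unbalanced circuit. If a component had two circuit-blocks, choosing two of them joined by a path through bridges only would yield a short barbell (shared cut-vertex) or a long barbell (positive-length path); the former is excluded by the rounding step and the latter by hypothesis. Since $H$ has minimum degree at least $2$, each component therefore equals a single unbalanced circuit, which is part~(1).

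Part~(2) then follows by solving the conservation law on each such circuit. For $e\notin F_\phi$ we have $q\phi(e)\in\mathbb Z$, so $2q\phi(e)$ is an even integer. For $e\in F_\phi$, let $C=v_0e_0v_1\cdots e_{n-1}v_0$ be the unbalanced circuit of $H$ containing $e$. The relation $\tau(h_{e_{i-1}}^{v_i})\bar\phi(e_{i-1})+\tau(h_{e_i}^{v_i})\bar\phi(e_i)=0$ gives $\bar\phi(e_i)=\pm\bar\phi(e_{i-1})$, and going once around multiplies $\bar\phi(e_0)$ by $(-1)^n\prod_j\bigl(-\sigma(e_j)\bigr)=(-1)^{|E_N(C)|}=-1$ since $C$ is unbalanced. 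Hence $2\bar\phi(e_0)=0$, and as $\bar\phi$ is nowhere-zero on $C$ this forces $\bar\phi(e)=\tfrac12$ for every edge of $C$; that is $q\phi(e)\in\tfrac12+\mathbb Z$, so $2q\phi(e)$ is an odd integer. I expect the rounding step to be the delicate part: the whole argument hinges on the observation that the band endpoints $1$ and $\tfrac pq$ become the integers $q$ and $p$ after scaling, which is exactly what guarantees that advancing to the first grid time never pushes an edge out of range.
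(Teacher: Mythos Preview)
Your argument is essentially the same as the paper's --- shift $\phi$ along an integer flow supported on a signed circuit inside $F_\phi$ until some $q\phi(e)$ becomes integral, noting that the band endpoints $q$ and $p$ are themselves integers so feasibility is not violated --- and your treatment of part~(2) via the $\mathbb R/\mathbb Z$-valued flow $\bar\phi$ is a clean variant of the paper's boundary-sum computation. There is, however, one genuine slip.

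You write that if a component of $H$ contains two circuit-blocks, then the two unbalanced circuits together form either a short barbell, excluded by the rounding step, or a long barbell, excluded \emph{``by hypothesis.''} But Lemma~\ref{LE: 1:2 value} carries no hypothesis forbidding long barbells; it is stated for arbitrary signed graphs and is later applied in Theorem~\ref{TH: 2k property} in that generality. So as written your structural step has a hole precisely in the case where $H$ contains two vertex-disjoint unbalanced circuits. The fix is immediate and already implicit in your own setup: a long barbell is a signed circuit and hence carries a nowhere-zero integer $3$-flow (value $1$ on each unbalanced circuit, value $2$ on the connecting path), so your rounding step applies to it just as it does to balanced circuits and short barbells. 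Once you include long barbells among the subgraphs $S$ handled by the rounding step, your conclusion that $H$ contains no signed circuit whatsoever follows, and the rest of your block-tree analysis (no theta, leaf blocks are unbalanced circuits, at most one circuit block per component) goes through unchanged. The paper's version of this step is more compressed: it argues directly that $\Delta(G[F_\phi])\ge 3$ would force two distinct circuits in a single component, which in turn would produce some signed circuit, contradicting the rounding step.
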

\begin{proof}
Without loss of generality, we may assume  $\phi(e)>0$ for every edge $e\in E(G)$. 

\medskip
\textbf{I.} $(G[F_\phi],\sigma|_{F_{\phi}})$ contains no signed circuits.

Suppose to the contrary that $(G[F_\phi],\sigma|_{F_{\phi}})$ contains a signed circuit $C$.
Then $(G,\sigma)$ admits an integer $2$- or $3$-flow $(\tau,\phi_1)$ with $\supp(\phi_1) = E(C)$ (see \cite{Bouchet1983}).  Let $\epsilon= \min_{e \in E(C)} \min\{\frac{1}{\phi_1(e)}(\frac{p}{q} - \phi(e)),\frac{1}{\phi_1(e)}(\phi(e) - 1)\}$. 
Then both
 $(\tau,\phi + \epsilon \phi_2)$ and $(\tau,\phi - \epsilon \phi_2)$ are  circular
 $(\frac{p}{q}+1)$-flows and
 at least one of $F_{\phi + \epsilon \phi_2}$ and $F_{\phi - \epsilon \phi_2}$ is a proper subset of $F_{\phi}$, contradicting the choice of $\phi$.

\medskip
\textbf{II.} $G[F_\phi]$ is $2$-regular.

It is easy to see that the minimum degree $\delta(G[F_\phi])\ge 2$ since
$(\tau,q\phi)$ is a flow with integer value in $E(G)\setminus F_\phi$ and non-integer value only in $F_\phi$.

Suppose that $Q$ is a component of $G[F_\phi]$ with  maximum degree $\Delta(Q)\ge 3$.  Then $Q$ must contain at least two distinct circuits $C_1$ and $C_2$,  otherwise $Q$ itself
 is a circuit.  By {\textbf I}, both $C_1 $ and $ C_2$ are unbalanced.
 Hence, one may find either a balanced circuit or a short barbell if $C_1$ and $C_2$ intersect each other,
 or a long barbell if $C_1$ and $C_2$ are vertex-disjoint, contradicting  {\textbf I}.

\medskip
Obviously, (1) is a corollary of  {\textbf I} and \textbf{II}. To prove (2), let $e\in E(G)$.
Since  $q\phi(e)$ is not an integer  if and only if $e \in F_\phi$,  $2q\phi(e)$ is an even integer if $e\in E(G)\setminus F_{\phi}$. Assume  $e\in F_\phi$ below. By (1), let $C$ be the unbalanced circuit in $(G[F_\phi],\sigma|_{F_\phi})$ containing $e$. Without loss of generality, further assume that $e$ is the unique negative edge of $C$ after switching. 
Hence, by (1) again, $$|2q\phi(e)|\equiv |\sum_{v\in V(C)}\partial (\tau,q\phi)(v)|\equiv 0 \pmod 1.$$
Thus $2q\phi(e)$ is an odd integer since $q\phi(e)$ is not an integer. This completes the  proof of the lemma.
\end{proof}

\begin{definition}
\label{DEF: 1/k property}
Let $\mu$ be a positive integer. A signed graph $(G, \sigma)$ is {\em $\frac{1}{\mu q}$-flow-normalizable} if it admits a circular  $\frac{p}{q}$-flow with rational flow values in $\{ 1, 1+ \frac{1}{\mu q}, 1 + \frac{2}{\mu q}, \dots, \frac{p}{q}-1-\frac{1}{\mu q}, \frac{p}{q}-1 \}$ whenever it admits a circular  $\frac{p}{q}$-flow with real flow values in $[1, \frac{p}{q}-1]$. By ${\cal G}_\mu$ we denote the family of signed graphs which are $\frac{1}{\mu q}$-flow-normalizable.
\end{definition}

For unsigned graphs we have ${\cal G}_1 = {\cal G}_\mu = \{G: G \mbox{ is a bridgeless graph}\}$ for each $\mu \geq 2$ (see \cite{Steffen_2001}).  However, for general signed graphs this does not hold. As an example we refer to the graph depicted in Figure \ref{not_G1} with $\Phi_c(G,\sigma) = 4$ where it is easy to see that every circular 4-flow must contain an edge with flow value $1+\frac{1}{2}$.

 The following theorem is a direct corollary of Lemma~\ref{LE: 1:2 value}-(2) and the definition of ${\cal G}_2$.
\begin{theorem}
\label{TH: 2k property}
A signed graph $(G,\sigma)$ is flow-admissible if and only if $(G,\sigma) \in {\cal G}_2$.
\end{theorem}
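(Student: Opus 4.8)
The plan is to read the biconditional as a forward implication carrying all the content, with the converse reducing to the known characterization of flow-admissibility through circular flows. The forward direction asserts that a flow-admissible $(G,\sigma)$ lies in ${\cal G}_2$, i.e.\ that every circular $\frac{p}{q}$-flow with real values in $[1,\frac{p}{q}-1]$ can be replaced by one whose values are multiples of $\frac{1}{2q}$; this is exactly the normalization delivered by Lemma~\ref{LE: 1:2 value}(2), once the bookkeeping is set up correctly.

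First I would fix the reparametrization linking Definition~\ref{DEF: 1/k property} to Lemma~\ref{LE: 1:2 value}. A circular $\frac{p}{q}$-flow with all values in $[1,\frac{p}{q}-1]$ is precisely a circular $\big(\frac{p-q}{q}+1\big)$-flow, so it matches the hypothesis of Lemma~\ref{LE: 1:2 value} with the fraction $\frac{p-q}{q}$ of denominator $q$, and with $F_\phi=\{e:q\phi(e)\notin\mathbb{Z}\}$. Assuming such a flow exists at all, among all of them I would pick one $\phi$ minimizing $|F_\phi|$; this minimizer exists because $|F_\phi|$ is a nonnegative integer bounded by $|E(G)|$, so its infimum over the nonempty family is attained.

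Now apply Lemma~\ref{LE: 1:2 value}(2): for every $e\in E(G)\setminus F_\phi$ the value $2q\phi(e)$ is an even integer and for every $e\in F_\phi$ it is an odd integer; in either case $2q\phi(e)\in\mathbb{Z}$, so each $\phi(e)$ is a multiple of $\frac{1}{2q}$ lying in $[1,\frac{p}{q}-1]$. Since the endpoints $1=\frac{2q}{2q}$ and $\frac{p}{q}-1=\frac{2(p-q)}{2q}$ are themselves multiples of $\frac{1}{2q}$, all values fall in the prescribed set $\{1,1+\frac{1}{2q},\dots,\frac{p}{q}-1\}$. As the same argument applies to every $\frac{p}{q}$ for which a real-valued circular flow exists, $(G,\sigma)$ is $\frac{1}{2q}$-flow-normalizable, i.e.\ $(G,\sigma)\in{\cal G}_2$.

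For the converse I would use that a signed graph admits a circular $r$-flow for some $r$ if and only if it is flow-admissible: a circular flow is nowhere-zero and, after scaling to rational values and clearing denominators, yields a nowhere-zero integer flow, while Raspaud and Zhu guarantee a circular flow for every flow-admissible graph. The main point to handle carefully is the vacuous case: a graph admitting no circular flow satisfies the conditional in Definition~\ref{DEF: 1/k property} trivially, so to make the biconditional exact one must read ${\cal G}_2$ as consisting of graphs that actually admit some circular flow, under which reading membership in ${\cal G}_2$ forces flow-admissibility. I expect this vacuity issue, together with keeping the two denominators in Definition~\ref{DEF: 1/k property} and Lemma~\ref{LE: 1:2 value} aligned through the $\frac{p}{q}\leftrightarrow\frac{p-q}{q}+1$ shift, to be the only real subtleties; the numerical heart of the statement is already contained in Lemma~\ref{LE: 1:2 value}(2).
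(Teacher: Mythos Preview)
Your proposal is correct and follows precisely the route the paper intends: the paper states the theorem as ``a direct corollary of Lemma~\ref{LE: 1:2 value}-(2) and the definition of ${\cal G}_2$'' without further argument, and your write-up simply unpacks that corollary---choose a circular $(\frac{p}{q}+1)$-flow minimizing $|F_\phi|$, and read off from Lemma~\ref{LE: 1:2 value}(2) (or trivially, if $F_\phi=\emptyset$) that every $2q\phi(e)$ is an integer. Your careful handling of the $\frac{p}{q}\leftrightarrow\frac{p-q}{q}+1$ shift and your observation about the vacuous case in Definition~\ref{DEF: 1/k property} are both legitimate bookkeeping points that the paper glosses over; they do not change the argument, and the mathematical content of your proof matches the paper's intended one exactly.
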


The following lemma gives some sufficient conditions for $\lceil\Phi_c(G,\sigma)\rceil = \Phi_i(G,\sigma)$.

\begin{lemma}
\label{LE: G1}
Let $(G,\sigma) \in {\cal G}_1$. Then $\lceil\Phi_c(G,\sigma)\rceil = \Phi_i(G,\sigma)$.
\end{lemma}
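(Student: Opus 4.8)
The plan is to prove the equality by establishing the two inequalities $\lceil\Phi_c(G,\sigma)\rceil \leq \Phi_i(G,\sigma)$ and $\Phi_i(G,\sigma) \leq \lceil\Phi_c(G,\sigma)\rceil$ separately. The first is routine, while the second is precisely where the hypothesis $(G,\sigma)\in{\cal G}_1$ does all of the work.

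For the first inequality I would note that a nowhere-zero integer $k$-flow is in particular a circular $k$-flow, because $f(e)\in\mathbb{Z}$ together with $1\leq|f(e)|\leq k-1$ certainly satisfies the real-valued constraint $1\leq|f(e)|\leq k-1$. Applying this with $k=\Phi_i(G,\sigma)$ gives $\Phi_c(G,\sigma)\leq\Phi_i(G,\sigma)$, and since $\Phi_i(G,\sigma)$ is an integer we may take ceilings to obtain $\lceil\Phi_c(G,\sigma)\rceil\leq\Phi_i(G,\sigma)$.

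For the reverse inequality, put $k=\lceil\Phi_c(G,\sigma)\rceil$. By the result of Raspaud and Zhu recalled earlier, $\Phi_c(G,\sigma)$ is rational and the infimum defining it is attained, so $(G,\sigma)$ admits a circular $\Phi_c(G,\sigma)$-flow. Since $\Phi_c(G,\sigma)\leq k$, we have $[1,\Phi_c(G,\sigma)-1]\subseteq[1,k-1]$, so that very flow is also a circular $k$-flow; equivalently, $(G,\sigma)$ admits a circular $\frac{p}{q}$-flow with $p=k$ and $q=1$. I then invoke the assumption $(G,\sigma)\in{\cal G}_1$: specializing Definition~\ref{DEF: 1/k property} to $\mu=1$ and $q=1$, the $\frac{1}{\mu q}$-flow-normalizability of $(G,\sigma)$ yields a circular $k$-flow all of whose values lie in $\{1,2,\dots,k-1\}$. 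Such a flow is integer-valued and nowhere zero, hence a nowhere-zero $k$-flow, so $\Phi_i(G,\sigma)\leq k=\lceil\Phi_c(G,\sigma)\rceil$. Together with the first inequality this proves the lemma.

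The step I would treat most carefully is the passage from a circular $k$-flow to an integer $k$-flow, which hinges on two small but essential points: that the minimum in the definition of $\Phi_c(G,\sigma)$ is genuinely attained, so that a circular $\Phi_c(G,\sigma)$-flow, and thus a circular $k$-flow, really exists; and that $k=\frac{k}{1}$ is an admissible choice of denominator $q=1$ to feed into Definition~\ref{DEF: 1/k property}, so that the normalized value set $\{1,1+\frac{1}{\mu q},\dots,\frac{p}{q}-1\}$ collapses to the integers $\{1,2,\dots,k-1\}$. Beyond these bookkeeping points there is no genuine combinatorial obstacle inside this lemma itself: all of the real difficulty has been pushed into verifying the membership $(G,\sigma)\in{\cal G}_1$, which is the task of the surrounding results rather than of this statement.
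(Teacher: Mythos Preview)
Your proof is correct and follows essentially the same approach as the paper's: both arguments set $k=\lceil\Phi_c(G,\sigma)\rceil$, view a circular $\Phi_c(G,\sigma)$-flow as a circular $\frac{k}{1}$-flow, and invoke Definition~\ref{DEF: 1/k property} with $\mu=q=1$ to normalize it to integer values in $\{1,\dots,k-1\}$. You are simply more explicit than the paper about the trivial inequality $\lceil\Phi_c(G,\sigma)\rceil\leq\Phi_i(G,\sigma)$ and about the use of Raspaud--Zhu to ensure the infimum is attained.
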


\begin{proof}
Let $(G, \sigma) \in {\cal G}_1$ with a circular  $\frac{p}{q}$-flow $(\tau,f)$. Let $k=\lceil\frac{p}{q}\rceil$. Since $(\tau,f)$ can also be considered as a circular $\frac{k}{1}$-flow, by Definition~\ref{DEF: 1/k property}, $(G, \sigma)$ admits a circular  $\frac{k}{1}$-flow $(\tau,f')$ with rational flow values in  $\{ 1, 1+\frac{1}{1}, 1+\frac{2}{1}, \dots, k-1-\frac{1}{1}, k-1\}$. Obviously, $(\tau,f')$ is a nowhere-zero $k$-flow.
\end{proof}

\begin{theorem}
\label{LE: 2.4}
Let $(G, \sigma)$ be a signed graph containing no long barbells. Then $(G, \sigma) \in {\cal G}_1$ and  thus 
$\lceil\Phi_c(G,\sigma)\rceil = \Phi_i(G,\sigma)$.
\end{theorem}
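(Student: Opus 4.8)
The plan is to show that every signed graph $(G,\sigma)$ without long barbells lies in ${\cal G}_1$; the conclusion $\lceil\Phi_c(G,\sigma)\rceil = \Phi_i(G,\sigma)$ then follows immediately from Lemma~\ref{LE: G1}. So the heart of the matter is the normalization statement: given a circular $\frac{p}{q}$-flow with arbitrary real values in $[1,\frac{p}{q}-1]$, I must produce a circular $\frac{p}{q}$-flow whose values all lie in the discrete set $\{1, 1+\frac{1}{q}, \dots, \frac{p}{q}-1\}$, i.e.\ every value is a multiple of $\frac{1}{q}$.

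First I would invoke Theorem~\ref{TH: 2k property} (equivalently Lemma~\ref{LE: 1:2 value}) to reduce from arbitrary real values to the half-integer-denominator case: since $(G,\sigma)$ is flow-admissible it lies in ${\cal G}_2$, so I may start from a circular $\frac{p}{q}$-flow $(\tau,\phi)$ in which every $2q\phi(e)$ is an integer, and moreover I would choose $\phi$ so that the set $F_{\phi} = \{e : q\phi(e)\notin\mathbb{Z}\}$ has minimum cardinality. If $F_{\phi}=\emptyset$ we are already done, so assume $F_{\phi}\neq\emptyset$. By Lemma~\ref{LE: 1:2 value}(1), the subgraph $(G[F_\phi],\sigma|_{F_\phi})$ is a disjoint union of unbalanced circuits, and here is where the long-barbell-free hypothesis is essential: by Lemma~\ref{LE: bridgeless} two vertex-disjoint unbalanced circuits would force a long barbell, so $F_\phi$ induces \emph{exactly one} unbalanced circuit $C$ (any two of them would have to intersect, but disjoint components of $G[F_\phi]$ cannot intersect). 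This single-circuit structure is what distinguishes signed graphs without long barbells from ${\cal G}_2$ in general, and it is the key that lets us push the denominator from $2q$ down to $q$.

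The main work is then to eliminate the last unbalanced circuit $C$ on which $\phi$ takes odd multiples of $\frac{1}{2q}$. The idea is to add a multiple of the integer $2$-flow supported on $C$: since $C$ is an unbalanced circuit, $(G,\sigma)$ admits an integer flow $(\tau,\phi_C)$ with $\supp(\phi_C)=E(C)$ taking values in $\{\pm1,\pm2\}$ (Bouchet), and I would perturb $\phi$ to $\phi \pm \epsilon\phi_C$ for a suitable $\epsilon$ chosen so that (i) the perturbed flow is still a circular $\frac{p}{q}$-flow (all values stay in $[1,\frac{p}{q}-1]$) and (ii) at least one edge of $C$ slides from a half-integer value onto a multiple of $\frac{1}{q}$, strictly decreasing $|F_\phi|$. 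The subtlety is that since $C$ is the \emph{unique} non-integral component, adjusting along it cannot create new fractional edges outside $C$ --- the boundary conditions at the vertices of $C$ stay consistent because the $2$-flow $\phi_C$ has zero boundary. Choosing $\epsilon$ as the largest value keeping all edges of $C$ in range, one edge hits an endpoint $1$ or $\frac{p}{q}-1$ (an integer after scaling) or otherwise becomes a multiple of $\frac1q$, contradicting the minimality of $|F_\phi|$.

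The hard part will be verifying cleanly that such a perturbation exists and genuinely reduces $|F_\phi|$ without side effects: I need to confirm that the increment $\epsilon\phi_C(e)$ moves each odd value $\frac{\text{odd}}{2q}$ onto the grid $\frac1q\mathbb{Z}$ for the right choice of sign and $\epsilon$, and that rounding one edge to the grid does not inadvertently knock another edge off it or out of the interval $[1,\frac{p}{q}-1]$. Because $G[F_\phi]$ is a single circuit with all edges carrying the same "parity'' of fractional part (each $2q\phi(e)$ odd, by Lemma~\ref{LE: 1:2 value}(2)), a uniform $\epsilon$ tied to the $\pm1,\pm2$ pattern of $\phi_C$ should shift them coherently; the extremal choice of $\epsilon$ then forces the desired reduction. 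Once $F_\phi$ is emptied by iterating this, every flow value is a multiple of $\frac1q$, which is exactly the defining property of ${\cal G}_1$, and the theorem follows.
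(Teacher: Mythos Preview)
Your reduction to a single unbalanced circuit $C=G[F_\phi]$ is correct and matches the paper. The gap is in the next step: there is \emph{no} nonzero flow $(\tau,\phi_C)$ with $\supp(\phi_C)=E(C)$ when $C$ is a single unbalanced circuit. An unbalanced circuit is not a signed circuit in Bouchet's sense; it is equivalent to a circuit with exactly one negative edge and hence, by Proposition~\ref{flow admissible}, is not flow-admissible at all. Concretely, chasing the boundary condition around $C$ forces $f(e)=-f(e)$ on every edge, so the only flow supported on $C$ is the zero flow, even over $\mathbb{R}$. This is exactly why Step~\textbf{I} of the proof of Lemma~\ref{LE: 1:2 value} eliminates only signed circuits and leaves behind vertex-disjoint unbalanced circuits as the irreducible residue; your perturbation argument is essentially re-running that step and cannot make further progress.

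The paper closes the argument by a parity contradiction rather than a perturbation. After switching so that the single unbalanced circuit $C$ has exactly one negative edge $e_0$, one applies Observation~\ref{OB: total defects} to $(\tau,q\phi)$: the total edge-boundary is $0$, only negative edges contribute, and each contributes $\pm 2q\phi(e)$. By Lemma~\ref{LE: 1:2 value}(2) every negative edge outside $F_\phi$ contributes an even integer, while $e_0$ (the unique negative edge in $F_\phi$) contributes the odd integer $\pm 2q\phi(e_0)$. Thus $0\equiv 1\pmod 2$, a contradiction, so in fact $F_\phi=\emptyset$ and $(G,\sigma)\in{\cal G}_1$.
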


\begin{proof}
Suppose that $(G,\sigma)$ admits a circular $(\frac{p}{q}+1)$-flow. Without loss of generality, assume that $G$ is connected. We choose a circular $(\frac{p}{q} +1)$-flow $(\tau,\phi)$ of $(G,\sigma)$ such that $F_{\phi}=\{e\in E(G) : q\phi(e)\notin \mathbb{Z}\}$ has minimum cardinality. If $F_{\phi} = \emptyset$, then  $(G, \sigma) \in {\cal G}_1$ by the definition of  ${\cal G}_1$. 

Now assume $F_{\phi} \not = \emptyset$. Then by Lemma~\ref{LE: 1:2 value}-(1), $G[F_{\phi}]$ consists of a set of vertex-disjoint unbalanced circuits. Since $G$ is connected and $(G,\sigma)$ has no long barbells, $(G,\sigma)$ doesn't contain  two vertex-disjoint unbalanced circuits. Thus  $(G[F_{\phi}],\sigma|_{F_\phi})$ is an unbalanced circuit.
By switching, we may assume that $G[F_{\phi}]$  is an unbalanced circuit with precisely one negative edge, denoted by $e_0$.

Since $(\tau,\phi)$ is a circular flow of $(G,\sigma)$, so does $(\tau,q\phi)$. By Observation~\ref{OB: total defects}, the total sum of the boundaries on $E(G)$ is zero for $(\tau,q\phi)$. By Lemma~\ref{LE: 1:2 value}-(2), 
$$0=\sum_{e\in E(G)}\partial (\tau,q\phi)(e) \equiv \sum_{e \in E_N(G,\sigma)\cap F_{\phi}} 2q \phi(e) \equiv 2q\phi(e_0)\equiv 1 \pmod 2.$$
This contradiction completes the proof of  the theorem.
\end{proof}

\medskip \noindent
{\bf Acknowledgement.}  We thank Prof. Jiaao Li  for providing an example to show that Theorem~\ref{TH: mod flow}  doesn't hold for $k = 4$.



\begin{thebibliography}{s2}

\bibitem{Bouchet1983}
A. Bouchet, Nowhere-zero integral flows on bidirected graph, \JCTB, 34 (1983) 279-292.


\bibitem{Goddyn} L.A. Goddyn, M. Tarsi and C.-Q. Zhang, On $(k,d)$-colorings and fractional nowhere zero flows,
\JGT, 28 (1998) 155-161.
  
  
 \bibitem{lixw} L. Hu and X. Li,   Nowhere-Zero Flows on Signed Wheels and Signed Fans,  Bull. Malays. Math. Sci. Soc. 41 (2018) 1697?1709.
   
\bibitem{Jaeger1979} F. Jaeger, Flows and generalized coloring theorems in graphs, \JCTB, 26 (1979) 205-216.

\bibitem{Kaiser2016}
T. Kaiser, E. Rollov\'{a}, Nowhere-zero flows in signed series-parallel graphs, {\SIAMDM} 30(2) (2016) 1248--1258.



\bibitem{Khelladi87} A. Khelladi, Nowhere-zero integral chains and flows in bidirected graphs, \JCTB, 43 (1987) 95-115.

\bibitem{Little1988}
 H. C. Little, W. T. Tutte and D. H. Younger, A theorem on
integer flows, \ARS, 26A (1988) 109-112.

\bibitem{LuLuoZhang}
Y. Lu, R. Luo and C.-Q. Zhang, Multiple weak $2$-linkage and its applications on integer flows on signed graphs, \EJC, {69} (2018) 36-48.

\bibitem{Lovasz2013}
L. Lov\'asz, C. Thomassen, Y.Z Wu and C.-Q. Zhang, Nowhere-zero $3$-flows and modulo $k$-orientations, \JCTB, 103 (5) (2013) 587-598.

\bibitem{MS2015}{E.~Ma\v{c}ajova and M. \v Skoviera},  Remarks on nowhere-zero flows in signed cubic graphs, \DM, 338 (2015) 809-815.

\bibitem{MaRo15}
E. M\'{a}\v{c}ajov\'{a}, E. Rollov\'{a}, Nowhere-zero flows on signed complete and complete bipartite graphs,
{\JGT} 78 (2015) 108--130.

\bibitem{maca_stef}{E.~Ma\v{c}ajova and E.~Steffen}, The difference between the circular and the integer flow number of bidirected graphs, \DM, 7 (2015) 866-867.

\bibitem{Raspaud_Zhu_2011}{A.~Raspaud and X.~Zhu}, Circular flow on signed graphs, \JCTB, 101 (2011) 464-479.

\bibitem{RSS2018} E. Rollov\'{a}, M.~Schubert, E.~Steffen, Signed graphs with two negative edges,
					Electronic J. of Combinatorics 25(2) (2018) \#P2.40

\bibitem{signed_regular_graphs}{M.~Schubert and E.~Steffen}, Nowhere-zero flows on signed regular graphs, \EJC, 48 (2015) 34-47.

\bibitem{Seymour1981} P.D. Seymour, Nowhere-zero 6-flows, \JCTB, 30 (1981) 130-135.

\bibitem{Steffen_2001} E.~Steffen, Circular flow numbers of regular multigraphs, \JGT, 36 (2001) 24-34.

\bibitem{Thomassen2012}
C.~Thomassen, The weak $3$-flow conjecture and the weak circular flow conjecture,
\JCTB, 102 (2012) 521-529.

\bibitem{Tutte1947}
W.T. Tutte, The factorization of linear graphs, \JLMS,~22 (1947) 107-111.

\bibitem{Tutte54}
W.T. Tutte, A contribution to the theory of chromatic polynomials, \CJM,~6 (1954) 80-91.

\bibitem{2-negative-edges} X. Wang, Y. Lu, C.-Q. Zhang and S.G. Zhang, Six-flows on almost balanced signed graphs, preprint

\bibitem{Xu2005}
R. Xu and C.-Q. Zhang,  On flows of bidirected graphs,
 \DM, 299 (2005) 335-343.

\bibitem{Younger83} D.H. Younger, Integer flows, \JGT, 7 (1983) 349-357.


\end{thebibliography}
 \end{document}